\numberwithin{equation}{section}
\newtheorem{theorem}{Theorem}
\newtheorem{lemma}{Lemma}[section]
\newtheorem{remark}[lemma]{Remark}
\newtheorem{proposition}{Proposition}
\newtheorem{notation}[lemma]{Notation}
\newtheorem{calcul}[lemma]{Calculation}
\renewcommand{\leq}{\leqslant}
\renewcommand{\geq}{\geqslant}
\newcommand{\Rot}{\mathcal{R}}
\newcommand*{\IfItalic}{%
  \ifx\f@shape\my@test@it
    \expandafter\@firstoftwo
  \else
    \expandafter\@secondoftwo
  \fi
}
\newcommand*{\my@test@it}{it}
\newcommand{\myae}{\IfItalic{\emph{\mbox{\ae}}}{\mbox{\ae}}}
\newcommand\XLast{}
\newcommand\YLast{}
\newcommand\Vidr[4]{
\qbezier(#1,#2)(#1,#2)(#3,#4)
\renewcommand\XLast{#3}
\renewcommand\YLast{#4}}
\newcommand\VidrTo[2]{\Vidr{\XLast}{\YLast}{#1}{#2}}
\newcommand{\ex}[1]{\mathsf{E}\left[\,#1\,\right]}
\newcommand{\ind}[1]{\mathbbm{1}_{#1}}
\begin{document}

\begin{center}
{\Large  The derivative of the conjugacy\\ for the pair of
tent-like maps from an interval into itself}

{\large Makar Plakhotnyk\\
University of S\~ao Paulo, Brazil.\\

makar.plakhotnyk@gmail.com}\\
\end{center}

\begin{abstract}
We consider in this article the properties of the topological
conjugacy of the piecewise linear unimodal maps $g:\, [0,\,
1]\rightarrow [0,\, 1]$, all whose kinks belong to the complete
pre-image of $0$. We call such maps firm carcass maps. We prove
that every firm carcass maps $g_1$ and $g_2$ are topologically
conjugated. For the conjugacy $h$ such that $h\circ g_1 = g_2\circ
h$ we denote $\{ h_n, n\geq 1\}$ the piecewise linear
approximations of $h$, whose graphs connect the points $\{ (x,
h(x)),\ g_1^n(x)=0\}$. For any $x\in [0,\, 1]$ we reduce the
question about the value of $h'(x)$ to the properties of the
sequence $\{h_n'(x),\, n\geq 1\}$. We prove that each conjugacy of
firm carcass maps either has the length 2, or is piecewise
linear~\footnote{This work is partially supported by FAPESP (S\~ao
Paulo, Brazil).}~\footnote{AMS subject classification:
37E05  
}.
\end{abstract}

\section{Introduction}

The topological conjugation is a powerful tool for the
investigation of properties of one-di\-men\-sio\-nal dynamical
systems. The classical example in the pedagogic of one-dimensional
dynamical systems is the conjugacy of the maps
\begin{equation}\label{eq:1.1} x \mapsto 2x-|1-2x|
\end{equation} and $ f:\, x\mapsto 4x(1-x) $, which was stated at first
in~\cite{Ulam-1964-a}. Due to the form of the graph, the
map~\eqref{eq:1.1} is called \underline{\emph{tent map}}. The
mentioned example inspirits the desire to describe all the
functions, which are topologically conjugated to the tent map.
This description is also given in~\cite{Ulam-1964-a}. By the
definition of the topological conjugation, for the increasing
conjugacy $h:\, [0,\, 1]\rightarrow [0,\, 1]$, the map $g = h\circ
f\circ h^{-1}$ appears to be of the form
\begin{equation}\label{eq:1.2} g(x) = \left\{
\begin{array}{ll}g_l(x),& 0\leq x\leq
v,\\
g_r(x), & v\leq x\leq 1,
\end{array}\right.
\end{equation} where %
$v\in (0,\, 1)$, the function $g_l$ increase, the function $g_r$
decrease, and $$g(0)=g(1)=1-g(v)=0.$$

We will call \underline{\emph{unimodal map}} the mentioned $g$.

\begin{theorem}\label{th:1}\cite[p. 53]{Ulam-1964-a} %
The tent map~\eqref{eq:1.1} is topologically conjugated to the
unimodal map $g$ if and only if the complete pre-image of $0$
under the action of $g$ is dense in $[0,\, 1]$.
\end{theorem}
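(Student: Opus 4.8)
The plan is to recover a conjugacy from the common combinatorial skeleton of the two maps --- the nested partitions cut out by the iterated pre-images of $0$ --- with the density hypothesis supplying precisely the fact that this skeleton pins down a point map and not merely a correspondence between intervals. For the (easy) necessity, suppose $h$ is an increasing homeomorphism of $[0,\,1]$ with $h\circ T = g\circ h$, where $T$ is the tent map~\eqref{eq:1.1}. As a relation on sets this gives $h(T^{-1}(A))=g^{-1}(h(A))$; since an increasing self-homeomorphism of $[0,\,1]$ fixes the endpoints we have $h(0)=0$, and induction on $n$ gives $h(T^{-n}(0))=g^{-n}(0)$. Thus $h$ carries the complete pre-image of $0$ under $T$ --- the dense set of dyadic rationals --- bijectively onto the complete pre-image of $0$ under $g$, and a homeomorphism preserves density, so the latter is dense.

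For sufficiency, both maps are unimodal with each monotone branch mapping onto all of $[0,\,1]$, so I would introduce for $g$ the two inverse branches $\beta_0,\beta_1\colon[0,\,1]\to[0,\,1]$ ($\beta_0$ increasing onto $[0,\,v]$, $\beta_1$ decreasing onto $[v,\,1]$) and the analogous $\alpha_0,\alpha_1$ for $T$. To a word $w=w_1\dots w_n\in\{0,1\}^n$ attach the compositions $\beta_w=\beta_{w_1}\circ\dots\circ\beta_{w_n}$, $\alpha_w$, and the cylinders $I_w=\beta_w([0,\,1])$, $J_w=\alpha_w([0,\,1])$. For fixed $n$ these cut $[0,\,1]$ into $2^n$ intervals, the partitions refine as $n$ grows, and a short check (using $g(v)=1$ and $g(1)=0$, whence $v\in g^{-2}(0)$) shows that the set of all cylinder endpoints is exactly the complete pre-image of $0$. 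The decisive step is the equivalence: the complete pre-image of $0$ under $g$ is dense if and only if the mesh $\max_{|w|=n}|I_w|$ tends to $0$. Indeed, for an infinite word $\omega$ the nested intersection $\bigcap_n I_{\omega_1\dots\omega_n}$ is either a point or a nondegenerate interval; in the second case its interior meets no cylinder endpoint, hence no pre-image of $0$, contradicting density, while conversely any interval free of pre-images of $0$ is trapped in a single cylinder at every level. For $T$ the mesh tends to $0$ automatically, the $J_w$ being dyadic intervals of length $2^{-n}$.

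Granted both meshes vanish, I would define $h$ through the correspondence $h(J_w)=I_w$: writing $x=\bigcap_n J_{\omega_1\dots\omega_n}$ for a suitable $\omega$, set $h(x)=\bigcap_n I_{\omega_1\dots\omega_n}$, a single point by the mesh condition. Order is preserved because $T$ and $g$ share the orientation pattern (left branch increasing, right branch decreasing), so their cylinders are linearly ordered in the same way; the only points with two word representations are the countably many cylinder endpoints, at which the two candidate values coincide, so $h$ is a well-defined increasing bijection of $[0,\,1]$, continuous in both directions by the vanishing meshes. The conjugacy is then automatic: from $h\circ\alpha_i=\beta_i\circ h$ on each cylinder one gets $g\circ h=h\circ T$ on the dense set of cylinder points, hence everywhere by continuity.

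I expect the one genuinely substantial point to be the equivalence in the second paragraph --- converting the qualitative density hypothesis into the quantitative vanishing of the $g$-cylinder mesh; once the nested towers are known to collapse to points on both sides, the symbolic bookkeeping, the monotonicity of $h$, and the identity $h\circ T=g\circ h$ are all routine.
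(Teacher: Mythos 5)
Your proof is correct and ultimately constructs the same map as the paper, but the route is genuinely different in its bookkeeping and in its key lemma, so a comparison is worthwhile. The paper (which proves this statement in the generalized form of Ulam's Theorem in Section~\ref{sec:3}) indexes the points of $g^{-n}(0)$ linearly as $\mu_{n,k}$, defines the finite correspondence $\widehat{h}_n$ by~\eqref{eq:3.3}, verifies the intertwining $\widehat{h}_n\circ g_1 = g_2\circ \widehat{h}_n$ on these finite sets via the combinatorial identities~\eqref{eq:3.1} and~\eqref{eq:3.2} (Calculation~\ref{calc:3.4} and Lemma~\ref{lema:3.5}), and obtains $h$ as the pointwise limit of the piecewise linear interpolants $h_n$. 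You instead code points by itineraries of the two inverse branches and match cylinders carrying equal words; the conjugacy relation then falls out of the shift on words rather than from the index identities, and your explicit equivalence (density of $g^{-\infty}(0)$ if and only if the cylinder mesh tends to $0$) makes rigorous exactly what the paper compresses into the one-line claims that $h=\lim h_n$ is ``well-defined due to the density of $g_1^{-\infty}(0)$'' and ``continuous due to the density of $g_2^{-\infty}(0)$'' --- a real expository gain, since both claims tacitly rest on the shrinking of the nested pre-image intervals. Two small steps deserve to be made explicit. First, passing from ``every nested intersection of cylinders is a singleton'' to ``the mesh tends to $0$'' needs a compactness (K\"onig's lemma) argument: if the mesh stayed $\geq \varepsilon$ at every level, the words $w$ with $\# I_w\geq \varepsilon$ would form an infinite finitely branching subtree (an ancestor of a long cylinder is at least as long), whence an infinite branch and a nondegenerate intersection; alternatively you can bypass the mesh altogether, because an increasing bijection of $[0,\,1]$ onto $[0,\,1]$ is automatically a homeomorphism, so singleton intersections already suffice. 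Second, in the necessity direction you assume $h$ is increasing; this needs the one-line justification the paper gives in Lemma~\ref{lema:3.6}: $h(0)$ is a fixed point of $g$ lying in $\{0,\,1\}$, and $g(1)=0$ rules out $h(0)=1$, so every conjugacy fixes the endpoints and is increasing. With these two routine patches your argument is complete.
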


Remind, that the set $g^{-\infty}(a) = \bigcup\limits_{n\geq
1}g^{-n}(a)$, where $g^{-n}(a) = \left\{ x\in [0,\, 1]:\, g^n(x) =
a\right\}$ for all $n\geq 1$, is called the complete pre-image of
$a$ (under the action of the map $g$).

In spite of the elegance of the statement of Theorem~\ref{th:1},
it may appear to be quite difficult to resolve for a given
unimodal map $g$, whether or not $g^{-\infty}(0)$ is dense in
$[0,\, 1]$. Thus, it is natural to restrict the attention to some
subclasses of the unimodal maps.

The simplest partial case of the unimodal maps is when the graphs
of both $g_l$ and $g_r$ in~\eqref{eq:1.2} are segments of lines,
i.e. the map~\eqref{eq:1.2} is of the form
\begin{equation}\label{eq:1.3}
f_v(x) = \left\{\begin{array}{ll}
\frac{x}{v},& \text{if }0\leq x\leqslant v,\\
 \frac{1-x}{1-v},& \text{if }
v\leq x\leq 1,
\end{array}\right.
\end{equation} where $v\in (0,\, 1)$ is a
parameter. Due to~\cite{Skufca} and~\cite{Yong-Guo-Wang}, we will
call the map~\eqref{eq:1.3} a \underline{\emph{skew tent map}}.
Remark that $f_{1/2}$ is the tent map~\eqref{eq:1.1}. The
existence and the uniqueness of the conjugacy of $f_{v_1}$ and
$f_{v_2}$ of the form~\eqref{eq:1.3} for all distinct $v_1,\,
v_2\in (0,\, 1)$ was stated in~\cite{Skufca}. In other words,
there exists the unique continuous invertible solution $h:\, [0,\,
1]\rightarrow [0,\, 1]$ of the functional equation
\begin{equation}\label{eq:1.4} h\circ f_{v_1} = f_{v_2}\circ h.
\end{equation}

We have independently proved in~\cite{Plakh-Fedor-2014} the
conjugateness of the tent map and $f_v$ for all $v\in (0,\, 1)$.
Precisely, we have proved the following
\begin{proposition}\cite[Lema~10]{Plakh-Fedor-2014}\label{prop:1}
For every $v\in (0,\, 1)$ the set $f_v^{-\infty}(0)$ is dense in
$[0,\, 1]$, where $f_v$ is skew tent map.
\end{proposition}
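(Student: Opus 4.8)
The plan is to prove density directly, by showing that the points of $f_v^{-\infty}(0)$ appear as the endpoints of the maximal intervals of monotonicity (laps) of the iterates $f_v^n$, and that these laps shrink to length $0$. First I would record the elementary facts $f_v^{-1}(0) = \{0,1\}$ and $f_v^{-1}(1) = \{v\}$, from which $f_v^{-(n+1)}(0) = f_v^{-n}(0) \cup f_v^{-n}(1)$. Since $f_v^{-n}(0) \cup f_v^{-n}(1)$ is precisely the set of points at which $f_v^n$ attains the value $0$ or $1$, it coincides with the set of common endpoints of the laps of $f_v^n$. Thus the finite set $f_v^{-(n+1)}(0) \subseteq f_v^{-\infty}(0)$ already contains all lap endpoints of $f_v^n$, and it suffices to bound the lengths of these laps.

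Second, I would establish by induction that $f_v^n$ has exactly $2^n$ laps, each mapped affinely by $f_v^n$ onto the whole interval $[0,1]$. The base case is immediate from \eqref{eq:1.3}: $f_v$ increases on $[0,v]$ and decreases on $[v,1]$, and each branch covers $[0,1]$. For the inductive step, writing $f_v^{n+1} = f_v \circ f_v^n$, each lap of $f_v^n$ is mapped bijectively onto $[0,1]$, so composing with $f_v$ (whose single kink at $v$ splits $[0,1]$ into two full branches) splits every lap of $f_v^n$ into two laps of $f_v^{n+1}$, each again onto $[0,1]$.

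Third comes the expansivity estimate, which I expect to be the heart of the argument. Both slopes of $f_v$ have absolute value at least $\lambda := 1/\max(v,\, 1-v) > 1$ for every $v\in(0,1)$. At any point $x$ that is not a kink of $f_v^n$, the chain rule gives $(f_v^n)'(x) = \prod_{k=0}^{n-1} f_v'(f_v^k(x))$, a product of $n$ factors each of modulus at least $\lambda$, whence $|(f_v^n)'(x)| \geq \lambda^n$. Since $f_v^n$ is affine on each lap $I$ with slope of modulus $\geq \lambda^n$ and maps $I$ onto $[0,1]$, we obtain $|I| \leq \lambda^{-n}$.

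Combining the three steps, the set $f_v^{-(n+1)}(0)$ partitions $[0,1]$ into laps of length at most $\lambda^{-n}$, so its mesh tends to $0$ as $n\to\infty$; hence every point of $[0,1]$ is approximated arbitrarily well by points of $f_v^{-\infty}(0)$, giving the desired density. The only delicate point is the bookkeeping at the kinks, where the derivative is undefined: one must check that the full/onto property of laps from the second step guarantees that the affine slope bound of the third step applies throughout the interior of each lap, so that the endpoint count and the length bound genuinely refer to the same intervals.
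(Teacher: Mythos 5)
Your proof is correct, but it takes a genuinely different route from the paper. The paper does not prove Proposition~\ref{prop:1} directly at all: it cites it from the earlier work and instead proves the generalization, Theorem~\ref{th:2}, for arbitrary firm carcass maps. That proof rests on the subdivision-ratio machinery of Section~\ref{sec:4}: each interval $I_{n,k}$ between consecutive points of $g^{-n}(0)$ is split by the next pre-image level in a ratio $\delta_{n,k}$ drawn (via Remarks~\ref{rem:4.10}--\ref{rem:4.13} and~\ref{rem:4.20}) from the finite set $\mathcal{V}$ of base ratios, so that $\#I_{n,k}\leq d\cdot (v^+)^{n-n_0+1}$ with $v^+<1$ (Remark~\ref{rem:4.16}), forcing the mesh to zero. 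You instead exploit uniform expansivity, which is special to the skew tent map: both branches of $f_v$ have slope of modulus at least $\lambda = 1/\max(v,1-v)>1$, each of the $2^n$ laps of $f_v^n$ is affine onto $[0,1]$, hence has length at most $\lambda^{-n}$, and the lap endpoints are exactly $f_v^{-(n+1)}(0)$. Quantitatively the two bounds coincide here, since $\lambda^{-1}=\max(v,1-v)$ is precisely the paper's $v^+$ for this map; conceptually they differ in scope. Your chain-rule argument is shorter and more standard, and your closing ``delicate point'' does check out: an interior point $x$ of a lap of $f_v^n$ cannot satisfy $f_v^k(x)=v$ for $k<n$, since that would force $f_v^n(x)\in\{0,1\}$ and make $x$ a lap endpoint (and in any case the inductive affinity of the laps already yields the slope bound without the chain rule). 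The trade-off is that your argument does not extend to Theorem~\ref{th:2}: a firm carcass map may have individual linear pieces of slope less than $1$ in modulus, so no uniform expansion constant exists, and the paper's bookkeeping of subdivision ratios through the dynamics is exactly what replaces expansivity in the general case.
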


The next property of the conjugacy from~\eqref{eq:1.4} is obtained
in~\cite{Skufca}.

\begin{proposition}\cite[Prop.~2]{Skufca}\label{prop:2}
If the derivative of the continuous invertible solution $h$
of~\eqref{eq:1.4} is finite\footnote{in fact, Proposition~2
of~\cite{Skufca} says that $h'(x)=0$ everywhere, where it exists.
Nevertheless, it follows from the proof of Proposition~2, that the
authors of~\cite{Skufca} assume that the derivative can be only
finite. In fact, they take an arbitrary point $x\in (0, 1)$ and
construct a sequence $k_{n}$ such that $x\in I_{n}
=\left[\frac{k_{n} }{2^{n} },\, \frac{k_{n} +1}{2^{n} } \right]$
and $p_{n} =h\left(\frac{k_{n} +1}{2^{n} }
\right)-h\left(\frac{k_{n} }{2^{n} } \right)$. After this they
claim that if $h'(x)$ exists and is non-zero, then $\frac{p_{n+1}
}{p_{n} } \to \frac{1}{2}$. But this is true only in the case
$h'(x)<\infty$. Following~\cite[Sect~92, 101]{Fihtengoltz}, we
will assume, that the derivative is a limit, which can be also
infinite and in this case so is the value of the derivative.} at
some $x\in [0,\, 1]$, then $h'(x)=0$.
\end{proposition}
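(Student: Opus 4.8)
The plan is to compare the two families of dynamical partitions generated by $f_{v_1}$ and $f_{v_2}$, using that the conjugacy carries one onto the other \emph{with the same symbolic coding}. First I would extract the elementary consequences of~\eqref{eq:1.4}. The solution $h$ is an increasing homeomorphism with $h(0)=0$ and $h(1)=1$, and from $f_{v_2}(h(v_1))=h(f_{v_1}(v_1))=h(1)=1$ together with the fact that $f_{v_2}$ attains the value $1$ only at $v_2$ one gets $h(v_1)=v_2$: the conjugacy sends the turning point of $f_{v_1}$ to that of $f_{v_2}$. For each $n$ the turning point together with its pre-images $\bigcup_{j=0}^{n-1}f_{v_1}^{-j}(v_1)$ cut $[0,1]$ into the maximal intervals of monotonicity of $f_{v_1}^{n}$, each of which is mapped affinely onto $[0,1]$; since $h$ conjugates the two maps and $h(v_1)=v_2$, it carries this partition bijectively onto the analogous partition determined by $f_{v_2}$.

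Fix $x\in[0,1]$ and let $I_n$ be the monotonicity interval of $f_{v_1}^{n}$ containing $x$ (choosing a side once and for all should $x$ be a partition point). These intervals are nested and shrink to $x$, since $|I_n|=v_1^{a_n}(1-v_1)^{b_n}$ with $a_n+b_n=n$, hence $|I_n|\leq(\max(v_1,1-v_1))^{n}\to0$. Refining from level $n$ to level $n+1$ splits $I_n$ at the single pre-image of the turning point $v_1$, so the child containing $x$ has length $v_1|I_n|$ or $(1-v_1)|I_n|$ according to the sign $\varepsilon_n$ of $f_{v_1}^{n}(x)-v_1$; that is, $|I_{n+1}|/|I_n|\in\{v_1,\,1-v_1\}$. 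The decisive observation is that the image interval $h(I_n)$ splits the same way with respect to $f_{v_2}$: because $h$ is increasing and $h(v_1)=v_2$, one has $f_{v_1}^{n}(x)<v_1$ if and only if $f_{v_2}^{n}(h(x))=h(f_{v_1}^{n}(x))<v_2$, so the \emph{same} symbol $\varepsilon_n$ dictates the refinement of $h(I_n)$ and $|h(I_{n+1})|/|h(I_n)|\in\{v_2,\,1-v_2\}$, with the left/right choice tied to that of $I_n$.

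Now I would invoke the hypothesis through the chord slopes $r_n:=|h(I_n)|/|I_n|$. Since $I_n$ shrinks to $x$, these slopes converge to $h'(x)$ (a chord slope of $h$ over an interval containing $x$ is a convex combination of one-sided difference quotients, each tending to $h'(x)$ when the derivative exists). Assume, for contradiction, that $h'(x)$ is finite and nonzero; then $r_{n+1}/r_n\to1$. On the other hand, by the previous paragraph
\begin{equation*}
\frac{r_{n+1}}{r_n}=\frac{|h(I_{n+1})|/|h(I_n)|}{|I_{n+1}|/|I_n|}\in\Bigl\{\frac{v_2}{v_1},\ \frac{1-v_2}{1-v_1}\Bigr\}.
\end{equation*}
As $v_1\neq v_2$, both of these two values differ from $1$, so $r_{n+1}/r_n$ stays bounded away from $1$ and cannot tend to it — a contradiction. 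Hence a finite $h'(x)$ must vanish.

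I expect the main obstacle to be the bookkeeping behind the ``same symbol'' step: one must verify that the itinerary of $x$ under the branches of $f_{v_1}$ coincides with the itinerary of $h(x)$ under the branches of $f_{v_2}$, which is precisely where $h(v_1)=v_2$ and the monotonicity of $h$ enter. The only genuinely analytic ingredient is the elementary fact that nested intervals shrinking to $x$ have chord slopes converging to $h'(x)$; it is exactly here that finiteness is used, since for $h'(x)=+\infty$ the ratio $r_{n+1}/r_n$ need no longer tend to $1$. This is why the finiteness assumption cannot be removed, and why the proposition leaves room for the infinite derivatives that are the subject of the present paper.
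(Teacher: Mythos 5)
Your proof is correct, and it takes a genuinely different --- and more self-contained --- route than the paper. The paper never argues Proposition~\ref{prop:2} directly: it cites~\cite{Skufca} and later remarks that the statement is a simple corollary of Theorem~\ref{th:4}, whose proof runs through the whole apparatus of the approximations $h_n$, the $g$-expansions of Section~\ref{sec:4}, and the product formula $h_n'(x)=\prod_{k=2}^{n}\left(2\,\myae_v(x_k,\,x_{k-1})\right)$ of Remark~\ref{rem:8.4}; since each factor there is a constant distinct from $1$, the product can only converge to $0$ or diverge, never to a finite nonzero limit. Your argument isolates exactly this mechanism and uses it locally, without the limit-function formalism: you replace the dyadic partition from the footnote's sketch of~\cite{Skufca} by the partition into maximal monotonicity intervals of $f_{v_1}^{n}$, match it symbol-by-symbol with the corresponding $f_{v_2}$-partition using $h(v_1)=v_2$ and the monotonicity of $h$, and observe that the ratio of successive chord slopes $r_{n+1}/r_n$ lies in the two-point set $\left\{\frac{v_2}{v_1},\,\frac{1-v_2}{1-v_1}\right\}$, bounded away from $1$, while a finite nonzero $h'(x)$ would force $r_{n+1}/r_n\to 1$. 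This buys elementarity and even a little extra generality: it handles arbitrary distinct $v_1,v_2$ in one stroke, whereas Theorem~\ref{th:4} is stated for the tent map versus $f_v$, so deducing the general case from it needs a composition of conjugacies (or Theorem~\ref{th:3} together with the non-piecewise-linearity of $h$). What the paper's heavier route buys in exchange is the exact value of $h'(x)$, infinite values included, of which Proposition~\ref{prop:2} is a one-line consequence. Two details you waved at but should pin down in a final write-up: that $h$ is increasing with $h(0)=0$ and $h(1)=1$ (the fixed-point argument as in Lemma~\ref{lema:3.6}), and the bookkeeping when $x$ eventually becomes a partition point --- your ``choose a side once and for all'' does work, because from that level on $x$ is an endpoint of the nested intervals and the chord slopes become one-sided difference quotients, which still converge to the finite $h'(x)$.
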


This result is especially interesting, because Lebesgue's Theorem
(see~\cite{Lebeg} or~\cite[p. 15]{Riss}) claims that each
nondecreasing function has a finite derivative at every point with
the possible exception of the points of a set of measure zero. The
conjugacy $h$ of the maps $f_{v_1}$ and $f_{v_2}$ is also studied
in~\cite{Yong-Guo-Wang}.

\begin{proposition}\label{prop:3}\cite[Prop.~1]{Yong-Guo-Wang}
For any distinct $v_1,\, v_2\in (0, 1)$ the length of the graph of
the conjugacy $h$ of maps $f_{v_1}$ and $f_{v_2}$ of the
form~\eqref{eq:1.3} equals $2$.
\end{proposition}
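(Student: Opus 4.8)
The plan is to realize the graph length as the arc length of the curve $t \mapsto (t, h(t))$, i.e.
$$L = \sup \sum_i \sqrt{(\Delta x_i)^2 + (\Delta y_i)^2},$$
the supremum being taken over all partitions of $[0,\,1]$, with $\Delta x_i > 0$ the horizontal increments and $\Delta y_i = h(x_{i+1}) - h(x_i) \geq 0$ the vertical ones, and then to squeeze $L$ between $2$ and $2$. Since the conjugacy $h$ is an increasing homeomorphism with $h(0) = 0$ and $h(1) = 1$, every partition satisfies $\sum_i \Delta x_i = 1$ and $\sum_i \Delta y_i = 1$. The upper bound is then immediate from the elementary inequality $\sqrt{a^2 + b^2} \leq a + b$ for $a,\,b \geq 0$: each partition sum is at most $\sum_i \Delta x_i + \sum_i \Delta y_i = 2$, so $L \leq 2$.

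The crucial input for the lower bound is that $h$ is a \emph{singular} function. By Lebesgue's theorem the monotone $h$ has a finite derivative almost everywhere, and Proposition~\ref{prop:2} forces this derivative to vanish wherever it is finite; hence $h'(x) = 0$ almost everywhere. Consequently the Lebesgue--Stieltjes measure $\mu$ determined by $\mu((a,\,b]) = h(b) - h(a)$ has no absolutely continuous part, and being attached to a continuous $h$ it has no atoms, so $\mu$ is purely singular continuous and is concentrated on a Borel set $A$ with $\mathrm{Leb}(A) = 0$.

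I would then exploit this singularity to build a partition whose sum is close to $2$. Fix $\varepsilon > 0$. By outer regularity choose an open set $U \supseteq A$ with $\mathrm{Leb}(U) < \varepsilon$; since $\mu$ lives on $A$ we have $\mu(U) = 1$. Writing $U$ as the disjoint union of its component intervals $(a_i,\,b_i)$ and using $\mu((a_i,\,b_i)) = h(b_i) - h(a_i)$ (continuity of $h$), I pick finitely many of them, $J_k = [a_k,\,b_k]$ for $k = 1,\dots,N$, with $\sum_{k} \bigl( h(b_k) - h(a_k) \bigr) > 1 - \varepsilon$; automatically $\sum_k (b_k - a_k) \leq \mathrm{Leb}(U) < \varepsilon$. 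The endpoints of the $J_k$ cut $[0,\,1]$ into the $N$ chosen intervals and the complementary gap intervals. On each $J_k$ the chord obeys $\sqrt{(\Delta x)^2 + (\Delta y)^2} \geq \Delta y = h(b_k) - h(a_k)$, and on each gap it obeys $\sqrt{(\Delta x)^2 + (\Delta y)^2} \geq \Delta x$; summing, the partition sum exceeds $(1 - \varepsilon) + \bigl( 1 - \sum_k (b_k - a_k) \bigr) > 2 - 2\varepsilon$. Hence $L \geq 2 - 2\varepsilon$ for every $\varepsilon > 0$, giving $L \geq 2$ and therefore $L = 2$.

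The main obstacle, and the only point where the specific dynamics enter, is establishing that $h$ is genuinely singular; this rests entirely on Proposition~\ref{prop:2} together with Lebesgue's differentiation theorem. Once singularity is secured the geometric estimates are routine, although one must still check carefully that $\mu$ concentrates on a Lebesgue-null set and that a finite subfamily of component intervals can simultaneously capture almost all of the vertical rise while keeping its total width below $\varepsilon$.
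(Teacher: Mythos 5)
Your proof is correct, but it takes a genuinely different route from the one in the paper. The paper never argues through singularity: for the skew tent case it works with the explicit piecewise linear approximants $h_n$, computing their slopes via Remark~\ref{rem:8.4} and the exact length formula~\eqref{eq:8.3}, and obtains the limit $l_n(v)\to 2$ (Lemma~\ref{lema:8.5}) either from the general machinery of Theorem~\ref{th:5} --- where the key input is the dichotomy that at every $g_1$-finite point the chord slopes $d_n$ tend to $0$ or $+\infty$ (Theorem~\ref{th:3} plus Lemma~\ref{lema:6.1}), followed by a compactness covering of $[0,1]$ by intervals on which the chord length nearly equals $\Delta x+\Delta y$ --- or from Shevchenko's probabilistic argument in Section~\ref{sec:7}, which reads $l_n$ as an expectation, tilts the measure by $h_n'$, and applies the law of large numbers twice. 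Your argument instead imports Proposition~\ref{prop:2} and Lebesgue's differentiation theorem to conclude $h'=0$ a.e., hence that the Stieltjes measure $dh$ is continuous and singular, and then extracts a partition capturing almost all vertical rise on a set of small total width; the two-sided squeeze $\sqrt{a^2+b^2}\leq a+b$ versus the near-taut chords is clean and complete (the only step stated without proof, that $h'=0$ a.e.\ forces the absolutely continuous part of $dh$ to vanish, is a standard fact about differentiation of measures and deserves a citation). What your approach buys is generality and economy: it shows that \emph{any} strictly increasing continuous singular self-homeomorphism of $[0,1]$ has graph length $2$, with the dynamics entering only through Proposition~\ref{prop:2}; its cost is dependence on that external result (whose original proof the paper's footnote flags as implicitly assuming finiteness of the derivative). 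The paper's approach is heavier but self-contained and constructive --- it quantifies how fast $l_n\to 2$ via~\eqref{eq:8.3} and, through Theorem~\ref{th:5}, extends to all firm carcass maps, where it establishes the $0$/$\infty$ dichotomy directly rather than deducing singularity from an outside source.
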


Remark that the length of the graph, mentioned in
Proposition~\ref{prop:3}, is the maximum possible length of a
monotone $[0,\, 1]\rightarrow [0,\, 1]$ function.

We will call the map defined on the set $A\subseteq \mathbb{R}$
with values in $\mathbb{R}$, \underline{\emph{linear}}, it its
graph is a line (or a line segment). We will call the map
\underline{\emph{piecewise linear}}, if its domain can be divided
into finitely many intervals, such that on each of them the map is
linear. A point, where the piecewise linear map is not
differentiable, will be called a \underline{\emph{kink}}.

The piecewise linear unimodal map will be called
\underline{\emph{carcass map}}. A carcass map, all whose kinks
belong to the complete pre-image of $0$, will be called
\underline{\emph{a firm carcass map}}.

Let $g_1, g_2:\, [0, 1]\rightarrow [0, 1]$ be firm carcass maps.
For every $n\geq 1$ denote by $h_n:\, [0,\, 1]\rightarrow [0,\,
1]$ the increasing piecewise linear map such that $h_n(\,
g_1^{-n}(0)\, ) = g_2^{-n}(0)$ and all the kinks of $h_n$ belong
to $g_1^{-n}(0)$. We will prove that these $g_1$ and $g_2$ are
topologically conjugated, moreover, each of them is conjugated to
the tent map. We will prove that $h = \lim\limits_{n\rightarrow
\infty}h_n$, where the limit is considered point-wise, exists and
this limit function is the homeomorphism, which satisfies the
functional equation
\begin{equation}\label{eq:1.5}
h\circ g_1 = g_2\circ h.
\end{equation}

In fact, the idea of this construction appeared in the original
proof of Theorem~\ref{th:1}, given in~\cite{Ulam-1964-a}. We will
prove the following generalization our Proposition~\ref{prop:1}.

\begin{theorem}\label{th:2}
The complete pre-image of 0 under the action of every firm carcass
map is dense in $[0,\, 1]$.
\end{theorem}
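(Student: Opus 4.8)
The plan is to prove the statement in metric form: I will show that $g^{-\infty}(0)$ meets every nonempty open interval, by analysing how the finite sets $g^{-n}(0)$ refine as $n$ grows. The starting point is to identify $g^{-n}(0)$ geometrically. Since $g$ is unimodal with $g(0)=g(1)=0$ and $g(v)=1$, one has $g^{-1}(0)=\{0,1\}$ and $g^{-1}(1)=\{v\}$, so $v\in g^{-2}(0)$, and every kink of $g$ lies in $g^{-\infty}(0)$ by the definition of a firm carcass map. Because each linear piece of $g$ has nonzero slope, each iterate $g^n$ is piecewise linear with no flat pieces, so its zero set $g^{-n}(0)$ is finite and consists exactly of its points of local minimum together with the endpoints $0,1$. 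I would first check the basic dichotomy that the turning points of $g^n$ (the points where monotonicity is lost) are precisely $\bigcup_{k=0}^{n-1}g^{-k}(v)$, that $g^n$ takes the value $0$ at its local minima and the value $1$ at its local maxima, and that local minima and maxima strictly alternate.

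Granting this, between two consecutive points of $g^{-n}(0)$ the graph of $g^n$ rises from $0$ to $1$ and falls back to $0$; that is, on each maximal interval $(a,b)$ complementary to $g^{-n}(0)$ the map $g^n$ is unimodal and onto $[0,1]$. Consequently $(a,b)$ contains exactly one point $c$ with $g^n(c)=1$, and this $c$ satisfies $g^{n+1}(c)=g(1)=0$, so $c\in g^{-(n+1)}(0)$ and it splits $(a,b)$ into the two complementary intervals of $g^{-(n+1)}(0)$ lying inside it. Thus \emph{every} gap of $g^{-n}(0)$ is strictly subdivided at the next level. If $g^{-\infty}(0)$ were not dense, there would be a maximal open interval $U$ disjoint from it; taking for each $n$ the complementary interval $U_n=(a_n,b_n)$ of $g^{-n}(0)$ that contains $U$, one gets a nested sequence $U_n\downarrow U$ whose endpoints lie in $\overline{g^{-\infty}(0)}$, with the splitting points $c_n\in U_n$ forced to converge to an endpoint of $U$. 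Since $U$ contains no kink, $g$ is affine on $U$, and the invariance property (a point whose image lies in $g^{-\infty}(0)$ already lies there) shows that all forward images $g^m(U)$ are again gaps on which $g$ is affine, whence $|g^m(U)|=|U|\prod_{k<m}|s_k|\le 1$, where $s_k$ is the slope of the piece containing $g^k(U)$.

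The remaining, and main, difficulty is the metric step: to show that the nested gaps actually shrink, $|U_n|\to0$, which contradicts $U\subseteq U_n$ and $|U|>0$. Equivalently I must rule out that the slope product $\prod_{k<m}|s_k|$ stays bounded, i.e.\ the \emph{neutral} case in which the orbit of $U$ keeps meeting pieces of slope of modulus at most $1$. This is exactly where firmness must be used, and where fullness alone does not suffice, since general full unimodal maps can have attracting cycles and non-dense preimages of $0$. My plan here is to pass to the finite Markov partition $g^{-N}(0)$, with $N$ chosen so that all kinks of $g$ (finitely many, each in some $g^{-n}(0)$) are partition points; with respect to it $g$ is affine and Markov, and I would argue that a bounded slope product along the orbit of $U$ forces a periodic interval carrying a neutral or attracting periodic orbit, and then use that firmness (all kinks eventually map to $0$) is incompatible with such an orbit, driving $\prod_{k<m}|s_k|\to\infty$ and $|U_n|\to 0$. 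I expect this exclusion of the neutral periodic behaviour to be the technical heart of the argument; the combinatorial subdivision above reduces the whole theorem to it.
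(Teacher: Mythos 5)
The combinatorial half of your argument (each gap of $g^{-n}(0)$ contains exactly one point $c$ with $g^n(c)=1$, hence $c\in g^{-(n+1)}(0)$ and every gap is strictly subdivided) is correct and coincides with the paper's Remark~3.1. But the proof stops exactly where the theorem lives: the metric step $|U_n|\to 0$, which you yourself call ``the technical heart,'' is only announced (``I would argue\dots'', ``I expect\dots''), not carried out. Moreover, the plan you sketch has a hole even as a plan. Bounded slope products along the orbit of $U$ do \emph{not} force an eventually periodic interval: the orbit of a maximal gap may be wandering, i.e.\ all $g^m(U)$ pairwise disjoint, in which case $\sum_m |g^m(U)|\leq 1$ simply forces the slope products to tend to $0$ and no contradiction arises from your inequality $|g^m(U)|\leq 1$; this case is not addressed at all. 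And the asserted incompatibility of firmness with a neutral or attracting periodic interval is left unproved --- it is not a formality, since firmness does not a priori exclude branches of slope modulus $\leq 1$, and any direct proof of that incompatibility essentially requires the quantitative control you are missing. So as it stands the proposal reduces the theorem to an unproved (and partly misdirected) claim.

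The gap is avoidable, and the paper's proof shows the dynamics of slopes never needs to enter. Firmness gives an $n_0$ such that all kinks of $g$ lie in $g^{-n_0}(0)$; hence for $n\geq n_0$ the map $g$ is affine on each partition interval $I_{n,k}$ and sends it onto a partition interval one level down. Affinity transports the \emph{subdivision ratio} $\delta_{n,k}=\frac{\mu_{n+1,2k+1}-\mu_{n,k}}{\mu_{n,k+1}-\mu_{n,k}}$: one gets $\delta_{n,k}\in\{\delta,\,1-\delta\}$ where $\delta$ is the ratio of the image interval (Remarks~4.11--4.13), so by induction every $\delta_{n,k}$ with $n\geq n_0$ belongs to the finite set of ratios generated at level $n_0$ (Remark~4.20). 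Setting $v^+=\max_{v\in\mathcal{V}}\{v,\,1-v\}<1$, each gap therefore contracts by a factor at most $v^+$ at \emph{every} level, whence
$$\max_k \# I_{n,k}\ \leq\ d\cdot (v^+)^{\,n-n_0+1}\ \longrightarrow\ 0,$$
and density follows directly --- no maximal gap $U$, no Markov partition, no exclusion of non-repelling cycles. The key structural fact you did not notice is this transport of relative proportions (lengths are controlled multiplicatively by the finitely many ratios, not by slopes); once it is in hand, the ``neutral case'' you isolate as the main difficulty never occurs as a difficulty at all.
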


Notice that Theorems~\ref{th:1} and~\ref{th:2} imply that every
firm carcass map is topologically conjugated to the tent-map.

For any $x\in [0,\, 1]$ denote
\begin{equation}\label{eq:1.6}L(x)=\left\{
\begin{array}{ll}
\lim\limits_{n\rightarrow \infty}h_n'(x-) & \text{if } x>0\\
\lim\limits_{n\rightarrow \infty}h_n'(x+) & \text{otherwise}
\end{array}\right.
\end{equation} and
\begin{equation}\label{eq:1.7}
R(x)=\left\{ \begin{array}{ll}
\lim\limits_{n\rightarrow \infty}h_n'(x+) & \text{if } x<1\\
\lim\limits_{n\rightarrow \infty}h_n'(x-) & \text{otherwise}
\end{array}\right.
\end{equation}

We will prove the following theorem.

\begin{theorem}\label{th:3}
Let $g_1$ and $g_2$ be firm carcass maps and let $h$ be the
conjugacy, which satisfies~\eqref{eq:1.5}.

1. If for at least one $x\in [0,\, 1]$ the derivative $h'(x)$
exists, is positive and finite, then $h$ is piecewise linear.

2. If $h$ is not piecewise linear, then $h'(x)$ exists if and only
if there exists $L(x)$, $R(x)$ and, moreover, $L(x) = R(x)$. In
this case $h'(x) = L(x)$.
\end{theorem}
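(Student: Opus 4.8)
\medskip
\noindent\textbf{Proof proposal.}

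The plan rests on two structural facts about the approximations, which I would record first. \textbf{(A)} Since the images of the grid nodes stabilize, $h_n$ agrees with $h$ on $g_1^{-n}(0)$; hence for the level-$n$ interval $[a,b]$ abutting $x$ on the appropriate side one has $h_n'(x-)=\frac{h(b)-h(a)}{b-a}$, i.e. $h_n'(x\pm)$ is literally a one-sided difference quotient of $h$. Because the grid mesh tends to $0$ (the nodes are dense by Theorem~\ref{th:2} and the partitions are nested), $L(x)$ and $R(x)$ are exactly the limits of the left/right difference quotients of $h$ taken along the shrinking grid. In particular, if $h'(x)$ exists it is one of these subsequential limits, so $L(x)=R(x)=h'(x)$; this already gives the easy ($\Rightarrow$) implication of part~2. \textbf{(B)} The nodes also force the intertwining $h_{n-1}\circ g_1=g_2\circ h_n$; differentiating gives the one-step recursion $h_n'(x)=\frac{g_1'(x)}{g_2'(h_n(x))}\,h_{n-1}'(g_1 x)$, and letting $n\to\infty$ (at points where $h(x)$ is not a kink of $g_2$) yields functional equations expressing $L(x),R(x)$ through $L(g_1x),R(g_1x)$ with multiplier $m(x)=|g_1'(x)|/|g_2'(h(x))|$, which takes only finitely many values.

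For the substantive ($\Leftarrow$) direction of part~2 I would fix $x$ with $L(x)=R(x)=\ell$ and prove $h'(x)=\ell$. Monotonicity of $h$ together with the mediant inequality reduces this to showing that \emph{every} difference quotient of $h$ over a small interval abutting $x$ --- not merely those whose far endpoint is a grid node --- converges to $\ell$; equivalently, one must rule out oscillation of $h$ between consecutive nodes. Here I would renormalize: iterate $g_1$ until the small interval near $x$ is expanded to unit scale, transporting its difference quotient by the finite-valued multiplier cocycle of \textbf{(B)}. The blown-up image is covered by a uniformly bounded number of grid intervals, and the hypothesis $L(x)=R(x)=\ell$ pins the cocycle so that each of their slopes is close to $\ell$; pulling the bound back then squeezes the original quotient. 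The uniform (bounded-distortion) control of these finitely many neighbouring slopes is the technical heart of part~2.

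For part~1, assume $h'(x_0)=\ell\in(0,\infty)$. Using \textbf{(B)} one propagates genuine finite, positive differentiability along the forward orbit of $x_0$ and to all of its $g_1$-preimages (excluding the countably many kink and turning points), obtaining $h'(y)=\ell\cdot\prod m$ on the grand orbit of $x_0$, which is dense by Theorem~\ref{th:2}. It remains to upgrade ``finite positive derivative on a dense set'' to global piecewise linearity, and this is the main obstacle. The idea is that genuine differentiability at $x_0$, run through the same renormalization as in part~2, forces the log-multiplier cocycle $\phi=\log|g_1'|-\log|g_2'\circ h|$ to have Birkhoff sums that stay bounded uniformly over a whole neighbourhood, not just along the single orbit of $x_0$; by density of the preimages this rigidity spreads, forcing $\phi$ to be a coboundary of finite range. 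That in turn makes the grid slopes stabilize after finitely many refinements, i.e. $h=h_N$ for some $N$, so $h$ is piecewise linear. I expect the delicate point throughout to be exactly this passage from a one-point, one-orbit hypothesis to the global cohomological triviality of $\phi$, since a single orbit (e.g. an eventually periodic $x_0$) carries too little information by itself; the resolution must exploit the expansion (bounded distortion) and the density of $g_1^{-\infty}(0)$ simultaneously.
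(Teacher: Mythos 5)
Your part 1 is not a proof, and you say so yourself: you correctly isolate the passage from ``finite positive derivative at one point'' to global piecewise linearity as the crux, but the route you propose --- boundedness of the Birkhoff sums of $\phi=\log|g_1'|-\log|g_2'\circ h|$ over a neighbourhood and cohomological triviality of finite range --- is left as a hope, and it is also much heavier than what is needed. The paper closes exactly this gap by an elementary \emph{exact invariance}, not a rigidity argument. Since $g_1,g_2$ are linear on all grid intervals of level $n\geq n_0$, Lemma~\ref{lema:2.8} shows that the relative deviations $\Delta_L,\Delta_R$ of $h$ from its linear interpolant are transported \emph{unchanged} (up to swapping $L$ and $R$) by an application of $g_1$ (Lemma~\ref{lema:5.16}); consequently the deviation seen at level $n$ around $x$ coincides with a deviation already present at the fixed level $n_0-1$ (Lemma~\ref{lema:5.17}). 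If $h'(x)$ exists, is positive and finite, then $\Delta_n(x)\to 0$, and the scale invariance forces the level-$(n_0-1)$ deviation to vanish, giving an interval $I_{n_0-1,k}$ on which $h$ is \emph{linear}. Globalization is then not cohomological at all: part 2 of Lemma~\ref{lema:2.9} propagates linearity on one interval through the conjugacy equation~\eqref{eq:1.5} to piecewise linearity on all of $[0,1]$. So the one-point hypothesis spreads because the deviation functional is exactly invariant under renormalization, and no control of the multiplier cocycle beyond its finite range is ever required.

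Your part 2 is, in substance, the paper's argument: your ``blow up to unit scale and compare with a bounded number of grid intervals'' is realized there by the interval arithmetic of Lemma~\ref{lema:5.5} (the quantities $v_-$, $v^+$, $v_0$ playing the role of bounded distortion), culminating in Lemmas~\ref{lema:5.13}--\ref{lema:5.15}, which bound an arbitrary one-sided difference quotient $\frac{h(\widehat{x}_n)-h(s)}{\widehat{x}_n-s}$ between $k_-\cdot h_{n+i}'$ and $k^+\cdot h_{n+i}'$ with constants independent of $x$. But note a point your sketch glosses over: $k_-$ and $k^+$ are multiplicative constants not close to $1$, so this comparison squeezes the quotient onto $L(x)=R(x)=\ell$ only when $\ell\in\{0,+\infty\}$; it cannot pin down a finite positive $\ell$, and your claim that the covering grid slopes are ``close to $\ell$'' would require control of \emph{all} deep-level slopes near $x$, not just the abutting ones that $L$ and $R$ see --- bounded distortion alone does not give that. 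The reason this is harmless is the interplay with part 1, which your proposal does not exploit: when $h$ is not piecewise linear, a common finite positive value of $L$ and $R$ would yield a finite positive $h'(x)$ and contradict part 1 (via Lemma~\ref{lema:5.17} again), so the only limits that ever need to be squeezed are $0$ and $\infty$, for which the crude multiplicative bounds suffice. In short: your easy direction of part 2 and your comparison strategy match the paper; the finite-positive case of part 2 needs the invariance lemma rather than squeezing; and part 1, as proposed, has a genuine missing step that the paper's $\Delta$-invariance plus Lemma~\ref{lema:2.9} resolves.
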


For every $a,\, b\in \{0,\, 1\}$ denote
\begin{equation}\label{eq:1.8}
\myae_v(a,\, b) = \left\{ \begin{array}{ll} v & \text{if
}a=b,\\
1-v& \text{if }a\neq b\end{array}\right. \end{equation}

Theorem~\ref{th:3} can be specified in the case of the conjugation
of the tent map with a skew tent map.

\begin{theorem}\label{th:4}
For any $v\in (0,\, 1)$ let $h$ be the conjugacy of the tent map
$f$ and the map $f_v$ of the form~\eqref{eq:1.3}, i.e.
$$f\circ h = h\circ f_v.
$$ The %
derivative $h'(x)$ exists if and only if there exits the limit
$\myae_\infty(x) =\prod\limits_{k=2}^\infty (2\, \myae_v(x_{k},
x_{k-1}))$, where $ 0.x_1x_2\ldots $ is the binary expansion of
$x$. Moreover, in this case $h'(x) = \myae_\infty(x)$.
\end{theorem}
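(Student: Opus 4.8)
The plan is to compute $L(x)$ and $R(x)$ from the cylinder lengths of $f$ and $f_v$ and then feed the outcome into Theorem~\ref{th:3}. First I would set aside the trivial case $v=\tfrac12$, where $f_v=f$ and $h=\mathrm{id}$. For $v\neq\tfrac12$ the tent map $f=f_{1/2}$ and $f_v$ are distinct maps of the form~\eqref{eq:1.3}, so Proposition~\ref{prop:3} gives that the graph of $h$ has length $2$; since any increasing piecewise linear self-map of $[0,1]$ has graph length strictly below $2$ (each affine piece contributes $\sqrt{\Delta x^2+\Delta y^2}<\Delta x+\Delta y$ with $\Delta x,\Delta y>0$), the map $h$ is not piecewise linear. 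Hence Theorem~\ref{th:3}(2) applies: $h'(x)$ exists iff $L(x)$ and $R(x)$ exist and coincide, and then $h'(x)=L(x)$. Just as important, the contrapositive of Theorem~\ref{th:3}(1) shows that $h'(x)$ is never a positive finite number, so --- $h$ being increasing --- every value $h'(x)$ that exists equals $0$ or $+\infty$. This constraint is the lever in the last step.

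Next I would build a binary dictionary, working in the orientation in which $x$ lives in the tent coordinate so that its binary digits record the $f$-itinerary. The conjugacy matches the dyadic partition $f^{-n}(0)=\{k/2^{n-1}\}$ with the partition $f_v^{-n}(0)$ symbol by symbol, and $h_n$ is the affine interpolation between them, so $h_n'$ on a cylinder equals the quotient of the two cylinder lengths. One checks that $f$ acts on binary expansions by the shift followed by complementation of all surviving digits whenever the deleted digit is $1$; consequently, for $x=0.x_1x_2\dots$ the $j$-th itinerary symbol lies left of the turning point exactly when $x_j=x_{j+1}$ and right when $x_j\neq x_{j+1}$ for $j\geq1$, whereas the zeroth symbol is left iff $x_1=0$.

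A cylinder of $f_v$ whose length-$(n-1)$ itinerary has $a$ left and $b$ right symbols has length $v^{a}(1-v)^{b}$, by the multiplicativity of lengths under the two affine branches (a left branch scales by $v$, a right branch by $1-v$), while the matching tent cylinder has length $2^{-(n-1)}$. Their quotient, the slope of $h_n$ on the cylinder through $x$, is therefore $(2v)^{a}(2(1-v))^{b}$, which by the dictionary equals $2\,\myae_v(x_1,0)\cdot\prod_{k=2}^{n-1}2\,\myae_v(x_k,x_{k-1})$. The finite products $\prod_{k=2}^{n-1}2\,\myae_v(x_k,x_{k-1})$ are exactly the partial products defining $\myae_\infty(x)$. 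For non-dyadic $x$ there is one expansion and $x$ is interior to every cylinder, so $L(x)=R(x)=2\,\myae_v(x_1,0)\,\myae_\infty(x)$; for dyadic $x$ the two one-sided limits are read from the two binary expansions, whose tails are constant strings and hence give the same limiting behaviour.

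Finally I would cash in the degeneracy from the first step. If the partial products converged to a positive finite number, then $L(x)$ (and $R(x)$) would be a positive finite constant times that number, making $h'(x)$ positive finite and contradicting Theorem~\ref{th:3}(1); hence, whenever they converge, the limit is $0$ or $+\infty$. On such a limit the bounded positive prefactor $2\,\myae_v(x_1,0)$ --- and, at a dyadic point, the single-factor discrepancy between the two expansions --- is invisible, so $L(x)=R(x)=\myae_\infty(x)$ whenever $\myae_\infty(x)$ exists, and neither one-sided limit exists otherwise. By Theorem~\ref{th:3}(2) this says precisely that $h'(x)$ exists iff $\myae_\infty(x)$ exists and that then $h'(x)=\myae_\infty(x)$. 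I expect the heart of the difficulty to be exactly this matching: the slopes honestly carry the surplus factor $2\,\myae_v(x_1,0)$, and the clean identity survives only because Theorem~\ref{th:3}(1) forces every attainable limit to be $0$ or $+\infty$, where a bounded prefactor cannot be seen; the remaining technical care goes into the two one-sided limits at the dyadic points, handled through the agreement of the tails of their binary expansions.
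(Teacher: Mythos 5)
Your proposal is correct, and its core is the same as the paper's: the paper likewise computes $h_n'$ on a cylinder as the quotient of matched cylinder lengths (Remarks~\ref{rem:8.1}--\ref{rem:8.4}, obtained by specializing the firm-carcass recursion of Remarks~\ref{rem:4.10} and~\ref{rem:4.20}, or directly via Lemma~\ref{lema:8.2}), arrives at the product formula for $h_n'(x)$ in terms of $2\,\myae_v(x_k,x_{k-1})$, and then simply declares that Theorem~\ref{th:4} follows from Theorem~\ref{th:3} and Remark~\ref{rem:8.4}. Where you genuinely add value is in supplying three justifications the paper leaves implicit, each of which is actually needed. First, Theorem~\ref{th:3}(2) applies only when $h$ is not piecewise linear; the paper never verifies this for the pair $(f,f_v)$, while your argument --- Proposition~\ref{prop:3} combined with the elementary fact that an increasing piecewise linear self-map of $[0,1]$ has graph length strictly less than $2$ --- closes that gap cleanly (and is non-circular, since Proposition~\ref{prop:3} is an external citation). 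Second, your bookkeeping of the surplus factor $2\,\myae_v(x_1,0)$ is more accurate than the paper's: a direct count (e.g.\ $h_2'$ equals $2v$ or $2(1-v)$ according to $x_1$ alone) shows the slope on the level-$n$ cylinder through $x$ is $\prod_{k=1}^{n-1}2\,\myae_v(x_k,x_{k-1})$ with $x_0=0$, so the product in Remark~\ref{rem:8.4}, which starts at $k=2$, silently misplaces a constant positive factor relative to $\myae_\infty(x)$; your observation that Theorem~\ref{th:3}(1) forces every attainable limit to be $0$ or $+\infty$, where a fixed positive factor is invisible, is precisely what makes the clean identity $h'(x)=\myae_\infty(x)$ survive as stated, and the paper nowhere makes this step explicit. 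Third, you handle dyadic points through the constant tails of the two binary expansions, which the paper delegates to the $L$, $R$ formalism and Lemma~\ref{lema:6.1}; your tail computation reaches the same conclusion. One cosmetic remark: you work with the orientation $h\circ f=f_v\circ h$ (binary digits on the domain side), whereas the displayed equation in Theorem~\ref{th:4} reads $f\circ h=h\circ f_v$; your choice is the one actually consistent with Remark~\ref{rem:8.4} and with the phrase ``binary expansion of $x$'', so this is a slip in the paper's statement rather than a defect of your proof.
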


We have obtained the partial cases of Theorem~\ref{th:4} in our
previous works. We have proved Theorem~\ref{th:4} for the binary
finite numbers in~\cite{Visnyk} and we also have proved
Theorem~\ref{th:4} for the rational numbers in~\cite{Studii}.

Notice, that Proposition~\ref{prop:2} is a simple corollary of
Theorem~\ref{th:4}.

Proposition~\ref{prop:3} can be generalized for the firm carcass
maps as follows.

\begin{theorem}\label{th:5}
Let $h$ be the conjugacy of any firm carcass maps. If $h$ is not
piecewise linear, then the length of the graph of $h$ equals $2$.
\end{theorem}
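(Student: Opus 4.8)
The plan is to show that a non-piecewise-linear conjugacy $h$ is necessarily a \emph{singular} function (continuous and increasing, with $h'=0$ almost everywhere) and then to compute directly that the graph of such a function has length $2$. I begin with the easy upper bound, which holds for \emph{every} admissible conjugacy. Since $h$ is an increasing homeomorphism of $[0,1]$ with $h(0)=0$ and $h(1)=1$, for any partition $0=t_0<t_1<\dots<t_N=1$ the inequality $\sqrt{a^2+b^2}\le a+b$ (valid for $a,b\ge 0$) together with telescoping gives
\[
\sum_{i=1}^N \sqrt{(t_i-t_{i-1})^2+(h(t_i)-h(t_{i-1}))^2}\;\le\;\sum_{i=1}^N\big((t_i-t_{i-1})+(h(t_i)-h(t_{i-1}))\big)\;=\;1+1\;=\;2 .
\]
Hence the length of the graph is at most $2$; this recovers the bound implicit in the remark after Proposition~\ref{prop:3}.

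Next I would establish that $h'=0$ almost everywhere. By Lebesgue's theorem the increasing function $h$ has a finite derivative outside a set of measure zero. At any point $x$ where $h'(x)$ exists and is finite we have $h'(x)\ge 0$ because $h$ is increasing; and $h'(x)$ cannot be strictly positive, since otherwise Theorem~\ref{th:3}(1) would force $h$ to be piecewise linear, contradicting the hypothesis of Theorem~\ref{th:5}. Therefore $h'(x)=0$ at almost every $x$, i.e. $h$ is singular.

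It remains to prove the matching lower bound. Since $h'=0$ a.e., the Lebesgue--Stieltjes measure $\mu=dh$ is singular with respect to Lebesgue measure $\lambda$: it is carried by a set $A$ with $\lambda(A)=0$, while $\mu([0,1])=h(1)-h(0)=1$. Fix $\varepsilon>0$. Using outer regularity of $\lambda$ and finiteness of $\mu$, I would choose a finite union $U$ of disjoint open subintervals of $[0,1]$ with $A\subset U$ up to $\mu$-mass $\varepsilon$, so that $\lambda(U)<\varepsilon$ and $\mu(U)>1-\varepsilon$. Taking the partition consisting of $0,\,1$ and the endpoints of the intervals of $U$, each resulting subinterval lies either inside $U$ or inside its complement. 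Bounding each polygonal term below by its vertical increment on the first group and by its horizontal increment on the second, the total polygonal length is at least
\[
\sum_{I\subset U}\Delta y+\sum_{I\subset U^{c}}\Delta x\;\ge\;\mu(U)+\lambda([0,1]\setminus U)\;>\;(1-\varepsilon)+(1-\varepsilon)\;=\;2-2\varepsilon .
\]
Letting $\varepsilon\to 0$ yields graph length $\ge 2$, and combined with the upper bound this gives length exactly $2$.

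The main obstacle I anticipate is the lower-bound step: one must arrange the covering $U$ to be a \emph{finite} union of intervals (so that its endpoints can be inserted into a genuine partition), while simultaneously keeping $\lambda(U)$ small and $\mu(U)$ close to $1$, and then argue that bounding the two families of segments — those across the intervals of $U$ by their vertical jumps and those across the gaps by their horizontal widths — loses nothing essential at the boundary subintervals. By comparison, the deduction that $h'=0$ a.e. (from Theorem~\ref{th:3}(1) and Lebesgue's theorem) and the upper bound are routine.
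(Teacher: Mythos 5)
Your proposal is correct, and its lower bound takes a genuinely different route from the paper's. The paper never passes through singularity of $h$: it works only at the $g_1$-finite points, where Lemma~\ref{lema:6.1} together with Theorem~\ref{th:3} shows that the chord slopes $d_n=\frac{h(\widehat{x}_n^{\,+})-h(\widehat{x}_n)}{\widehat{x}_n^{\,+}-\widehat{x}_n}$ tend either to $0$ or to $+\infty$; in either case the chord over $(\widehat{x}_n,\widehat{x}_n^{\,+})$ has length at least $(1-\varepsilon)$ times the sum of its horizontal and vertical increments. Covering $[0,1]$ by such intervals $I(x,\varepsilon)$, using that any two of these intervals are nested or essentially disjoint, and extracting a finite subcover by compactness yields a partition whose polygonal length exceeds $2-2\varepsilon$. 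You instead deduce from Theorem~\ref{th:3}(1) and Lebesgue's theorem that $h'=0$ almost everywhere, so the Stieltjes measure $dh$ is singular, and then run the classical covering argument (outer regularity of Lebesgue measure plus passing to a finite subfamily of component intervals) to reach the same $2-2\varepsilon$ bound; the telescoping upper bound via $\sqrt{a^2+b^2}\leq a+b$ is identical in both treatments. Your route is shorter, bypasses Lemma~\ref{lema:6.1} and the dyadic combinatorics of the intervals $I_{n,k}$ entirely, and in fact proves the more general statement that any increasing homeomorphism of $[0,1]$ having no point of positive finite derivative has graph length $2$; what it costs is genuine measure theory (the a.e.\ differentiation theorem, the identification of $h'$ with the density of the absolutely continuous part, regularity of measures), whereas the paper's argument is elementary and, along the way, records the extra information of Lemma~\ref{lema:6.1} that a non-piecewise-linear $h$ is differentiable, with derivative $0$ or $\infty$, at every $g_1$-finite point. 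The two obstacles you flag are indeed discharged exactly as you sketch: a finite subunion of the components of the open cover retains $\mu$-mass $>1-\varepsilon$ while the $\lambda$-measure only shrinks, and the boundary cells are harmless because every cell of the resulting partition is either the closure of a component of $U$ (chord bounded below by its vertical increment) or a gap (chord bounded below by its width). It is worth noting that your singular-mass dichotomy is close in spirit to the paper's alternative probabilistic proof of Lemma~\ref{lema:8.5} in Section~\ref{sec:7}, where the law of large numbers likewise splits $[0,1]$ into a set of small Lebesgue measure carrying almost all of the $dh$-mass and its complement.
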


Our work consists of eight sections except introduction. In
section~\ref{sec:2} we present some basic facts about topological
conjugacy and the tent map. Section~\ref{sec:3} is devoted to the
construction of the linear approximations of the conjugacy of
unimodal functions, which was obtained at first in the original
proof of Theorem~\ref{th:1} and we generalize this Theorem. We use
this approximation in our further reasonings. In
section~\ref{sec:4} we construct the generalization of the binary
expansion of a number, precisely for any $x\in [0,\, 1]$ and the
map $g$ of the form~\eqref{eq:1.2} we construct an infinite
sequence $\{ x_i,\, i\geq 1, x_i\in \{0; 1\}\}$, which determines
the number $x$. We also specify the properties of the obtained
sequence for firm carcass maps and prove Theorem~\ref{th:2}. In
Section~\ref{sec:5} we study the derivatives of the conjugacy of
firm carcass maps, precisely we prove Theorem~\ref{th:3} there. In
Section~\ref{sec:6} we prove Theorem~\ref{th:5}. In We devote
Section~\ref{sec:7} to the topological conjugacy of the tent map
and a skew tent map. We derive Theorem~\ref{th:4} from
Theorem~\ref{th:3} there give an alternative proof of
Theorem~\ref{th:5}. We state some hypothesis for the further
research in Section~\ref{sec:8}.

\section{Basic facts and properties}\label{sec:2}

\subsection{Basic facts about tent map}

We will state some, quite clear, properties of tent maps in his
section. These properties are almost evident, whenever they are
already formulated from one hand and, maybe, all of them are
mentioned in some text books on the Theory of Dynamical Systems as
illustrations, or examples. Nevertheless, we want to state these
properties explicitly, because we will generalize them later in
Section~\ref{sec:4} for an arbitrary unimodal map.

\begin{remark}\label{rem:2.1}
Notice that we can rewrite the formula~\eqref{eq:1.1} as $$ f(x) =
\left\{
\begin{array}{ll}
2x & \text{if }\, 0\leq x\leq 1/2,\\
2-2x & \text{if }\, 1/2\leq x\leq 1.
\end{array}\right.
$$
\end{remark}

Remark~\ref{rem:2.1} provides the rule to construct the binary
expansion of $f(x)$ by the binary expansion of $x\in [0, 1]$.

\begin{notation}\label{not:2.2}
Denote $\Rot(t) = 1-t$ for $t\in \{0; 1\}$.
\end{notation}

\begin{remark}\label{rem:2.3}
Let $f$ be the tent map and \begin{equation}\label{eq:2.1} x =
0.x_1x_2\ldots\, x_n\ldots\end{equation} %
be the binary expansion of an arbitrary $x\in [0,\, 1]$. Then the
binary expansion of $f(x)$ is
$$ f(x)= \left\{
\begin{array}{ll}
0.x_2x_3\ldots x_n\ldots, & \text{if }x_1 =
0,\\
0.\Rot(x_2)\Rot(x_3)\ldots \Rot(x_n)\ldots, & \text{if }x_1 = 1.
\end{array}\right.
$$
\end{remark}

Notice, that we understand~\eqref{eq:2.1} as
\begin{equation}\label{eq:2.2} x = \sum\limits_{i=1}^\infty
x_i2^{-i},
\end{equation} %
precisely we will understand~\eqref{eq:2.1} and~\eqref{eq:2.2} as
two different forms to write the same fact. We will
use~\eqref{eq:2.1} because we think that this form of
representation is more visual than~\eqref{eq:2.2}.

Remark~\ref{rem:2.3} provides the description of the set
$f^{-n}(0)$ for all $n\geq 1$.

\begin{remark}\label{rem:2.4}
For every $n> 1$ the set $\{ x<1:\, f^n(x) =0\}$ consists of all
the $x\in [0, 1]$ with the binary expansion $$ x =
0.x_1x_2\ldots\, x_{n-1}.
$$
\end{remark}

Next, Remark~\ref{rem:2.4} can be rewritten as follows.

\begin{remark}\label{rem:2.5}
For every $n\geq 1$ we have that $$f^{-n}(0) = \left\{
\frac{k}{2^{n-1}},\, 0\leq k\leq 2^{n-1}\right\}.$$
\end{remark}

\begin{remark}\label{rem:2.6} The graph of the $n$th iteration $f^n$
has the following properties:

1. $f^n(0)=0$, i.e. the graph passes through origin.

2. The graph consists of $2^n$ line segments, whose tangents are
either $2^n$, or $-2^n$.

3. Each maximal part of monotonicity of $f^n$ connects the line
$y=0$ and $y=1$.

4. Let $x_1,\, x_2$ be such that $\{ f^n(x_1);\, f^n(x_2)\} =
\{0;\, 1\}$ and $f^n$ is monotone on $[x_1,\, x_2]$. Then
$f^{n+1}(x_1) = f^{n+1}(x_2) = 0$. Precisely, for $x_3 =
\frac{x_1+x_2}{2}$ we have that $f^{n+1}(x_3) = 1$ and, moreover,
$f^{n+1}$ increase on $[x_1,\, x_3]$ and decrease on $[x_3,\,
x_2]$, being linear at each of these intervals.
\end{remark}

\subsection{Properties of topological conjugation}

The change of coordinates is one of the classical illustration
(explanation) of what topological conjugation is. Thus, it
preserves a lot of properties of maps and points. From another
hand, if topologically conjugated maps are ``similar'' in some
sense, then local properties of the conjugacy can be globalized.
Moreover, this globalization still holds in the case, when we are
talking about semi conjugation.

\begin{lemma}
Let $g_1$ and $g_2$ be unimodal maps and let a solution $h$
of~\eqref{eq:1.5} be continuous. Then:

1. For any fixed point $x$ of $g_1$, the point $h(x)$ is a fixed
point of $g_2$.

2. If $h$ is invertible then for any periodical point $x$ of $g_1$
of period $n$, the point $h(x)$ is periodical point of $g_2$ of
period $n$.
\end{lemma}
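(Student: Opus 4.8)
The plan is to prove the two parts of the lemma separately, using the functional equation $h\circ g_1 = g_2\circ h$ as the central algebraic tool. For part~1, suppose $x$ is a fixed point of $g_1$, so $g_1(x)=x$. Evaluating the functional equation at $x$ gives $h(g_1(x)) = g_2(h(x))$, and since $g_1(x)=x$ the left-hand side is simply $h(x)$. Thus $g_2(h(x)) = h(x)$, which is exactly the statement that $h(x)$ is a fixed point of $g_2$. This part requires only the conjugation identity and the hypothesis $g_1(x)=x$; continuity of $h$ is not even strictly needed here, only that $h$ solves~\eqref{eq:1.5}.

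For part~2, the first step is to establish by induction the iterated version of the functional equation, namely $h\circ g_1^{\,n} = g_2^{\,n}\circ h$ for every $n\geq 1$. The base case $n=1$ is~\eqref{eq:1.5}, and the inductive step follows by composing: $h\circ g_1^{\,n+1} = (h\circ g_1^{\,n})\circ g_1 = (g_2^{\,n}\circ h)\circ g_1 = g_2^{\,n}\circ(h\circ g_1) = g_2^{\,n}\circ g_2\circ h = g_2^{\,n+1}\circ h$. Now suppose $x$ is periodic of period $n$ for $g_1$, so $g_1^{\,n}(x)=x$ while $g_1^{\,k}(x)\neq x$ for $0<k<n$. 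Applying the iterated identity at $x$ yields $g_2^{\,n}(h(x)) = h(g_1^{\,n}(x)) = h(x)$, so $h(x)$ is a periodic point of $g_2$ whose period divides $n$.

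It remains to rule out a smaller period, and this is where the invertibility hypothesis on $h$ is essential. Suppose the period of $h(x)$ under $g_2$ were some $m$ with $0<m<n$ and $m\mid n$. Then $g_2^{\,m}(h(x))=h(x)$, and by the iterated identity $h(g_1^{\,m}(x)) = g_2^{\,m}(h(x)) = h(x)$. Since $h$ is invertible, hence injective, we may cancel $h$ to conclude $g_1^{\,m}(x)=x$, contradicting the minimality of $n$ as the period of $x$. Therefore the period of $h(x)$ is exactly $n$. I expect no serious obstacle in this argument; the only point demanding care is the direction of the cancellation, which is precisely why injectivity (a consequence of invertibility) is invoked rather than mere continuity. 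The whole proof is a clean formal manipulation of the conjugacy relation together with one induction.
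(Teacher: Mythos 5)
Your proof is correct. The paper states this lemma without any proof, treating it as an evident consequence of the conjugacy relation; your argument --- evaluating $h\circ g_1 = g_2\circ h$ at the fixed point, establishing $h\circ g_1^{\,n} = g_2^{\,n}\circ h$ by induction, and using injectivity of $h$ to rule out a smaller period --- is exactly the standard reasoning the paper leaves implicit, and your write-up fills that omission correctly (including the accurate observation that continuity is not needed and invertibility is only required for part~2).
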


The next fact us quite technical, but is important for the further
corollaries.

\begin{lemma}\label{lema:2.8}
Suppose that $h$ is a continuous solution of~\eqref{eq:1.5}. Let
$a, b\in [0,\, 1]$ be such that $g_1$ and $g_2$ are linear on $[a,
b]$ and $[h(a), h(b)]$ respectively. Then for every $c\in (a, b)$
$$ \frac{h(c)-h(a)}{c-a}\cdot \frac{b-a}{h(b)-h(a)} =
\frac{(h\circ g_1)(c)-(h\circ g_1)(a)}{g_1(c) -g_1(a)}\cdot
\frac{g_1(b) -g_1(a)}{(h\circ g_1)(b)-(h\circ g_1)(a)}
$$ holds.
\end{lemma}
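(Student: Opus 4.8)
The plan is to prove the identity by a direct computation resting on two ingredients: the functional equation~\eqref{eq:1.5}, which converts the quotients involving $h\circ g_1$ into quotients involving $g_2\circ h$, and the elementary fact that an affine map preserves ratios of differences of collinear points. First I would invoke~\eqref{eq:1.5} to replace $(h\circ g_1)(x)$ by $g_2(h(x))$ for $x\in\{a,b,c\}$, so that the right-hand side of the claim takes the form
$$
\frac{g_2(h(c))-g_2(h(a))}{g_1(c)-g_1(a)}\cdot\frac{g_1(b)-g_1(a)}{g_2(h(b))-g_2(h(a))}.
$$

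Next I would record the affine-ratio identities for $g_1$ and $g_2$ separately. Since $g_1$ is linear on $[a,b]$, writing $g_1(x)=\alpha x+\beta$ there gives $g_1(c)-g_1(a)=\alpha(c-a)$ and $g_1(b)-g_1(a)=\alpha(b-a)$, whence $\dfrac{g_1(b)-g_1(a)}{g_1(c)-g_1(a)}=\dfrac{b-a}{c-a}$, the common slope $\alpha$ cancelling. Before doing the same for $g_2$, I would observe that, as $h$ is an increasing conjugacy, $a<c<b$ forces $h(a)<h(c)<h(b)$, so $h(c)$ lies in the interval $[h(a),h(b)]$ on which $g_2$ is assumed linear; writing $g_2(y)=\gamma y+\delta$ on that interval yields $\dfrac{g_2(h(c))-g_2(h(a))}{g_2(h(b))-g_2(h(a))}=\dfrac{h(c)-h(a)}{h(b)-h(a)}$.

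Finally I would substitute these two ratios into the rewritten right-hand side and regroup the four factors, obtaining
$$
\frac{h(c)-h(a)}{h(b)-h(a)}\cdot\frac{b-a}{c-a},
$$
which coincides with the left-hand side after a trivial rearrangement. There is no genuine analytic obstacle here; the points needing care are purely bookkeeping ones: confirming that $h(c)\in[h(a),h(b)]$ so that the linearity of $g_2$ may legitimately be applied at $h(c)$ (this is exactly where the monotonicity of $h$ is used), and checking that no denominator vanishes. The latter holds because each linear piece of a unimodal map is strictly monotone, so the slopes $\alpha,\gamma$ are nonzero, as are the differences $c-a$ and $h(b)-h(a)$.
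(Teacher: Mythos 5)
Your proposal is correct and follows essentially the same route as the paper's own proof: apply~\eqref{eq:1.5} to replace $h\circ g_1$ by $g_2\circ h$, then use the linearity of $g_1$ on $[a,b]$ and of $g_2$ on $[h(a),h(b)]$ so that the two slopes cancel in the ratios. Your explicit verification that $h(c)\in[h(a),h(b)]$ and that no denominator vanishes is left implicit in the paper, but this is bookkeeping rather than a difference in method.
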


\begin{proof}
It follows from~\eqref{eq:1.5} that $(h\circ g_1)(c)-(h\circ
g_1)(a) = (g_2\circ h)(c)-(g_2\circ h)(a)$, and, by linearity of
$g_2$ on $[h(a), h(b)]$, $(g_2\circ h)(c)-(g_2\circ h)(a) =
g_2'(c)\cdot (h(c)-h(a))$, whence $$ (h\circ g_1)(c)-(h\circ
g_1)(a) = g_2'(c)\cdot (h(c)-h(a)).
$$ Analogously, $$(h\circ g_1)(b)-(h\circ
g_1)(a) = g_2'(c)\cdot (h(b)-h(a)).
$$

By linearity of $g_1$ on $[a, b]$ obtain $$ g_1(c) -g_1(a) =
g_1'(c)\cdot (c-a)
$$ and $$
g_1(b) -g_1(a) = g_1'(c)\cdot (b-a).
$$

Now, $$ \frac{(h\circ g_1)(c)-(h\circ g_1)(a)}{g_1(c)
-g_1(a)}\cdot \frac{g_1(b) -g_1(a)}{(h\circ g_1)(b)-(h\circ
g_1)(a)} =$$ $$= \frac{g_2'(c)\cdot (h(c)-h(a))}{g_1'(c)\cdot
(c-a)}\cdot \frac{g_1'(c)\cdot (b-a)}{g_2'(c)\cdot (h(b)-h(a))}
$$ and we are done.
\end{proof}

The next lemma follows from Lemma~\ref{lema:2.8}.

\begin{lemma}\label{lema:2.9}
Let $g_1$ and $g_2$ be carcass maps and let solution $h$
of~\eqref{eq:1.5} be continuous. Then:

1. If $h$ is constant on some interval, then $h$ is constant in
the entire $[0, 1]$.

2. If $h$ is linear on some interval, then $h$ is piecewise linear
in the entire $[0, 1]$.

3. If $h$ is differentiable on some interval, then $h$ in
piecewise differentiable on the entire $[0, 1]$ (i.e. $h$ is
differentiable everywhere on $[0, 1]$ except, possibly, finitely
many points).
\end{lemma}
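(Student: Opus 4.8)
The plan is to derive all three parts of Lemma~\ref{lema:2.9} from the ``propagation'' identity of Lemma~\ref{lema:2.8} together with the combinatorial structure of carcass maps. The central observation is that Lemma~\ref{lema:2.8} lets me transport a linearity ratio from an interval $[a,b]$ to the interval $[g_1(a), g_1(b)]$: the cross-ratio-type quantity measuring how far $h$ deviates from linear on $[a,b]$ equals the corresponding quantity on the image interval, provided $g_1$ is linear (i.e. monotone without a kink) on $[a,b]$ and $g_2$ is linear on $[h(a), h(b)]$. Since $g_1$ and $g_2$ are carcass maps, they have only finitely many kinks, so any sufficiently small interval is mapped linearly; this is what will let me spread a local property across $[0,1]$.

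For part~1, suppose $h$ is constant on some nondegenerate interval $J$. I would first note that $h$ constant on $J$ means $h(J)$ is a single point. The idea is to iterate: because $g_1$ is piecewise linear and (by Theorem~\ref{th:2}) the complete pre-image of $0$ is dense, the forward images $g_1^n(J)$ eventually cover intervals that, together with monotonicity of $h$, force $h$ to be constant on larger and larger sets. More directly, I would use Lemma~\ref{lema:2.8}: if $h(c)=h(a)$ for the endpoints' interpolation, the left-hand ratio is $0$, so the right-hand ratio is $0$, meaning $h$ is also constant on the image interval $g_1([a,b])$. Since $g_1$ maps small subintervals of $J$ onto subintervals whose union of forward iterates is dense (and $h$ is monotone hence constant-on-an-interval propagates to a dense union of intervals), continuity and monotonicity of $h$ upgrade ``constant on a dense open set'' to ``constant on $[0,1]$.''

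Parts~2 and~3 follow the same template but tracking a different invariant. For part~2, if $h$ is linear on some interval $I$ (graph a genuine line segment), then on $I$ the deviation-from-linearity ratio in Lemma~\ref{lema:2.8} is the trivial value corresponding to exact linearity, and the identity propagates this to each image interval $g_1^n(I)$, as long as we subdivide $I$ finely enough that each piece avoids kinks of the relevant iterate. Because $g_1^{-\infty}(0)$ is dense, the forward orbit of $I$ covers $[0,1]$ up to finitely many kink-points at each stage, so $h$ is linear on each piece of a finite partition of $[0,1]$ determined by the kinks of the iterates involved; that is exactly piecewise linearity. For part~3, I replace ``linear'' by ``differentiable'': Lemma~\ref{lema:2.8} shows that differentiability of $h$ at interior points transports along $g_1$ (the image-interval ratios converge iff the source ratios do, since the two sides are equal), and the finitely-many-kinks structure guarantees only finitely many exceptional points survive on $[0,1]$.

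The main obstacle I expect is making the ``propagate a local property to the whole interval via density of $g_1^{-\infty}(0)$'' step fully rigorous, rather than heuristic. Two technical points need care: first, ensuring that when I subdivide $I$ (or $J$) so that $g_1$ has no kink on each subinterval and $g_2$ has no kink on the corresponding image, I can still apply Lemma~\ref{lema:2.8} on each piece and glue the conclusions; second, controlling the finitely many kink-points of the iterates $g_1^n$ as $n$ grows, so that the exceptional set in part~3 really stays finite on $[0,1]$ rather than accumulating. I would handle both by arguing on a single step $g_1$ at a time (so only the finitely many kinks of $g_1$ and $g_2$ enter each application) and using the surjectivity of $g_1$ on each lap together with monotonicity and continuity of $h$ to close the induction.
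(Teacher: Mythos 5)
Your overall strategy --- propagate a local property of $h$ along $g_1$ using the invariance identity of Lemma~\ref{lema:2.8} and the finiteness of the kink sets --- is the right reading of the paper's intent (the paper itself offers no argument beyond the remark that the lemma follows from Lemma~\ref{lema:2.8}). But your proposal has one load-bearing gap: every one of your three arguments ultimately rests on the claim that the forward images $g_1^n(J)$ of an arbitrary nondegenerate interval $J$ eventually cover $[0,1]$ (or at least a dense union of intervals), and you justify this by citing the density of $g_1^{-\infty}(0)$, i.e.\ Theorem~\ref{th:2}. That theorem is proved only for \emph{firm} carcass maps, whereas Lemma~\ref{lema:2.9} is stated for arbitrary carcass maps, so density is not among your hypotheses --- and it genuinely fails in this generality. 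Take $g_1$ with $g_1(x)=x/2$ on $[0,\,1/2]$, rising linearly to $1$ at $v=3/4$ and falling to $0$ at $1$: every point of $(0,\,1/4)$ satisfies $g_1^n(x)=x/2^n>0$ for all $n$, so $g_1^{-\infty}(0)$ misses $(0,\,1/4)$ entirely, and for $J\subseteq(0,\,1/4)$ the images $g_1^n(J)$ shrink to $\{0\}$ instead of spreading. So your mechanism cannot work as stated; at best it proves the lemma for firm carcass maps, and even there the covering property does not follow from density of $g_1^{-\infty}(0)$ by mere assertion --- you would need an extra step (e.g.\ that density forces expansion at $0$, or that Ulam's theorem conjugates $g_1$ to the tent map, which is locally eventually onto, and that this covering property is preserved by conjugation).

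A secondary but real defect is in part~1: Lemma~\ref{lema:2.8} simply cannot be applied when $h$ is constant on $[a,b]$, because the identity contains the factors $\frac{b-a}{h(b)-h(a)}$ and $\frac{g_1(b)-g_1(a)}{(h\circ g_1)(b)-(h\circ g_1)(a)}$, whose denominators vanish; your ``left ratio is $0$, hence right ratio is $0$'' is a $0\cdot\infty$ manipulation. The correct route is direct from~\eqref{eq:1.5}: if $h$ is constant on $J$ with value $q$, then $h\circ g_1=g_2\circ h$ is constant on $J$, so $h$ is constant on $g_1(J)$; and on any connected component $C$ of $g_1^{-1}(J)$ one has $h(C)\subseteq g_2^{-1}(q)$, a finite set, so $h$ is constant on $C$ by continuity and connectedness --- no divided differences needed. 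Finally, note that you repeatedly invoke monotonicity of $h$, which is not a hypothesis here: the lemma assumes only a continuous solution of~\eqref{eq:1.5}, not an invertible one. Constancy on a dense set plus continuity rescues the place where you use it in part~1, but in parts~2 and~3 your subdivision step (choosing pieces of $J$ whose $h$-images avoid the kinks of $g_2$) tacitly uses that $h$ carries intervals to intervals in an order-controlled way, which needs separate care without monotonicity.
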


\section{The Stanislaw Ulam's construction}\label{sec:3}

We will generalize Theorem~\ref{th:1} in this section. We will
also introduce some notations during the proof, which are
necessary for our further reasonings. The construction of the
sequence $\{ h_n,\, n\geq 1\}$ below is the simple generalization
of the proof of Theorem~\ref{th:1}, given in~\cite{Ulam-1964-a}.

The following remark is the generalization of
Remark~\ref{rem:2.6}.

\begin{remark}\label{rem:3.1}
The graph of the $n$th iteration $g^n$ of an arbitrary unimodal
function $g$ has the following properties:

1. The graph consists of $2^n$ monotone curves.

2. Each maximal part of monotonicity of $g^n$ connects the line
$y=0$ and $y=1$.

3. If $x_1,\, x_2$ are such that $\{ g^n(x_1);\, g^n(x_2)\} =
\{0;\, 1\}$ and $g^n$ is monotone on $[x_1,\, x_2]$, then
$g^{n+1}(x_1) = g^{n+1}(x_2) = 0$ and there is $x_3\in (x_1,\,
x_2)$ such that $g^{n+1}(x_3) = 1$. Moreover in this case
$g^{n+1}$ increase on $[x_1,\, x_3]$ and decrease on $[x_3,\,
x_2]$.
\end{remark}

Since, by Remark~\ref{rem:3.1}, the set $g^{-n}(0)$ consists of
$2^{n-1}+1$ points, then the notation follows.

\begin{notation}
For every unimodal map $g:\, [0,\, 1]\rightarrow [0,\, 1]$ and for
every $n\geq 1$ denote $\{ \mu_{n,k}(g),\, 0\leq k\leq 2^{n-1}\}$
such that $g^n(\mu_{n,k}(g))=0$ and $\mu_{n,k}(g)<\mu_{n,k+1}(g)$
for all $k$.
\end{notation}

\begin{remark}\label{rem:3.3}
Notice that $\mu_{n,k}(g) = \mu_{n+1,2k}(g)$ for all $k,\, 0\leq k
\leq 2^{n-1}$.
\end{remark}

\begin{calcul}\label{calc:3.4}
For each unimodal map $g$, every $n\geq 2$ and $k,\, 0\leq k\leq
2^{n-2}$ the equalities
\begin{equation}\label{eq:3.1} g(\, \mu_{n,k}(g)\,
)=\mu_{n-1,k}(g)\end{equation} and
\begin{equation}\label{eq:3.2}g(\, \mu_{n,k}(g)\, )= g(\,
\mu_{n,2^{n-1}-k}(g)\, )
\end{equation} hold.
\end{calcul}

For every two unimodal maps $g_1,\, g_2:\, [0,\, 1]\rightarrow
[0,\, 1]$ and every $n\in \mathbb{N}$ define the map
$\widehat{h}_n:\, g_1^{-n}(0)\rightarrow g_2^{-n}(0)$ by
\begin{equation}\label{eq:3.3}
\widehat{h}_n(\mu_{n,k}(g_1)) =\mu_{n,k}(g_2)\end{equation} for
all $k,\, 0\leq k\leq 2^{n-1}$.

\begin{lemma}\label{lema:3.5}
For every maps $g_1,\, g_2:\, [0,\, 1]\rightarrow [0,\, 1]$ the
map $\widehat{h}_n:\, g_1^{-n}(0)\rightarrow g_2^{-n}(0)$, defined
by~\eqref{eq:3.3}, satisfies the equation
\begin{equation}\label{eq:3.4} \widehat{h}_n\circ g_1 = g_2\circ
\widehat{h}_n.
\end{equation}
\end{lemma}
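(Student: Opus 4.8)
The plan is to verify the functional equation~\eqref{eq:3.4} pointwise on the finite set $g_1^{-n}(0)$, exploiting the fact that the combinatorial action of a unimodal map on its indexed zero-preimages, as recorded in Calculation~\ref{calc:3.4}, is the \emph{same} for $g_1$ and $g_2$. Since $\widehat h_n$ is by~\eqref{eq:3.3} precisely the index-preserving bijection $\mu_{n,k}(g_1)\mapsto\mu_{n,k}(g_2)$, this map-independence of the index formulas is what forces the equality. Thus it suffices to show, for every $k$ with $0\leq k\leq 2^{n-1}$, that $\widehat h_n(\,g_1(\mu_{n,k}(g_1))\,)=g_2(\,\widehat h_n(\mu_{n,k}(g_1))\,)=g_2(\mu_{n,k}(g_2))$.

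First I would settle well-definedness. Because $g_1(0)=0$ for a unimodal map, any $y$ with $g_1^{n-1}(y)=0$ satisfies $g_1^n(y)=g_1(0)=0$, so $g_1^{-(n-1)}(0)\subseteq g_1^{-n}(0)$; in particular $g_1(\mu_{n,k}(g_1))\in g_1^{-(n-1)}(0)\subseteq g_1^{-n}(0)$ lies in the domain of $\widehat h_n$, and both composites in~\eqref{eq:3.4} make sense on $g_1^{-n}(0)$. Next I would record the reduction furnished by Remark~\ref{rem:3.3}: since $\mu_{n-1,k}(g)=\mu_{n,2k}(g)$, the restriction of $\widehat h_n$ to $g_1^{-(n-1)}(0)$ sends $\mu_{n-1,k}(g_1)=\mu_{n,2k}(g_1)$ to $\mu_{n,2k}(g_2)=\mu_{n-1,k}(g_2)$, i.e.\ $\widehat h_n$ coincides on $g_1^{-(n-1)}(0)$ with the index map at level $n-1$. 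This lets me compute $\widehat h_n$ on any value of the form $\mu_{n-1,k}(g_1)$ purely at the level of indices.

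The computation then splits into two cases. For $0\leq k\leq 2^{n-2}$, equation~\eqref{eq:3.1} gives $g_1(\mu_{n,k}(g_1))=\mu_{n-1,k}(g_1)$, whence $\widehat h_n(g_1(\mu_{n,k}(g_1)))=\mu_{n-1,k}(g_2)$ by the reduction above, while~\eqref{eq:3.1} applied to $g_2$ gives $g_2(\mu_{n,k}(g_2))=\mu_{n-1,k}(g_2)$; the two sides agree. For $2^{n-2}<k\leq 2^{n-1}$, I set $k'=2^{n-1}-k$, so $0\leq k'<2^{n-2}$, and use the symmetry~\eqref{eq:3.2} (applied to $k'$) to write $g_1(\mu_{n,k}(g_1))=g_1(\mu_{n,k'}(g_1))$ and likewise $g_2(\mu_{n,k}(g_2))=g_2(\mu_{n,k'}(g_2))$; this reduces the upper range to the already-settled first case with index $k'$, both sides equaling $\mu_{n-1,k'}(g_2)$. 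The degenerate case $n=1$ is trivial, as $g_1^{-1}(0)=\{0,1\}$ is fixed (setwise) with $\widehat h_1(0)=0$, $\widehat h_1(1)=1$, and $g_i(0)=g_i(1)=0$.

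The only genuine difficulty is bookkeeping: keeping the reflection $k\mapsto 2^{n-1}-k$ of~\eqref{eq:3.2} and the doubling $k\mapsto 2k$ of Remark~\ref{rem:3.3} straight, and confirming that the ranges of validity of~\eqref{eq:3.1} and~\eqref{eq:3.2} exactly tile $\{0,\dots,2^{n-1}\}$ once the reflection is used. The conceptual point that makes everything go through is that Calculation~\ref{calc:3.4} holds uniformly in the unimodal map $g$, so the identical index arithmetic governs $g_1$ and $g_2$; intertwining it with the index-preserving $\widehat h_n$ yields~\eqref{eq:3.4} with no appeal to the specific shapes of $g_1$ or $g_2$.
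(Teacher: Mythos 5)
Your proof is correct and follows essentially the same route as the paper: a pointwise verification on $g_1^{-n}(0)$ using the two identities~\eqref{eq:3.1} and~\eqref{eq:3.2} of Calculation~\ref{calc:3.4} (the first for indices $k\leq 2^{n-2}$, the second, via the reflection $k\mapsto 2^{n-1}-k$, for the rest), combined with the index-doubling of Remark~\ref{rem:3.3} to evaluate $\widehat h_n$ on level-$(n-1)$ points. Your added checks (well-definedness via $g_1^{-(n-1)}(0)\subseteq g_1^{-n}(0)$ and the degenerate case $n=1$, needed since Calculation~\ref{calc:3.4} assumes $n\geq 2$) are tidier bookkeeping than the paper's displayed chains~\eqref{eq:3.5}--\eqref{eq:3.6}, but the argument is the same.
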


\begin{proof}
By Calculation~\ref{calc:3.4}, for every $k,\, 0\leq k\leq
2^{n-2}$ it follows from~\eqref{eq:3.1} that
\begin{equation}\label{eq:3.5}\left\{ \begin{array}{l}
\widehat{h}_n(\mu_{n,k}(g_1)) = \widehat{h}_n(\mu_{n-1,2k}(g_1)) =
\mu_{n-1,2k}(g_2)\\
g_2(\widehat{h}_n(\mu_{n,k}(g_1)\, )) = g_2(\, \mu_{n,k}(g_2)\, )
= \mu_{n-1,2k}(g_2),
\end{array}\right.\end{equation} and it follows from~\eqref{eq:3.2}
that \begin{equation}\label{eq:3.6}\left\{ \begin{array}{l}
\widehat{h}_n(\mu_{n,2^{n-1}-k}(g_1)) =
\widehat{h}_n(\mu_{n-1,2k}(g_1)) =
\mu_{n-1,2k}(g_2)\\
g_2(\widehat{h}_n(\mu_{n,2^{n-1}-k}(g_1)\, )) = g_2(\,
\mu_{n,k}(g_2)\, ) = \mu_{n-1,2k}(g_2).
\end{array}\right.\end{equation}

Now~\eqref{eq:3.5} and~\eqref{eq:3.6} imply~\eqref{eq:3.4}.
\end{proof}

\begin{lemma}\label{lema:3.6}
Suppose that unimodal maps $g_1,\, g_2:\, [0,\, 1]\rightarrow
[0,\, 1]$ are topologically conjugated and $h:\, [0,\,
1]\rightarrow [0,\, 1]$ is the conjugacy such that~\eqref{eq:1.5}
holds. Then
\begin{equation}\label{eq:3.7} h(\mu_{n,k}(g_1)) =\mu_{n,k}(g_2)
\end{equation} for all $n\geq 1$ and $k,\, 0\leq k\leq 2^{n-1}$.
\end{lemma}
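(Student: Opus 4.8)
The plan is to exploit the fact that a conjugacy is an order-preserving homeomorphism that carries the complete pre-image of $0$ under $g_1$ onto that under $g_2$, and then to match the two ordered finite sets index by index.

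First I would fix the direction of $h$. Since $g_1(0)=g_1(1)=0$ and $g_2(0)=g_2(1)=0$, and $h$ is a homeomorphism of $[0,1]$, it sends $\{0,1\}$ to $\{0,1\}$. If $h$ were decreasing, then $h(0)=1$, and evaluating~\eqref{eq:1.5} at $x=0$ would give $h(g_1(0))=h(0)=1$ while $g_2(h(0))=g_2(1)=0$, a contradiction. Hence $h$ is increasing and $h(0)=0$, $h(1)=1$ (this also agrees with the first lemma of Section~\ref{sec:2}, since $0$ is a fixed point of both maps and $h$ carries fixed points to fixed points).

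Next I would show that $h(g_1^{-n}(0))=g_2^{-n}(0)$ for every $n\geq 1$. Iterating~\eqref{eq:1.5} gives $h\circ g_1^n = g_2^n\circ h$, so if $g_1^n(x)=0$ then $g_2^n(h(x))=h(g_1^n(x))=h(0)=0$; thus $h(g_1^{-n}(0))\subseteq g_2^{-n}(0)$. The inverse $h^{-1}$ satisfies $h^{-1}\circ g_2=g_1\circ h^{-1}$ and is again an increasing homeomorphism fixing $0$, so the same argument yields $h^{-1}(g_2^{-n}(0))\subseteq g_1^{-n}(0)$; combining the two inclusions gives equality. By Remark~\ref{rem:3.1} both sets consist of exactly $2^{n-1}+1$ points, so $h$ restricts to a bijection between them.

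Finally, since $h$ is strictly increasing, it maps the increasingly ordered list $\mu_{n,0}(g_1)<\dots<\mu_{n,2^{n-1}}(g_1)$ onto the increasingly ordered list $\mu_{n,0}(g_2)<\dots<\mu_{n,2^{n-1}}(g_2)$ while preserving order; an order-preserving bijection between two finite ordered sets of equal size matches the $k$-th element to the $k$-th element, which is exactly~\eqref{eq:3.7}. The only delicate point is pinning down the direction and endpoint values of $h$; once $h$ is known to be increasing with $h(0)=0$, the conclusion follows directly from monotonicity together with the conjugacy relation, with no further computation required.
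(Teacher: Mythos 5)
Your proposal is correct and follows essentially the same route as the paper's proof: fix $h(0)=0$ so that $h$ increases, iterate the conjugacy relation to get $h\circ g_1^n=g_2^n\circ h$, conclude $h(g_1^{-n}(0))=g_2^{-n}(0)$, and match the ordered points index by index. You merely make explicit two steps the paper leaves implicit (the reverse inclusion via $h^{-1}$ and the order-preserving matching of the $2^{n-1}+1$ points), and you rule out the decreasing case by a direct contradiction at $x=0$ rather than the paper's fixed-point argument for $h(0)$ --- a cosmetic difference only.
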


\begin{proof}
By~\eqref{eq:1.5}, the number $h(0)$ is a fixed point of $g_2$.
Since $h(0)\in \{0;\, 1\}$, then $h(0)=0$. Thus, $h$ increase. It
follows by induction on $n$ from~\eqref{eq:1.5} that $ h\circ
g_1^n = g_2^n\circ h $ for all $n\geq 1$. The obtained equality
and $h(0)=0$ imply that $h(\, g_1^{-n}(0)\, ) = g_2^{-n}(0)$,
whence~\eqref{eq:3.7} follows.
\end{proof}

The next theorem is the generalization of Theorem~\ref{th:1}.

\begin{theorem}[Ulam's Theorem]
Let $g_1,\, g_2:\, [0,\, 1]\rightarrow [0,\, 1]$ be unimodal maps
and suppose that $g_1^{-\infty}(0)$ is dense in $[0,\, 1]$. Then
$g_1$ and $g_2$ are topologically conjugated if and only if
$g_2^{-\infty}(0)$ is dense in $[0,\, 1]$. Moreover, in this case
the conjugacy is unique.
\end{theorem}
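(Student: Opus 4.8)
The plan is to prove both implications together with uniqueness by working with the finite models $\widehat{h}_n$ from~\eqref{eq:3.3} and passing to a limit along the dense sets. For the ``if'' direction, assume $g_2^{-\infty}(0)$ is dense. I would first define a map $h$ on the dense set $g_1^{-\infty}(0)$ by $h(\mu_{n,k}(g_1)) = \mu_{n,k}(g_2)$, checking that it is well defined by means of the nesting relation $\mu_{n,k}(g) = \mu_{n+1,2k}(g)$ of Remark~\ref{rem:3.3}: if a point of $g_1^{-\infty}(0)$ carries two labels, the nesting forces the two candidate values in $g_2^{-\infty}(0)$ to coincide. Since the indices $k$ are assigned in increasing order of the points, $h$ is an order-preserving bijection of $g_1^{-\infty}(0)$ onto $g_2^{-\infty}(0)$ carrying $0\mapsto 0$ and $1\mapsto 1$.

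Next I would extend $h$ to all of $[0,\,1]$ by $h(x) = \sup\{ h(t):\, t\in g_1^{-\infty}(0),\ t\le x\}$. This extension is visibly nondecreasing and recovers the original values on $g_1^{-\infty}(0)$. The heart of the argument is showing that it is a homeomorphism, and this is exactly where both density hypotheses enter. Strict monotonicity follows from the density of $g_1^{-\infty}(0)$: given $x<y$ one inserts points of that set strictly between them and uses that $h$ already separates those points. Continuity follows from the density of $g_2^{-\infty}(0)$: a nondecreasing function can fail to be continuous only by skipping an open interval of values, yet the range of $h$ contains the dense set $g_2^{-\infty}(0)$, so no value is skipped. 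Being a strictly increasing continuous map with $h(0)=0$ and $h(1)=1$, it is a homeomorphism of $[0,\,1]$.

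It then remains to verify the functional equation~\eqref{eq:1.5}. On $g_1^{-\infty}(0)$ it holds pointwise: for $x\in g_1^{-n}(0)$ one has $g_1(x)\in g_1^{-(n-1)}(0)\subseteq g_1^{-\infty}(0)$, and Lemma~\ref{lema:3.5} gives $h(g_1(x)) = \widehat{h}_n(g_1(x)) = g_2(\widehat{h}_n(x)) = g_2(h(x))$. Since $h\circ g_1$ and $g_2\circ h$ are continuous and coincide on the dense set $g_1^{-\infty}(0)$, they coincide everywhere, so $h$ is a conjugacy.

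For the ``only if'' direction, assume $g_1$ and $g_2$ are conjugated by $h$. By Lemma~\ref{lema:3.6} we have $h(g_1^{-\infty}(0)) = g_2^{-\infty}(0)$; since $h$ is a homeomorphism it preserves density, so $g_2^{-\infty}(0)$ is dense. For uniqueness, any two conjugacies $h_1,\, h_2$ satisfy $h_i(\mu_{n,k}(g_1)) = \mu_{n,k}(g_2)$ for all $n,\,k$ by Lemma~\ref{lema:3.6}, hence agree on the dense set $g_1^{-\infty}(0)$; being continuous, they are equal. I expect the main obstacle to be the homeomorphism step of the ``if'' direction, namely extracting strict monotonicity and continuity of the extension simultaneously from the two density assumptions.
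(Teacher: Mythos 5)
Your proof is correct and follows essentially the same route as the paper: both define the conjugacy on the dense lattice $g_1^{-\infty}(0)$ via the assignment~\eqref{eq:3.3}, extend it to $[0,\,1]$ (you by a supremum, the paper by a pointwise limit of the interpolants $h_n$ --- a cosmetic difference), derive continuity and strict monotonicity from the two density hypotheses, verify~\eqref{eq:1.5} on the dense set via Lemma~\ref{lema:3.5}, and settle the converse and uniqueness with Lemma~\ref{lema:3.6}. If anything, your write-up makes explicit several steps (well-definedness via Remark~\ref{rem:3.3}, strict monotonicity, and the uniqueness argument) that the paper leaves implicit.
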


\begin{proof}
For every $n\geq 1$ define $\widehat{h}_n:\,
g_1^{-n}(0)\rightarrow g_2^{-n}(0)$ by~\eqref{eq:3.3}.

Suppose that $h$ is the topological conjugacy of $g_1$ and $g_2$.
By Lemma~\ref{lema:3.6} the map $h$ coincides with $\widehat{h}_n$
on the domain of $\widehat{h}_n$, thence the density of
$g_2^{-\infty}(0)$ follows from the continuity of $h$.

Now assume that $g_2^{-\infty}(0)$ is dense in $[0,\, 1]$. Define
$h:\, [0,\, 1]\rightarrow [0,\, 1]$ by $h =
\lim\limits_{n\rightarrow \infty}h_n,$ where the limit is
considered point-wise. The function $h$ is well-defined due to the
density of $g_1^{-\infty}(0)$ in $[0,\, 1]$ and it is continuous
due to the density of $g_2^{-\infty}(0)$ in $[0,\, 1]$. Also
denote $\widehat{g}_n = h_n\circ g_1\circ h_n^{-1}.$ By
Lemma~\ref{lema:3.5} obtain that $\widehat{g}_n(x)=g_2(x)$ for all
$x\in g_2^{-n}(0)$ and~\eqref{eq:1.5} follows.
\end{proof}

\begin{remark}\label{rem:3.7}
It follows from~\eqref{eq:3.7} that the conjugacy of maps $g_1,\,
g_2$ of the form~\eqref{eq:1.2} increase.
\end{remark}

Remark, that the sequence $\{h_n,\, n\geq 1\}$ for the
approximation of the conjugacy of maps $f_v$ of the
form~\eqref{eq:1.3} also appeared in~\cite{Yong-Guo-Wang}. These
maps are denoted by $T_c$ in~\cite{Yong-Guo-Wang}, where $c\in
(0,\, 1)$ means the same as $v\in (0,\, 1)$ in our notations. The
solution $\varphi$ of the functional equation $\varphi\circ
T_{c_1} = T_{c_2}\circ \varphi$ is found in~\cite{Yong-Guo-Wang}
as the limit of the sequence $\{\varphi_n,\, n\geq 0\}$, where
$\varphi_0(x) = x$ for all $x\in [0,\, 1]$ and
\begin{equation}\label{eq:3.8} \varphi_{n+1}(x) = \left\{
\begin{array}{ll}
c_2\varphi_n\left(\frac{x}{c_1}\right) & \text{if } 0\leq x\leq
c_1,\\
(c_2-1)\varphi_n\left(\frac{x-1}{c_1-1}\right)+1 & \text{if
}c_1<x\leq 1.
\end{array}\right.
\end{equation}

Notice, that the sequence of functions $\{ h_n,\, n\geq 1\}$
satisfies~\eqref{eq:3.8}, whence $\varphi_n = h_{n+1}$ for all
$n\geq 0$.

The sequence~\eqref{eq:3.8} is considered in~\cite{Skufca} too for
the unimodal (not necessary piecewise linear) maps $g_1$ and
$g_2$, where, additionally, the peak of $g_1$ is $1/2$.
By~\cite[Lema~3]{Skufca} for any such $g_1$ and $g_2$ there is a
unique fixed element $h$ of~\eqref{eq:3.8}, which is bounded in
the Banach space with the norm $|h| = \sup\limits_{x\in [0,
1]}|h(x)|$. This fact is independent on wether of not $g_1$ and
$g_2$ are topologically conjugated. Clearly, if $g_1$ and $g_2$
are not topologically conjugated, then the fixed element $h$
of~\eqref{eq:3.8} is not a homeomorphism.

\section{The map-expansion of a number}\label{sec:4}

\subsection{General properties}

The construction below is the generalization of the binary
expansion of a number. By any given $x\in [0,\, 1]$ and by a
unimodal map $g$ we will construct an infinite sequence $\{x_i,
i\geq 1\}$ with $x_i\in \{0;\, 1\}$ for all $i\geq 1$, and denote
\begin{equation}\label{eq:4.1}
k_n = \sum\limits_{i=0}^{n}x_i2^{n-i}
\end{equation} for all $n\geq 1$.

\begin{lemma}\label{lema:4.1}
For every $x\in (0,\, 1)\setminus g^{-\infty}(0)$ there exists the
infinite sequence $\{x_i,\, i\geq 1\}$ with the following
properties:

1. $x_i\in \{0,\, 1\}$ for all $i\geq 1$,

2. $x\in (\mu_{n+1,k_n},\, \mu_{n+1,k_n+1})$, where $k_n$ is
defined by~\eqref{eq:4.1}.
\end{lemma}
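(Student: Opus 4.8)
The plan is to build the sequence $\{x_i\}$ one term at a time by induction on $n$, maintaining throughout the \emph{nesting invariant} that $x$ lies strictly inside the interval $(\mu_{n+1,k_n}(g),\, \mu_{n+1,k_n+1}(g))$, where $k_n$ is the partial sum~\eqref{eq:4.1}. I would set $x_0 = 0$, so that $k_0 = 0$ and the invariant at the bottom level reads $x \in (\mu_{1,0}(g),\, \mu_{1,1}(g)) = (0,\,1)$, which holds by hypothesis since $x\in(0,\,1)$.

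For the inductive step, suppose $x_1,\ldots,x_n$ have already been chosen so that $x \in (\mu_{n+1,k_n}(g),\, \mu_{n+1,k_n+1}(g))$. The key tool is Remark~\ref{rem:3.3}: since $\mu_{n+1,k}(g) = \mu_{n+2,2k}(g)$, the two endpoints of the current interval are exactly the even-indexed points $\mu_{n+2,2k_n}(g)$ and $\mu_{n+2,2k_n+2}(g)$ at the next level, and between them sits the single new point $\mu_{n+2,2k_n+1}(g)$, with
$$\mu_{n+2,2k_n}(g) < \mu_{n+2,2k_n+1}(g) < \mu_{n+2,2k_n+2}(g)$$
by the strict monotonicity $\mu_{n+2,k}(g) < \mu_{n+2,k+1}(g)$. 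Thus the current interval splits into two subintervals at level $n+1$, and I would set $x_{n+1}=0$ if $x$ lies in the left one and $x_{n+1}=1$ if it lies in the right one. In either case $k_{n+1} = 2k_n + x_{n+1}$, which is precisely the base-two recursion encoded by~\eqref{eq:4.1}, and the invariant $x \in (\mu_{n+2,k_{n+1}}(g),\, \mu_{n+2,k_{n+1}+1}(g))$ is restored.

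The hypothesis $x \notin g^{-\infty}(0)$ enters exactly at the splitting step: the point $\mu_{n+2,2k_n+1}(g)$ lies in $g^{-(n+2)}(0) \subseteq g^{-\infty}(0)$, so $x \neq \mu_{n+2,2k_n+1}(g)$, and therefore $x$ falls \emph{strictly} inside one of the two open subintervals. This is what guarantees that $x_{n+1}$ is unambiguously defined and that the construction never stalls, producing an infinite sequence. I do not expect any genuine difficulty here; the content is a clean dyadic refinement driven entirely by Remark~\ref{rem:3.3}. The only point demanding care is the bookkeeping --- verifying that the index doubling of Remark~\ref{rem:3.3} matches the recursion $k_{n+1} = 2k_n + x_{n+1}$ built into~\eqref{eq:4.1}, and confirming that the admissible range $0 \le k_n \le 2^n-1$ is respected at every level (it is, since $k_{n+1}=2k_n+x_{n+1}\le 2(2^n-1)+1 = 2^{n+1}-1$).
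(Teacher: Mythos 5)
Your proposal is correct and is essentially the paper's own argument: both rely on Remark~\ref{rem:3.3} to identify the endpoints of $(\mu_{n+1,k_n},\,\mu_{n+1,k_n+1})$ with $\mu_{n+2,2k_n}$ and $\mu_{n+2,2k_n+2}$, so that the interval splits at the single new point $\mu_{n+2,2k_n+1}$, the digit $x_{n+1}\in\{0,1\}$ is read off from the recursion $k_{n+1}=2k_n+x_{n+1}$ matching~\eqref{eq:4.1}, and the hypothesis $x\notin g^{-\infty}(0)$ guarantees $x$ never coincides with a splitting point. Your write-up merely makes the induction, the role of the hypothesis, and the index bookkeeping more explicit than the paper's compressed proof.
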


\begin{proof}
The sequence $\{k_n,\, n\geq 1\}$ such that $x\in
(\mu_{n+1,k_n},\, \mu_{n+1,k_n+1})$ exists, because $g^n(x)\neq 0$
for all $n$. For any $n\geq 1$ assume that~\eqref{eq:4.1} holds.
Then it follows from Remark~\ref{rem:3.3} that $x\in
(\mu_{n+2,2k_n},\, \mu_{n+2,2k_n+2})$, whence either
$k_{n+1}=2k_n$, or $k_{n+1}=2k_n+1$. Thus, there exists $x_n\in
\{0;\, 1\}$ such that $k_{n+1} = 2\cdot k_n + x_n,$ whence %
$ k_{n+1} = x_n + 2\cdot\sum\limits_{i=0}^{n} x_i2^{n-i} =
\sum\limits_{i=0}^{n+1} x_i2^{n+1-i}$ and we are done.
\end{proof}

\begin{notation}\label{not:4.2}
For every $x\in [0,\, 1]$ construct the infinite sequence
$\{x_k,\, k\geq 0\}$ as follows:

1. If $x =1$, then set $x_k=1$ for all $k\geq 1$.

2. If $x<1$, but $x\in g^{-n}(x)=0$ for some $n\geq 1$, then set
$x_1,\ldots, x_{n-2}$ to be such that the number $k$ from the
equality $x = \mu_{n+1,k}$, has the binary expansion $k =
\sum\limits_{i=1}^{n}x_i2^{n-i}$; and set $x_k=0$ for all $k> n$.

3. If $g^n(x)\neq 0$ for all $n\geq 1$, then denote $\{ x_k,\,
k\geq 1\}$ the sequence from Lemma~\ref{lema:4.1}.

It will be convenient for uss to add $x_0=0$ to the left of the
sequences from Notation~\ref{not:4.2}. Thus, call the constructed
sequence \textbf{the $g$-expansion} of $x$ and write
\begin{equation}\label{eq:4.2}
x \stackrel{g}{\longrightarrow} x_0x_1\ldots
x_n\ldots\ .
\end{equation}

\begin{remark}
The $g$-expansion of any $x\in [0, 1]$ is unique if and only if
$g^{-\infty}(0)$ is dense in $[0, 1]$. Remind that, by Ulam's
Theorem~\ref{th:1}, in this case the map $g$ is topologically
conjugated to the tent map.
\end{remark}

Thus, if $g^{-\infty}(0)$ is dense in $[0, 1]$, then write
\begin{equation}\label{eq:4.3} x \stackrel{g}{\longleftrightarrow}
x_0x_1\ldots x_n\ldots
\end{equation} for %
the $g$-expansion of $x$. Say that $x$ is $g$-\textbf{finite}, if
there is $k$ such that $x_i=0$ for all $i>k$. Otherwise say that
$x$ is $g$-\textbf{infinite}. Analogously to rational numbers,
assume that the $g$-expansion of a $g$-finite number is finite
(i.e. does not contain the infinite series of zeros).
\end{notation}

\begin{remark}
Remark, that if $g$ is the tent map~\eqref{eq:1.1}, then
$g$-expansion is the classical binary expansion of a number.
\end{remark}

\begin{remark}
Notice, that for numbers $x,\, y\in [0,\, 1]$ the inequality
$x\leq y$ holds if and only if the $g$-expansion of $x$ is $\leq$
than $g$-expansion of $y$ in the natural lexicographical order.
\end{remark}

In spite of non-uniqueness of $g$-expansion in general, the
analogue of Remark~\ref{rem:2.3} holds for $g$-expansion.

\begin{remark}
Let~\eqref{eq:4.2} be a $g$-expansion of $x\in [0,\, 1]$. Then
\begin{equation}\label{eq:4.4} g(x) \stackrel{g}{\longrightarrow} \left\{
\begin{array}{ll}
0.x_2x_3\ldots x_n\ldots, & \text{if }x_1 =
0,\\
0.\Rot(x_2)\Rot(x_3)\ldots \Rot(x_n)\ldots, & \text{if }x_1 = 1,
\end{array}\right.
\end{equation} where %
$\Rot$ is defined in Notation~\ref{not:2.2}. Moreover,
if~\eqref{eq:4.3} holds, then ``$\stackrel{g}{\longrightarrow}$''
is ``$\stackrel{g}{\longleftrightarrow}$'' in~\eqref{eq:4.4}.
\end{remark}

\subsection{Specification for firm carcass map}

Let $g$ be a firm carcass map, which will be fixed till the end of
this section. Denote by $n_0$ the minimal natural number such that
$g^{n_0}(x) = 0$ for each kink $x$ of $g$.

\begin{notation}\label{not:4.7}
For any $n\geq 1$ and $k,\, 0\leq k< 2^{n-1}$ denote $I_{n,k} =
(\mu_{n,k},\, \mu_{n,k+1})$ and $\# I_{n,k} =
\mu_{n,k+1}-\mu_{n,k}$.
\end{notation}

\begin{remark}
For every $k,\, 0\leq k<2^{n_0-1}$ the graph of the function $g$
on $I_{n_0,k}$ is a segment of a line.
\end{remark}

\begin{notation}\label{not:4.9}
For any $n\geq 1$ and $k,\, 0\leq k< 2^n$ denote
$$\delta_{n,k} =
\frac{\mu_{n+1,2k+1}-\mu_{n,k}}{\mu_{n,k+1}-\mu_{n,k}}.$$
\end{notation}

\begin{remark}\label{rem:4.10}
Suppose that $I_{n+2,p}\subseteq I_{n+1,k}$, where~\eqref{eq:4.1}
is the binary expansion of $k$. By Remark~\ref{rem:3.3}, there
there exists $x_{n+1}\in \{0; 1\}$ such that $p =2k+x_{n+1}$.
Thus, by Notation~\ref{not:4.9},
$$ \# I_{n+2,p} = \# I_{n+1,k}\cdot
\Rot^{x_{n+1}}(\delta_{n+1,k})).
$$
\end{remark}

\begin{remark}\label{rem:4.11}
For any $n\geq n_0-1$ and $k,\, 0\leq k< 2^{n-1}$ denote
$I_{n-1,k^*} =g(I_{n,k})$. Then
$$
\delta_{n-1,k^*} = \left\{ \begin{array}{ll} \delta_{n,k}&
\text{if }I_{n,k}\subset I_{2,0},\\
1-\delta_{n,k}& \text{otherwise.}
\end{array}\right.
$$
\end{remark}

Using Notation~\ref{not:2.2}, we can rewrite Remark~\ref{rem:4.11}
as follows.

\begin{remark}\label{rem:4.13}
For any $n\geq n_0$ and $k,\, 0\leq k< 2^n$ with the binary
expansion~\eqref{eq:4.1} denote $I_{n,k^*} = g(I_{n+1,k})$. Then
$$ \delta_{n+1,k} = \Rot^{x_n}(\delta_{n,k^*}).
$$
\end{remark}

\begin{notation}\label{not:4.14}
Denote $v_k = \delta_{n_0-1,k}$ for every $k,\, 0\leq k<
2^{n_0-1}$. \end{notation}

\begin{notation}\label{not:4.15}
Denote $\mathcal{V} =\{ v_k,\, 0\leq k< 2^{n_0-1}\}$, where $v_k$
are defined in Notation~\ref{not:4.14}, and
$$ v_- =\min\limits_{v\in \mathcal{V}}\{ v,\, 1-v\}\text{ and }
v^+ =\max\limits_{v\in \mathcal{V}}\{ v,\, 1-v\}.
$$
\end{notation}

The following remark follows from Remark~\ref{rem:4.13}

\begin{remark}\label{rem:4.16}
For any $n\geq n_0$ and any $p$ and $k$ such that $I_{n, k}\subset
I_{n_0-1,p}$ the restriction $$\# I_{n_0-1, p}\cdot
(v_-)^{n-n_0+1}\leq  \# I_{n,k} \leq \# I_{n_0-1, p}\cdot
(v^+)^{n-n_0+1}
$$ holds.
\end{remark}

\begin{remark}\label{rem:4.17}
Let $n\geq 1$ and $k,\, 0\leq k< 2^n$ has binomial
expansion~\eqref{eq:4.1}. Then
$$ g(I_{n+1,k}) = I_{n,p},
$$ where the binomial expansion of $p$ is $$
p = \left\{ \begin{array}{ll} x_2\ldots x_n & \text{if }\, x_1
=0,\\
\Rot(x_2)\ldots \Rot(x_n) & \text{if }\, x_1 =1.
\end{array}\right.
$$
\end{remark}

The next remark follows from Remarks~\ref{rem:4.13}
and~\ref{rem:4.17} by induction.

\begin{remark}\label{rem:4.18}
Let $n\geq n_0$ and $k,\, 0\leq k< 2^{n-1}$ has the binary
expansion~\eqref{eq:4.1}. Then $\delta_{n+1,k} =
\Rot^\alpha(\delta_{n_0,p}),$ where $p =
\sum\limits_{i=n-n_0+2}^{n}\Rot^\alpha(x_i)2^{n-i}$,
\begin{equation}\label{eq:4.5} \alpha_{n-n_0+1} =
\sum\limits_{i=1}^{n-n_0+1}|x_i-x_{i-1}|\end{equation} and
$x_0=0$.
\end{remark}

\begin{remark}\label{rem:4.19}
Let~\eqref{eq:4.1} be the binary expansion of a natural number $k$
and let $\alpha_n$ be defined by~\eqref{eq:4.5}. Then
$(-1)^{\alpha(k)}=(-1)^{x_n}$.
\end{remark}

\begin{proof}
For every $i<n$ such that $x_i=1$ denote $p$ the number, which is
obtained from $k$ by the change of $x_i$ to $0$. Then $$
\alpha_n(k)-\alpha_n(p) =(1-x_{i-1})+(1-x_{i+1})-x_{i-1}-x_{i+1} =
2(1-x_{i-1}-x_{i+1}),
$$ which is an even number, whence
$(-1)^{\alpha(k)}=(-1)^{\alpha(p)}$ and the necessary equality
follows.
\end{proof}

By Remark~\ref{rem:4.19}, we can rewrite Remark~\ref{rem:4.18} as

\begin{remark}\label{rem:4.20}
Let $n\geq n_0$ and $k,\, 0\leq k< 2^{n-1}$ has the binary
expansion~\eqref{eq:4.1}. Then $\delta_{n+1,k} =
\Rot^{x_{n-n_0+1}}(\delta_{n_0,p_n}),$ where
\begin{equation}\label{eq-4.6}p_{i} =
\sum\limits_{j=i-n_0+2}^{i}\Rot^{x_{i+1-n_0}}(x_j)2^{\, i-j}
\end{equation} for all $i,\, n_0+1\leq i\leq n$.
\end{remark}

\begin{lemma}\label{lema:4.22}
For any $n< n_0$ and $k,\, 0\leq k\leq
2^{n-1}$,\begin{equation}\label{eq-4.7}\# I_{n+1,k_n} = \#
I_{n_0,k_{n_0-1}}\cdot
\prod\limits_{i=n_0+1}^{n}\Rot^{x_i+x_{i-n_0+1}}(\delta_{n_0,p_{i-1}})
\end{equation} holds, where $p_{i-1}$ is defined by~\eqref{eq-4.6}
for all $i,\, n_0+1\leq i\leq n$.
\end{lemma}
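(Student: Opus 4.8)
The plan is to prove the identity by induction on $n$ (equivalently, by a single telescoping), using the one–step recursion of Remark~\ref{rem:4.10} as the engine and Remark~\ref{rem:4.20} to rewrite each factor it produces in terms of the base–level quantities $\delta_{n_0,p_{i-1}}$ defined by~\eqref{eq-4.6}.

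First I would record the single–step relation in the $k_n$–notation. Since $k_{n+1}=2k_n+x_{n+1}$ by~\eqref{eq:4.1}, Remark~\ref{rem:4.10} reads
$$\# I_{n+2,k_{n+1}} = \# I_{n+1,k_n}\cdot \Rot^{x_{n+1}}(\delta_{n+1,k_n}).$$
Telescoping this relation from the base level $n_0$ to the target level $n$ collapses all the intermediate lengths and leaves
$$\# I_{n+1,k_n} = \# I_{n_0,k_{n_0-1}}\cdot \prod_{i} \Rot^{x_i}(\delta_{i,k_{i-1}}),$$
so the base term $\# I_{n_0,k_{n_0-1}}$ already appears, and the only remaining task is to put each factor $\Rot^{x_i}(\delta_{i,k_{i-1}})$ into the claimed form.

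For that step I would apply Remark~\ref{rem:4.20} to each interior delta $\delta_{i,k_{i-1}}$, which expresses it as a $\Rot$–power of the base–level delta $\delta_{n_0,p_{i-1}}$. Composing the outer $\Rot^{x_i}$ coming from the telescoping with the inner $\Rot$–power coming from Remark~\ref{rem:4.20}, and using that $\Rot$ is an involution by Notation~\ref{not:2.2} (so that $\Rot^{a}(\Rot^{b}(\,\cdot\,))=\Rot^{a+b}(\,\cdot\,)$ with the exponent read modulo $2$), merges the two operations into a single factor $\Rot^{x_i+x_{i-n_0+1}}(\delta_{n_0,p_{i-1}})$. Doing this for every index in the product yields~\eqref{eq-4.7}. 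Organized as an induction, the base case is one application of Remark~\ref{rem:4.10}, and the inductive step multiplies the already–established product by the single new factor $\Rot^{x_{n+1}}(\delta_{n+1,k_n})$ and rewrites it via Remark~\ref{rem:4.20}.

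The genuinely delicate part is the index bookkeeping rather than any analytic content: one must keep straight the shift between the level–$i$ delta $\delta_{i,k_{i-1}}$ and its base–level representative $\delta_{n_0,p_{i-1}}$, verify that the exponents produced by the nested $\Rot$'s really add up to $x_i+x_{i-n_0+1}$ modulo $2$, and confirm that the telescoped product range is exactly $n_0+1\leq i\leq n$ with base factor $\# I_{n_0,k_{n_0-1}}$. All of these are finite, mechanical checks resting on Remarks~\ref{rem:4.10} and~\ref{rem:4.20} together with the involutivity of $\Rot$; once the indices are aligned, no estimates are needed and the identity follows.
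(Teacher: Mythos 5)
Your proposal is correct and takes essentially the same route as the paper: the paper's own proof applies Remark~\ref{rem:4.10} for a single step, rewrites the resulting factor via Remark~\ref{rem:4.20} as $\Rot^{x_n+x_{n-n_0+1}}(\delta_{n_0,p_{n-1}})$, and concludes ``by induction,'' which is exactly your telescoping organized inductively, with the involutivity of $\Rot$ used implicitly in the same way. The index subtleties you flag (the product range and the exponent shift between $\delta_{i,k_{i-1}}$ and $\delta_{n_0,p_{i-1}}$) are genuine, and in fact the paper's own statement and proof contain the same off-by-one tensions, so your bookkeeping caution is well placed.
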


\begin{proof}
By Remark~\ref{rem:4.10},
$$
\# I_{n+1,k_n} = \# I_{n,k_{n-1}}\cdot
\Rot^{x_n}(\delta_{n,k_{n-1}}).
$$ Now, by Remark~\ref{rem:4.20}, we can continue $$
\# I_{n,k_{n-1}}\cdot \Rot^{x_n}(\delta_{n,k_{n-1}}) =  \#
I_{n,k_{n-1}}\cdot \Rot^{x_n+x_{n-n_0+1}}(\delta_{n_0,p_{n-1}}),
$$ where $p_{n-1}$ is defined in~\eqref{eq-4.6}. %
Now~\eqref{eq-4.7} follows by induction.
\end{proof}

We are now ready to prove Theorem~\ref{th:2}.

\begin{proof}[Proof of Theorem~\ref{th:2}]
Denote $d = \max\limits_{k,\, 0\leq k< 2^{n_0-1}}\# I_{n_0-1,k}$.
By Remark~\ref{rem:4.16}, $$\#I_{n,k}\leq d\cdot (v^+)^{n-n_0+1}
$$ for any $n\geq n_0$ and $k,\, 0\leq k\leq 2^{n-1}$.

Since $v^+<1$, then $$\lim\limits_{n\rightarrow
\infty}\left(\max\limits_{k,\, 0\leq k< 2^{n-1}}\# I_{n,k}\right)
\leq \lim\limits_{n\rightarrow \infty} d\cdot (v^+)^{n-n_0+1} =0$$
and theorem follows.
\end{proof}

\section{The derivative of the conjugation}\label{sec:5}

We will start this section with some technical calculations, which
we will use for the further reasonings.

\subsection{Technical calculations}

\begin{notation}
Let $g$ be a firm carcass map, which will be fixed up to the end
of this section.

Let $x\in [0,\, 1]$ be fixed till the end of the section,
$\{x_n,\, n\geq 0\}$ be its $g$-expansion and $k_n$ be defined
by~\eqref{eq:4.1}. Denote $\widehat{x}_n =\mu_{n+1,k_n}, $
$\widehat{x}_n^{\, +} = \mu_{n+1,k_n+1},$ $\widehat{x}_n^{\, \pm}
= \mu_{n+2,2k_n+1}, $ $ \widehat{x}_n^{\, -} =\mu_{n+1,k_n-1} $
and $\widehat{x}_n^{\, \mp} =\mu_{n+2,2k_n-1}$.
\end{notation}

\begin{remark}\label{rem-01}
Notice, that, by Notation~\ref{not:4.7},\\ $I_{n+1,k_n} =
(\widehat{x}_n, \widehat{x}_n^{\, +})$ and\\
$I_{n+2,2k_n} = (\widehat{x}_n, \widehat{x}_n^{\, \pm})$.
\end{remark}

\begin{remark}
Notice that $$\widehat{x}_n^{\, -} \leq \widehat{x}_n^{\, \mp}\leq
x\leq \widehat{x}_n^{\, \pm}\leq \widehat{x}_n^{\, +}
$$ for every $n\geq 1$. Moreover, %
all ``$\leq$'' are ``$<$'', whenever $x\not\in \{0,\, 1\}$.
\end{remark}

Suppose that $x_{n+1}=1$. Then we can specify the
$g$-expansion~\eqref{eq:4.3} of $x$ as
\begin{equation}\label{eq:5.1}
x \stackrel{g}{\longleftrightarrow} 0x_1x_2\ldots x_{n}\, 1\,
\underbrace{0\ldots 0}_{t\text{ zeros}}\, 1\,
x_{n+t+3}x_{n+t+4}\ldots\, .
\end{equation}

\begin{remark}
If~\eqref{eq:5.1} is the $g$-expansion of $x\in [0,\, 1]$, then
$$x_{n+1} =x_{n+t+2}=1.$$
$$
x_{n+2}=x_{n+3} =\ldots =x_{n+t+1}=0.
$$
$$
\widehat{x}_{n+1} = \widehat{x}_{n+2} = \ldots =
\widehat{x}_{n+t+1}.
$$
\end{remark}

Denote \begin{equation}\label{eq:5.2} b_i =
\widehat{x}_{n+i+1}^{\, -},\, 0\leq n\leq t.
\end{equation}

\begin{calcul}\label{calc:5.4}
If $x\in (0,\, 1)$ has $g$-expansion~\eqref{eq:5.1} and $\{b_i,\,
i\geq 0\}$ is given by~\eqref{eq:5.2}, then $b_0 = \widehat{x}_n$
and
\begin{equation}\label{eq:5.3}
b_i = \widehat{x}_{n+i}^{\, \mp} \end{equation} %
for all $i,\, 1\leq i\leq t$.
\end{calcul}

\begin{proof}
Since $x_{n+1}=1$, then $\widehat{x}_{n+1}^{\, -} =
\widehat{x}_n$, whence $b_0 = \widehat{x}_n$. Next, $$
\widehat{x}_{n+1} = \widehat{x}_{n+2} = \ldots \widehat{x}_{n+t}
=\widehat{x}_{n+t+1} \neq \widehat{x}_{n+t+2}
$$ implies $\widehat{x}_{n+i+1}^{\, -} = \widehat{x}_{n+i}^{\, \mp}$ for
$i=1,\ldots,\, t$, whence~\eqref{eq:5.3} follows.
\end{proof}

\begin{lemma}\label{lema:5.5}
Suppose that $n\geq n_0-1$. Then the following hold:

1. For any $i,\, 0\leq i\leq n_0+1$ write
\begin{equation}\label{eq:5.4}
\# ([\widehat{x}_n,\, \widehat{x}_n^+])\cdot (v_-)^{i+1} \leq \#
([b_i,\, \widehat{x}_{n+1}]) = \# ([\widehat{x}_n,\,
\widehat{x}_n^+])\cdot (v^+)^{i+1}.
\end{equation}

2. For every $i,\, n_0+2\leq i\leq t$ we have that
\begin{equation}\label{eq:5.5}
\# ([\widehat{x}_n,\, \widehat{x}_n^+])\cdot (v_-)^{n_0+2}\cdot
v_0^{i-n_0-1} \leq \# ([b_i,\, \widehat{x}_{n+1}]) \leq \#
([\widehat{x}_n,\, \widehat{x}_n^+])\cdot (v^+)^{n_0+2}\cdot
v_0^{i-n_0-1}.
\end{equation}

3. For any $i,\, 1\leq i\leq n_0$ the restriction
\begin{equation}\label{eq:5.6}
\# [\widehat{x}_{n}, \widehat{x}_n^+]\cdot (v_-)^i\leq \#
[\widehat{x}_{n+1}, \widehat{x}_{n+i}^+]\leq \# [\widehat{x}_{n},
\widehat{x}_n^+]\cdot (v^+)^i
\end{equation} holds.

4. For every $i,\, n_0\leq i\leq t-n_0+1$ we have that
\begin{equation}\label{eq:5.7}
\# [\widehat{x}_{n+1}, \widehat{x}_{n+i}^+] = \#
[\widehat{x}_{n+1}, \widehat{x}_{n+n_0}^+]\cdot v_0^{i-n_0}
\end{equation} and
\begin{equation}\label{eq:5.8}
\# [\widehat{x}_{n}, \widehat{x}_n^+]\cdot (v_-)^{n_0}\cdot
v_0^{i-n_0}\leq \# [\widehat{x}_{n+1}, \widehat{x}_{n+i}^+] \leq
\# [\widehat{x}_{n}, \widehat{x}_n^+]\cdot (v^+)^{n_0}\cdot
v_0^{i-n_0}.
\end{equation}

5. For every $i,\, t-n_0+2\leq i\leq t$ we have
\begin{equation}\label{eq:5.9}
\begin{array}{l}
\# [\widehat{x}_{n}, \widehat{x}_n^+]\cdot v_0^{t-2n_0+1}\cdot
(v_-)^{i-t+2n_0-1} \leq \# [\widehat{x}_{n+1},
\widehat{x}_{n+i}^+] \leq\\
\# [\widehat{x}_{n}, \widehat{x}_n^+] \cdot v_0^{t-2n_0+1}\cdot
(v^+)^{i-t+2n_0-1}
\end{array}
\end{equation}
\end{lemma}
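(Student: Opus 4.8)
The plan is to treat all five estimates with a single mechanism: identify each interval appearing on the left-hand sides as one \emph{basic} interval $\overline{I_{m,j}}$ nested inside $I_{n+1,k_n}=[\widehat{x}_n,\widehat{x}_n^+]$, record the level difference $m-(n+1)$, expand its length as a telescoping product of ratios via Remark~\ref{rem:4.10}, and then estimate each factor. First I would rewrite the endpoints. Using Calculation~\ref{calc:5.4} together with Remark~\ref{rem:3.3} (which gives $\mu_{n+i+1,k_{n+i}}=\mu_{n+i+2,2k_{n+i}}$) and the equalities $\widehat{x}_{n+1}=\cdots=\widehat{x}_{n+t+1}$ coming from the run of zeros in~\eqref{eq:5.1}, one checks that for $1\leq i\leq t$ the interval $[b_i,\widehat{x}_{n+1}]$ is the closed basic interval $\overline{I_{n+i+2,\,2k_{n+i}-1}}$, while $[\widehat{x}_{n+1},\widehat{x}_{n+i}^+]=\overline{I_{n+i+1,\,k_{n+i}}}$ since the common left endpoint is $\widehat{x}_{n+1}=\widehat{x}_{n+i}$. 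The case $i=0$ of part~1 is read off from Remark~\ref{rem-01}, giving $[b_0,\widehat{x}_{n+1}]=\overline{I_{n+2,2k_n}}$. Thus each length is a single $\# I_{m,j}$ with $I_{m,j}\subset I_{n+1,k_n}$ at level difference $i+1$ (for the $b_i$ intervals) or $i$ (for the $\widehat{x}_{n+i}^+$ intervals).

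The second step is the product expansion. Iterating Remark~\ref{rem:4.10} down the nested chain $I_{n+1,k_n}\supset\cdots\supset I_{m,j}$ writes $\# I_{m,j}=\# I_{n+1,k_n}\cdot\prod \Rot^{x_\bullet}(\delta_{\bullet,\bullet})$, one factor per level. Since $n\geq n_0-1$, every factor is a ratio at level $\geq n_0$, so by Remark~\ref{rem:4.20} it equals $\Rot^{x_\bullet}(\delta_{n_0,p_\bullet})$ for the window $p_\bullet$ given by~\eqref{eq-4.6}. By Remark~\ref{rem:4.11} each $\delta_{n_0,p}$ is some $v_k=\delta_{n_0-1,k}$ or its complement $1-v_k$, so, by Notations~\ref{not:4.14} and~\ref{not:4.15}, every factor lies in $[v_-,v^+]$. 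Bounding all $i+1$ (respectively $i$) factors crudely by $v_-$ and $v^+$ gives the two-sided estimates of parts~1 and~3 at once (the ``$=$'' in~\eqref{eq:5.4} should read ``$\leq$'').

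For the sharper statements~\eqref{eq:5.5}, \eqref{eq:5.7}--\eqref{eq:5.9} the key observation is that, \emph{deep inside the zero block}, the factors are not merely bounded but equal $v_0$ exactly. Indeed, when the whole window $x_{\bullet+1-n_0},x_{\bullet-n_0+2},\ldots,x_\bullet$ of digits defining $p_\bullet$ in~\eqref{eq-4.6} lies inside the run of zeros of~\eqref{eq:5.1}, one gets $p_\bullet=0$ and $x_{\bullet+1-n_0}=0$, so the factor is $\Rot^0(\delta_{n_0,0})=\delta_{n_0,0}$. Applying Remark~\ref{rem:4.11} to the leftmost interval $I_{n_0,0}\subset I_{2,0}$, whose image under $g$ is again the leftmost $I_{n_0-1,0}$, gives $\delta_{n_0,0}=\delta_{n_0-1,0}=v_0$. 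Hence every level whose defining window is entirely inside the zero block contributes the constant factor $v_0$. Splitting the product into a ``free'' prefix (each factor in $[v_-,v^+]$) and an ``exact $v_0$'' interior, and reading off how many of each occur, yields~\eqref{eq:5.5}, \eqref{eq:5.8} and~\eqref{eq:5.9}; the exact identity~\eqref{eq:5.7} follows since each interior step contributes the single constant $v_0$, and the two-sided bounds in parts~4 and~5 then come from applying parts~1 and~3 to the initial $\# I_{n+n_0+1,k_{n+n_0}}$ or $\#[\widehat{x}_n,\widehat{x}_n^+]$ prefix.

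The main obstacle is exactly this bookkeeping at the boundary of the zero block: determining, for each $i$, which levels have their window~\eqref{eq-4.6} entirely inside the zeros and which straddle the surrounding ones at positions $n+1$ and $n+t+2$. This is what pins down the index thresholds (the passage from part~1 to part~2 at $i=n_0+1$, and the cutoffs $i=n_0$ and $i=t-n_0+1$ separating parts~3, 4 and~5) and the exact exponents $n_0+2$, $t-2n_0+1$ and $i-t+2n_0-1$; since the window has width $n_0$, it is fully inside the block only after $n_0$ steps have entered it and before the last steps approach the terminating digit $x_{n+t+2}=1$, which is why the ``free'' power is $n_0$ on the left and grows again near the end. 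A further subtlety is that the $b_i$-chains pass through the \emph{left-neighbour} intervals $I_{n+i+1,k_{n+i}-1}$ (a factor $1-\delta$ rather than $\delta$), which shifts the free count by one and produces the exponents $n_0+2$ and $i-n_0-1$ of part~2; everything else is the routine telescoping together with the uniform bound $v_-\leq\delta\leq v^+$ established above.
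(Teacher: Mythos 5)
Your proposal is correct and takes essentially the same route as the paper's own proof: both identify $[b_i,\widehat{x}_{n+1}]$ and $[\widehat{x}_{n+1},\widehat{x}_{n+i}^{\,+}]$ as basic intervals via the $g$-expansions of Calculation~\ref{calc:5.4}, telescope their lengths by Remark~\ref{rem:4.10}, obtain the exact factor $v_0$ (via Remark~\ref{rem:4.20}, with the rotation handling the ones-window for the $b_i$-chain, where $\Rot^1(1-v_0)=v_0$) for levels whose digit window lies inside the zero block, and use the uniform bounds $v_-\leq \Rot^{x}(\delta)\leq v^+$ of Remark~\ref{rem:4.16} for the boundary levels, which is exactly how the paper produces the thresholds and exponents in~\eqref{eq:5.4}--\eqref{eq:5.9}. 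You also correctly observe that the ``$=$'' in~\eqref{eq:5.4} is a typo for ``$\leq$''.
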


\begin{proof}
The restriction~\eqref{eq:5.4} follows from Remark~\ref{rem:4.10}.

By Calculation~\ref{calc:5.4} the $g$-expansion of
$\widehat{x}_{n+1}$ is $$ \widehat{x}_{n+1}
\stackrel{g}{\longleftrightarrow} x_0x_1\ldots x_n1,$$ the
$g$-expansion of $b_i$ is $$b_i \stackrel{g}{\longleftrightarrow}
x_0x_1\ldots x_n0\underbrace{1\ldots 1}_{i\text{ ones}}$$ %
and $b_i = \mu_{n+i+1,p_i}$, where the binary expansion of $p_i$
is $$ p_i  = x_1\ldots x_n0\underbrace{1\ldots 1}_{i\text{
ones}}\,.$$ Thus, $$ (b_i, \widehat{x}_{n+1}) = I_{n+2+i,p_i}.
$$

By Remark~\ref{rem:4.20} for any $i,\, 0\leq i\leq t$ denote $q_i$
the number with the binary expansion $$ q_i =
\Rot^{x_{n+3+i-n_0}}(x_{n+4+i-n_0})\ldots\,
\Rot^{x_{n+3+i-n_0}}(x_{n+1+i})
$$ and
write $$ \delta_{n+2+i,p_i} =
\Rot^{x_{n+3+i-n_0}}%
(\delta_{n_0-1,\ q_i}).
$$
If $n_0< i\leq t,$ then
\begin{equation}\label{eq:5.10}
\delta_{n+2+i,p_i} =
\Rot^{1}%
\left(\delta_{n_0-1, \Rot^1(q_i)}\right) = 1 - \delta_{n_0-1,0} =
1-v_0.
\end{equation}

If $i\geq n_0+2$, then~\eqref{eq:5.10} implies $$ \# ([b_i,\,
\widehat{x}_{n+1}]) = \# ([b_{i-1},\, \widehat{x}_{n+1}])\cdot
\Rot^1(\delta_{n+1+i,\, p_{i-1}}) = \# ([b_{i-1},\,
\widehat{x}_{n+1}])\cdot v_0,
$$ whence $$
\# ([b_i,\, \widehat{x}_{n+1}]) = \# ([b_{n_0+1},\,
\widehat{x}_{n+1}])\cdot v_0^{i-n_0-1}
$$
and~\eqref{eq:5.5} follows.

The restriction~\eqref{eq:5.6} follows from Remark~\ref{rem:4.16}.

Remind that $[\widehat{x}_{n+1}, \widehat{x}_{n+i}^+] =
[\widehat{x}_{n+i}, \widehat{x}_{n+i}^+] = I_{n+i+1, k_{n+i}}$.

Notice, that for every $i,\, 1\leq i<t$ the last $i-1$ binary
digits of $k_{n+i}$ are zero and the last binary digit of
$k_{n+t}$ is $1$. Thus, if $n_0\leq i\leq t-n_0+1$, then the last
$n_0-1$ binary digits of $k_{n+i}$ are zero and, by
Remark~\ref{rem:4.20},
$$
\delta_{n+i+1,k_{n+i}} = \delta_{n_0-1,0}.
$$ %
Now, by Remark~\ref{rem:4.13}, $$ \# [\widehat{x}_{n+1},
\widehat{x}_{n+i}^+] = \# [\widehat{x}_{n+1},
\widehat{x}_{n+i-1}^+]\cdot v_0,
$$ whence~\eqref{eq:5.7} follows by induction.

Restrictions~\eqref{eq:5.8} and~\eqref{eq:5.9} follow
from~\eqref{eq:5.7} by Remark~\ref{rem:4.16}.
\end{proof}

\subsection{The case of firm carcass maps}

Let $g_1$ and $g_2$ be firm carcass maps, which will be fixed to
the end of the section. Denote by $n_0$ the minimal natural number
such that $g_i^{-n_0}(0)$ contains all the kinks of $g_i$, where
$i\in \{1,\, 2\}$. For every $n\geq 1$ evert $k,\, 0\leq k\leq
2^{n-1}$ and $i\in \{1;\, 2\}$ define $\mu_{n,k}(g_i)$,
$I_{n,k}(g_i)$, $\delta_{n,k}(g_i)$, $v(g_i)$, $\mathcal{V}(g_i)$,
$v_-(g_i)$ and $v^{+}(g_i)$ as in Notations~\ref{not:4.7},
\ref{not:4.9} and~\ref{not:4.15}.

\begin{lemma}\label{lema:5.6}
For every $x\in [0,\, 1]$ the $g_2$-expansion of $h(x)$ coincides
with the $g_1$-expansion of $x$.
\end{lemma}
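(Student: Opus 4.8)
The plan is to reduce everything to two facts already established: the conjugacy $h$ is an increasing homeomorphism of $[0,1]$ (Remark~\ref{rem:3.7}), and it carries the points $\mu_{n,k}(g_1)$ to the corresponding points $\mu_{n,k}(g_2)$, that is $h(\mu_{n,k}(g_1)) = \mu_{n,k}(g_2)$ for all $n\geq 1$ and $0\leq k\leq 2^{n-1}$ (Lemma~\ref{lema:3.6}). Since both $g_1^{-\infty}(0)$ and $g_2^{-\infty}(0)$ are dense by Theorem~\ref{th:2}, the $g_1$- and $g_2$-expansions are unique, so it is meaningful to speak of \emph{the} expansions and to compare them digit by digit.

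First I would fix $x\in [0,1]$, let $x_0x_1\ldots$ be its $g_1$-expansion, and set $k_n = \sum_{i=0}^n x_i 2^{n-i}$ as in~\eqref{eq:4.1}. I would then run through the three cases of Notation~\ref{not:4.2}. In the generic case $g_1^n(x)\neq 0$ for all $n$, the defining property of the expansion (Lemma~\ref{lema:4.1}) is the nesting
$$x\in (\mu_{n+1,k_n}(g_1),\, \mu_{n+1,k_n+1}(g_1)) \quad\text{for all } n\geq 1.$$
Applying the strictly increasing bijection $h$ and using Lemma~\ref{lema:3.6} to identify the images of the endpoints gives
$$h(x)\in (\mu_{n+1,k_n}(g_2),\, \mu_{n+1,k_n+1}(g_2)) \quad\text{for all } n\geq 1,$$
which is precisely the statement that the $g_2$-expansion of $h(x)$ has the same digits $x_0x_1\ldots$ and the same integers $k_n$. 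This settles the generic case.

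The boundary cases are even more direct. If $x\in g_1^{-\infty}(0)$, then $x = \mu_{n+1,k}(g_1)$ for suitable $n$ and $k$, so Lemma~\ref{lema:3.6} gives $h(x) = \mu_{n+1,k}(g_2)$; since the finite part of a $g$-expansion is read off from exactly this index $k$, the two finite expansions coincide. If $x=1$, then $h(1)=1$ because $h$ is an increasing homeomorphism of $[0,1]$, and both copies of $1$ carry the all-ones expansion by convention. Thus in every case the $g_2$-expansion of $h(x)$ agrees with the $g_1$-expansion of $x$.

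I do not expect a genuine obstacle here; the content is the bookkeeping observation that the digit sequence of a $g$-expansion is \emph{determined} by the nested sequence of intervals $(\mu_{n+1,k_n},\mu_{n+1,k_n+1})$, together with the fact that a monotone homeomorphism preserves such nestings and, by Lemma~\ref{lema:3.6}, matches the corresponding endpoints. The only point demanding care is reconciling the three cases of Notation~\ref{not:4.2} with the no-trailing-zeros convention, so that ``coincide'' is verified against the same normal form on both sides.
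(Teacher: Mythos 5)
Your proof is correct, and it follows exactly the route the paper implicitly relies on: the paper states Lemma~\ref{lema:5.6} without any proof, evidently regarding it as an immediate consequence of Lemma~\ref{lema:3.6} (matching of the points $\mu_{n,k}$), Remark~\ref{rem:3.7} (monotonicity of $h$), and the nested-interval definition of the expansion in Notation~\ref{not:4.2} --- which is precisely the bookkeeping you carry out. Your case analysis, including the observation that $g_2^n(h(x)) = h(g_1^n(x))$ keeps $h(x)$ in the same case of Notation~\ref{not:4.2} as $x$, and the appeal to Theorem~\ref{th:2} for uniqueness of the expansions, supplies everything the paper leaves tacit.
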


\begin{notation}
Denote by $h$ be the unique continuous invertible solution of the
functional equation~\eqref{eq:1.5}. Let a number $x\in [0,\, 1]$
be fixed till the end of the section and let~\eqref{eq:4.3} be its
$g_1$-expansion.
\end{notation}

\begin{remark}\label{rem:5.8}
If $x\notin g_1^{-n}(0)$ for some $n\geq 1$ then
$$
h_{n+1}'(x) =\frac{h(\widehat{x}_n^{\, +})
-h(\widehat{x}_n)}{\widehat{x}_n^{\, +} -\widehat{x}_n}.
$$
\end{remark}

\begin{remark}
If $x\notin g_1^{-n}(0)$ for some $n> 1$ then there exist $v\in
\mathcal{V}(g_1)$ and $w\in \mathcal{V}(g_2)$ such that
$$
h_n'(x) = h_{n-1}'(x)\cdot \frac{v}{w}.
$$
\end{remark}

\begin{remark}
If for some $x\in [0,\, 1]$ the derivative $h'(x)$ exists, then

1. If $x>0$, then there exists
$\lim\limits_{n\rightarrow\infty}h_n'(x-)$, and $h'(x)
=\lim\limits_{n\rightarrow\infty}h_n'(x-)$;

2. If $x<1$, then there exists
$\lim\limits_{n\rightarrow\infty}h_n'(x+)$, and $h'(x)
=\lim\limits_{n\rightarrow\infty}h_n'(x+)$.
\end{remark}

Suppose that for some $n\geq 1$ the $g_1$-expansion~\eqref{eq:4.3}
can be specified as~\eqref{eq:5.1}. The next fact follows
from~\eqref{eq:5.7}, Lemma~\ref{lema:5.6} and
Remark~\ref{rem:5.8}.

\begin{remark}\label{rem:5.11}
For every $i\geq 1$ $$ \left( \frac{v_-(g_2)}{v^+(g_1)}\right)^{i}
\cdot h_{n+1}'(x)\leq h_{n+i}'(x)\leq \left(
\frac{v^+(g_2)}{v_-(g_1)}\right)^{i} \cdot h_{n+1}'(x)
$$
\end{remark}

\begin{remark}\label{rem:5.12}
For every $i,\, n_0\leq i\leq t-n_0+1$

$$
h_{n+i}'(x) = h_{n_0+n}'(x)\cdot \left(
\frac{v_0(g_2)}{v_0(g_1)}\right)^{i-n_0}
$$
\end{remark}

\begin{lemma}\label{lema:5.13}
Suppose that $g_1^{n+1}(x)=0$ and $ s\in [\widehat{x}_n^{\, -},\,
\widehat{x}_n^{\, \mp})$ for $n>n_0$. Then
$$ v_-(g_2)\cdot \widehat{h}_{n+1}'(\widehat{x}_n-)\leq
\frac{h(\widehat{x}_n)-h(s)}{\widehat{x}_n-s}\leq
\widehat{h}_{n+1}'(\widehat{x}_n-)\cdot \frac{1}{v_(g_1)}.
$$
\end{lemma}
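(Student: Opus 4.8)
The plan is to turn the claim into a single comparison of secant slopes and then control that comparison with the branching ratios $\delta$. First I would use Lemma~\ref{lema:3.6} (equation~\eqref{eq:3.7}), which says that $h$ agrees with $\widehat{h}_{n+1}$ on $g_1^{-(n+1)}(0)$: the points $\widehat{x}_n^{\,-}=\mu_{n+1,k_n-1}(g_1)$ and $\widehat{x}_n=\mu_{n+1,k_n}(g_1)$ are consecutive nodes of $g_1^{-(n+1)}(0)$, so $\widehat{h}_{n+1}$ is linear between them and interpolates $h$ at the endpoints; hence
$$
\widehat{h}_{n+1}'(\widehat{x}_n-)=\frac{h(\widehat{x}_n)-h(\widehat{x}_n^{\,-})}{\widehat{x}_n-\widehat{x}_n^{\,-}}.
$$
To keep the computation readable I would abbreviate $A=\widehat{x}_n^{\,-}$, $M=\widehat{x}_n^{\,\mp}$, $B=\widehat{x}_n$. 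By Remark~\ref{rem:3.3} these equal $\mu_{n+2,2k_n-2}(g_1)$, $\mu_{n+2,2k_n-1}(g_1)$, $\mu_{n+2,2k_n}(g_1)$, so $M$ subdivides $[A,B]$, and Notation~\ref{not:4.9} gives $\frac{M-A}{B-A}=\delta_{n+1,k_n-1}(g_1)$. Applying~\eqref{eq:3.7} to the images and using the same notation for $g_2$, I also obtain $\frac{h(M)-h(A)}{h(B)-h(A)}=\delta_{n+1,k_n-1}(g_2)$.

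With this dictionary the two one-sided estimates are immediate from $s\in[A,M)$ and the monotonicity of $h$. For the \emph{upper} bound I would bound the numerator from above, $h(B)-h(s)\leq h(B)-h(A)$, and the denominator from below, $B-s>B-M=\bigl(1-\delta_{n+1,k_n-1}(g_1)\bigr)(B-A)$, to get
$$
\frac{h(B)-h(s)}{B-s}\leq\frac{1}{1-\delta_{n+1,k_n-1}(g_1)}\cdot\frac{h(B)-h(A)}{B-A}.
$$
For the \emph{lower} bound I would instead bound the numerator from below, $h(B)-h(s)\geq h(B)-h(M)=\bigl(1-\delta_{n+1,k_n-1}(g_2)\bigr)\bigl(h(B)-h(A)\bigr)$, and the denominator from above, $B-s\leq B-A$, to get
$$
\frac{h(B)-h(s)}{B-s}\geq\bigl(1-\delta_{n+1,k_n-1}(g_2)\bigr)\cdot\frac{h(B)-h(A)}{B-A}.
$$
Since the common factor $\frac{h(B)-h(A)}{B-A}$ is exactly $\widehat{h}_{n+1}'(\widehat{x}_n-)$, both inequalities are already in the shape of the claim.

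It remains only to replace the two $\delta$'s by the universal constant $v_-$, which is the sole step that uses more than monotonicity. Because $n>n_0$, iterating Remark~\ref{rem:4.13} down to index $n_0$ and then invoking Remark~\ref{rem:4.11} together with Notation~\ref{not:4.14} writes each $\delta_{n+1,k_n-1}(g_i)$ as a $\Rot$-image of some $v\in\mathcal{V}(g_i)$, so it equals either $v$ or $1-v$; in either case Notation~\ref{not:4.15} yields $1-\delta_{n+1,k_n-1}(g_i)\geq v_-(g_i)$. Substituting this into the two displays converts the upper bound into $\widehat{h}_{n+1}'(\widehat{x}_n-)/v_-(g_1)$ and the lower bound into $v_-(g_2)\,\widehat{h}_{n+1}'(\widehat{x}_n-)$, which is the assertion (reading the misprint $v_(g_1)$ as $v_-(g_1)$). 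I expect the main difficulty to be purely bookkeeping: confirming that $\widehat{x}_n^{\,\mp}$ splits $[\widehat{x}_n^{\,-},\widehat{x}_n]$ in the ratio $\delta_{n+1,k_n-1}$ for \emph{both} maps at once, and checking that the index $n>n_0$ and the range $k_n-1<2^n$ are admissible for the reduction of $\delta_{n+1,\cdot}$ to a genuine member of $\mathcal{V}$; once these are settled, the inequalities drop out of the monotonicity of $h$. The role of the hypothesis $g_1^{n+1}(x)=0$ is only to place $x$ at the node $\widehat{x}_n$, so that the bounded quantity is a genuine left secant slope of $h$ at $x$.
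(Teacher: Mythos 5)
Your proposal is correct and is essentially the paper's own proof: your two bounds $\frac{h(B)-h(M)}{B-A}\leq\frac{h(B)-h(s)}{B-s}\leq\frac{h(B)-h(A)}{B-M}$ are exactly the paper's corner slopes $k_{S_-X}$ and $k_{S^+X}$ through the points $S_-\bigl(\widehat{x}_n^{\,-},h(\widehat{x}_n^{\,\mp})\bigr)$ and $S^+\bigl(\widehat{x}_n^{\,\mp},h(\widehat{x}_n^{\,-})\bigr)$ of Fig.~\ref{fig-1}a, obtained there from the monotonicity of $h$ (Remark~\ref{rem:3.7}). The only cosmetic difference is presentation: you argue algebraically with the explicit ratios $\delta_{n+1,k_n-1}(g_i)$ and reduce them to $v_-(g_i)$ via Remarks~\ref{rem:4.13} and~\ref{rem:4.11}, while the paper phrases the same reduction geometrically and cites Remark~\ref{rem:4.16} and Lemma~\ref{lema:5.6} (and identifies the nodal slope via Remark~\ref{rem:5.8} rather than Lemma~\ref{lema:3.6}).
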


\begin{proof}
Since $g_1^{n+1}(x)=0$, then $x = \widehat{x}_n$. Denote
$A(\widehat{x}_n^{\, -}, h(\widehat{x}_n^{\, -}))$, $S(s, h(s))$,
$X(x,\, h(x))$, $S_-(\widehat{x}_n^{\, -}, h(\widehat{x}_n^{\,
\mp}))$ and $S^+(\widehat{x}_n^{\, \mp}, h(\widehat{x}_n^{\, -}))$
(see Fig.~\ref{fig-1}a).

Also let $k_{S_-X}$ be the tangent of $S_-X$, let $k_{SX}$ be
$k_{SX}$ and let $k_{S^+X}$ be the tangent  of $S^+X$. Then
 $$ k_{S_-X}\leq k_{SX}\leq k_{S^+X},
$$ because
$ s\in [\widehat{x}_n^{\, -},\, \widehat{x}_n^{\, \mp})$ and $h$
increase (by Remark~\ref{rem:3.7}).

By Remark~\ref{rem:4.16} and Lemma~\ref{lema:5.6}
$$
k_{S_-X}\geq \frac{(h(x)-h(\widehat{x}_n^{\, -}))\cdot
v_-(g_2)}{x- \widehat{x}_n^{\, -}}
$$ and $$
k_{S^+X} \leq \frac{h(x)-h(\widehat{x}_n^{\, -})}{(x -x_n^{\,
-})\cdot v_-(g_1)}.
$$ %
Now lemma follows from Remark~\ref{rem:5.8}.
\end{proof}

\begin{lemma}\label{lema:5.14}
Suppose that $x$ is $g_1$-infinite and $ s\in [\widehat{x}_n,\,
\widehat{x}_n^{\, +})$ for $n>n_0$. There exist $k_-$ and $k^+$,
independent on $x$, and $i\geq 1$ such that
$$
k_-\cdot h_{n+i}'(\widehat{x}_n-)  \leq
\frac{h(\widehat{x}_n)-h(s)}{\widehat{x}_n-s} \leq k^+\cdot
h_{n+i}'(\widehat{x}_n-)\ .
$$
\end{lemma}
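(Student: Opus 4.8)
The plan is to locate $s$ inside the refinement of the cell $I_{n+1,k_n}=[\widehat{x}_n,\widehat{x}_n^{\,+}]$ and to compare the secant slope $\frac{h(\widehat{x}_n)-h(s)}{\widehat{x}_n-s}$ with the slope of a linear piece of some approximation $h_{n+i}$ abutting the grid point $\widehat{x}_n$. We may assume $s\neq\widehat{x}_n$, since otherwise the quotient is undefined. As $s\in(\widehat{x}_n,\widehat{x}_n^{\,+})$, its $g_1$-expansion agrees with that of $\widehat{x}_n$ through the digit $x_n$ and then reads $x_0x_1\ldots x_n\,0\ldots0\,1\ldots$; let $i\ge 1$ be the unique index for which $s$ lies in the leftmost level-$(n+i)$ subcell of $I_{n+1,k_n}$ emanating from $\widehat{x}_n$ but not in the leftmost level-$(n+i+1)$ subcell. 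Consequently $s-\widehat{x}_n$ is squeezed between the lengths of these two leftmost subcells; and since $h$ increases (Remark~\ref{rem:3.7}) and, by Lemma~\ref{lema:5.6}, the $g_2$-expansion of $h(s)$ is governed by the same combinatorics, $h(s)-h(\widehat{x}_n)$ is likewise squeezed between the corresponding leftmost-subcell lengths for $g_2$.

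By Remark~\ref{rem:4.10} each refinement step multiplies a cell length by a factor lying in $[v_-,v^+]$, so the leftmost level-$(n+i)$ and level-$(n+i+1)$ subcell lengths differ only by such a factor, separately for $g_1$ and for $g_2$. Hence the secant slope $\frac{h(\widehat{x}_n)-h(s)}{\widehat{x}_n-s}$ is comparable, within a factor depending only on $v_-(g_1),v^+(g_1),v_-(g_2),v^+(g_2)$, to the ratio of the leftmost level-$(n+i)$ subcell lengths for $g_2$ and for $g_1$; by Remark~\ref{rem:5.8} this ratio is exactly the slope of $h_{n+i}$ on the piece immediately to the right of $\widehat{x}_n$, i.e. $h_{n+i}'(\widehat{x}_n+)$.

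It remains to pass from $h_{n+i}'(\widehat{x}_n+)$ to $h_{n+i}'(\widehat{x}_n-)$ with a loss independent of $i$ and of $x$. Here I would invoke Lemma~\ref{lema:5.5}: the leftmost descending chain from $\widehat{x}_n$ (obtained by appending zeros, cf.~\eqref{eq:5.7} and \eqref{eq:5.10}) and the rightmost descending chain into the cell immediately left of $\widehat{x}_n$ (obtained by appending ones) both have their refinement factors stabilize to $v_0(g_1)$, respectively $v_0(g_2)$, after an initial transient of at most $n_0$ steps, as follows from Remark~\ref{rem:4.20}. Writing $h_{n+i}'(\widehat{x}_n+)$ and $h_{n+i}'(\widehat{x}_n-)$ as quotients of these chain lengths, the identical powers of $v_0(g_1)$ and of $v_0(g_2)$ cancel, leaving at most $2n_0$ surviving factors in $[v_-,v^+]$ per map. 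Thus $h_{n+i}'(\widehat{x}_n-)$ and $h_{n+i}'(\widehat{x}_n+)$ differ by a bounded factor, and combining this with the previous paragraph produces constants $k_-$ and $k^+$, depending only on the two maps, for which $k_-\,h_{n+i}'(\widehat{x}_n-)\le \frac{h(\widehat{x}_n)-h(s)}{\widehat{x}_n-s}\le k^+\,h_{n+i}'(\widehat{x}_n-)$.

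The main obstacle is precisely this last reconciliation. A crude two-sided comparison through Remark~\ref{rem:4.16} would cost a factor $(v^+/v_-)^{i}$ that diverges with $i$, so the essential point is to exhibit the cancellation of the dominant $v_0$-powers via the run-stabilization encoded in Lemma~\ref{lema:5.5}; keeping the constants genuinely independent of $x$ is what the whole technical apparatus of Section~\ref{sec:5} is designed to deliver. The hypothesis that $x$ is $g_1$-infinite guarantees that $\widehat{x}_n\neq x$ and that the refinement issuing from $\widehat{x}_n$ continues indefinitely, so that the index $i$ and the two descending chains are well defined.
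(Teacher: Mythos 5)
Your argument proves a literal reading of the lemma's (evidently corrupted) wording --- a secant anchored at the grid point $\widehat{x}_n$ with $s$ to its right, compared with a one-sided slope of $h_{n+i}$ at $\widehat{x}_n$ --- but that is not the quantity the paper's proof estimates, and the difference is substantive. In the paper the difference quotient is anchored at the $g_1$-infinite point $x$ itself: the expansion is specialized as in~\eqref{eq:5.1} (a $1$, then $t$ zeros, then a $1$ after position $n$), the point $s$ runs over $[\widehat{x}_n,\,\widehat{x}_{n+1})$ to the \emph{left} of $x$, partitioned by the points $b_i$ of Calculation~\ref{calc:5.4}, and the slope $k_{SX}=\frac{h(x)-h(s)}{x-s}$ is sandwiched between the grid secants $K_{S_-X^+}$ and $K_{S^+X_-}$ chosen by~\eqref{eq:5.11} and~\eqref{eq:5.12}; these are bounded through the four ranges of $i$ relative to $n_0$ and $t$ using the length estimates \eqref{eq:5.4}--\eqref{eq:5.9} of Lemma~\ref{lema:5.5}, and then converted into uniform multiples of $h_{n+i}'(x)$ via Remarks~\ref{rem:5.8}, \ref{rem:5.11} and, crucially, the exact scaling of Remark~\ref{rem:5.12}, which is the paper's implementation of the ``dominant $v_0$-power cancellation'' you correctly identify as the heart of the matter. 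Since $x\neq\widehat{x}_n$ for a $g_1$-infinite $x$, your grid-anchored quotients do not control $\frac{h(x)-h(s)}{x-s}$ as $s\to x-$, and it is the latter that part 2 of Theorem~\ref{th:3} consumes; note also that the paper never needs your $+\!\to\!-$ reconciliation at all, because at an interior $g_1$-infinite point the slope $h_{n+i}'(x)$ is taken on the cell containing $x$ and is two-sided.

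Within your own route there is, moreover, one unclosed step, precisely at the reconciliation you flag as the main obstacle. When you write $h_{n+i}'(\widehat{x}_n+)$ and $h_{n+i}'(\widehat{x}_n-)$ as quotients of chain lengths and count ``at most $2n_0$ surviving factors per map,'' the two chains are rooted in the \emph{distinct} adjacent cells $I_{n+1,k_n}$ and $I_{n+1,k_n-1}$, whose lengths do not cancel against each other; your count silently assumes a uniform bound on $\frac{\# I_{n+1,k_n-1}(g_j)}{\# I_{n+1,k_n}(g_j)}$ for $j\in\{1,2\}$. This bound is true and provable by exactly your mechanism: the binary strings of $k_n$ and $k_n-1$ differ by a $10^r\leftrightarrow 01^r$ flip, so by Remark~\ref{rem:4.20} the per-step factors of the two cells agree outside a window of at most $n_0+1$ positions once both tails stabilize --- the all-zeros window gives the factor $v_0$ directly, and the all-ones window gives $\delta=1-v_0$ and hence step factor $\Rot^1(\delta)=v_0$, which is exactly the computation~\eqref{eq:5.10}. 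Equivalently, one should root both descending chains at the common ancestor cell at the level where the grid point $\widehat{x}_n$ is created, after which the base lengths cancel identically. As written, however, the step is asserted rather than proved, and together with the anchoring issue above it leaves your proposal short of the lemma as the paper proves and uses it.
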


\begin{proof}

Define $\{b_i,\, 0\leq i\leq t\}$ by~\eqref{eq:5.2}.

Notice, that for any $s_-,\, s^+,\, x_-$ and $x^+$ such that $$
\left\{ \begin{array}{l} s_-\leq s\leq s^+<x^-\leq x\leq x^+\\
s_-<s^+<x_-<x^+
\end{array}\right.
$$ if follows from Remark~\ref{rem:3.7} that
$$
k_{S_-X^+}\leq k_{SX}\leq k_{S^+X_-}, $$ %
where:

1. $k_{SX}$ is the tangent of the line, which connects points $(s,
h(s))$ and $(x, h(x))$;

2. $k_{S_-X^+}$ is the tangent of the line, which connects points
$(s_-, h(s^+))$ and $(x^+, h(x_-))$ and, finally

3. $k_{S^+X_-}$ is the tangent of the line, which connects points
$(s^+, h(s_-))$ and $(x_-, h(x^+))$.

By definitions, $$ k_{SX} =\frac{h(x)-h(s)}{x-s}, $$
$$
k_{S_-X^+} = \frac{h(x_-)-h(s^+)}{x^+-s_-}, $$ and
$$
k_{S^+X_-} = \frac{h(x^+)-h(s_-)}{x_--s^+}.
$$

\begin{figure}[htbp]
\begin{minipage}[h]{0.45\linewidth}
\begin{center}
\begin{picture}(110,110)
\put(0,0){\line(0,1){96}} \put(0,0){\line(1,0){96}}

\put(0,48){\line(1,0){96}} 
\put(0,96){\line(1,0){96}}

\put(48,0){\line(0,1){96}} 
\put(96,0){\line(0,1){96}}

\put(0,0){\circle*{5}} \put(96,96){\circle*{5}} \put(-17,0){$A$}
\put(100,100){$X$}


\put(48,48){\circle*{3}} 


\put(21,24){\circle*{3}} \put(15,11){$S$}

\put(48,0){\circle*{3}} \put(35,5){$S^+$}

\put(0,48){\circle*{3}} \put(5,35){$S_-$}

\qbezier(0,48)(48,72)(96,96) \qbezier(48,0)(72,48)(96,96)
\end{picture}
\vskip 3mm \centerline{a) Construction of} \centerline{$S$, $S^+$,
$S_-$ and $X$}\end{center}
\end{minipage}
\hfill
\begin{minipage}[h]{0.45\linewidth}
\begin{center}
\begin{picture}(110,110)
\put(0,0){\line(0,1){96}} \put(0,0){\line(1,0){96}}

\put(0,48){\line(1,0){96}} \put(48,72){\line(1,0){48}}
\put(0,96){\line(1,0){96}}

\put(48,0){\line(0,1){96}} \put(72,48){\line(0,1){48}}
\put(96,0){\line(0,1){96}}

\put(0,0){\circle*{5}} \put(96,96){\circle*{5}} \put(-17,0){$A$}
\put(100,100){$B$}

\put(80,80){\circle*{3}} \put(80,85){$X$}

\put(48,48){\circle*{3}} \put(37,37){$C$}

\put(21,34){\circle*{3}} \put(15,21){$S$}

\qbezier(0,48)(48,60)(96,72) \qbezier(48,0)(60,48)(72,96)

\put(0,48){\circle*{3}} \put(48,0){\circle*{3}}
\put(72,96){\circle*{3}} \put(96,72){\circle*{3}}

\put(2,55){$S_-$}%
\put(50,3){$S^+$}

\put(53,85){$X_-$}%
\put(78,57){$X^+$}

\end{picture}
\vskip 3mm \centerline{b) Case $t =0$}\centerline{in
Lemma~\ref{lema:5.14}}\end{center}
\end{minipage}
\hfill \caption{Proof of Lemmas~\ref{lema:5.13}
and~\ref{lema:5.14}} \label{fig-1}
\end{figure}

If $s\in [b_i, b_{i+1})$ for some $i, 0\leq i< t$ then take
\begin{equation}\label{eq:5.11}
\begin{array}{ll}
s_- = b_i,& s^+ =b_{i+1},\\
x_- = \widehat{x}_{n+1},& x^+ =\widehat{x}_{n+i+2}^{\, +}.
\end{array}
\end{equation}

And if $s\in [b_t, \widehat{x}_{n+1})$ then take
\begin{equation}\label{eq:5.12}
\begin{array}{ll}
s_- = b_t,& s^+ =\widehat{x}_{n+1},\\
x_- = \widehat{x}_{n+t+2},& x^+ =\widehat{x}_{n+t+1}^{\, +}.
\end{array}
\end{equation}

The case $t=0$ is presented at Figure~\ref{fig-1}b.

\begin{figure}[htbp]
\begin{minipage}[h]{0.45\linewidth}
\begin{center}
\begin{picture}(110,110)
\put(0,0){\line(0,1){96}} \put(0,0){\line(1,0){96}}

\put(0,48){\line(1,0){96}} \put(48,72){\line(1,0){48}}
\put(0,96){\line(1,0){96}}

\put(48,0){\line(0,1){96}} \put(72,48){\line(0,1){48}}
\put(96,0){\line(0,1){96}}

\put(0,0){\circle*{5}} \put(96,96){\circle*{5}} \put(-17,0){$A$}
\put(100,100){$B$}

\qbezier(24,0)(24,24)(24,48) \qbezier(0,24)(24,24)(48,24)

\put(60,60){\circle*{3}} \put(62,55){$X$}


\put(15,15){\circle*{3}} \put(10,3){$S$}


\qbezier(0,24)(36,36)(72,48) \qbezier(24,0)(36,36)(48,72)
\put(0,24){\circle*{3}} \put(24,0){\circle*{3}}
\put(48,72){\circle*{3}} \put(72,48){\circle*{3}}
\end{picture}
\vskip 3mm \centerline{a) $s\in [\widehat{x}_n,
\widehat{x}_{n+1}^{\, \mp})$}\end{center}
\end{minipage}
\hfill
\begin{minipage}[h]{0.45\linewidth}
\begin{center}
\begin{picture}(110,110)
\put(0,0){\line(0,1){96}} \put(0,0){\line(1,0){96}}

\put(0,48){\line(1,0){96}} \put(48,72){\line(1,0){48}}
\put(0,96){\line(1,0){96}}

\put(48,0){\line(0,1){96}} \put(72,48){\line(0,1){48}}
\put(96,0){\line(0,1){96}}

\put(0,0){\circle*{5}} \put(96,96){\circle*{5}}

\qbezier(24,0)(24,24)(24,48) \qbezier(0,24)(24,24)(48,24)

\put(66,66){\circle*{3}} 


\put(36,36){\circle*{3}} 

\qbezier(60,48)(60,60)(60,72) \qbezier(48,60)(60,60)(72,60)

\qbezier(24,48)(54,54)(72,60) \qbezier(48,24)(54,54)(60,72)
\put(24,48){\circle*{3}} \put(48,24){\circle*{3}}
\put(72,60){\circle*{3}} \put(60,72){\circle*{3}}
\end{picture}
\vskip 3mm \centerline{b) $s\in [\widehat{x}_{n+1}^{\, \mp},
\widehat{x}_{n+1})$}\end{center}
\end{minipage}
\hfill \caption{Case $t=1$ in Lemma~\ref{lema:5.13}} \label{fig-2}
\end{figure}
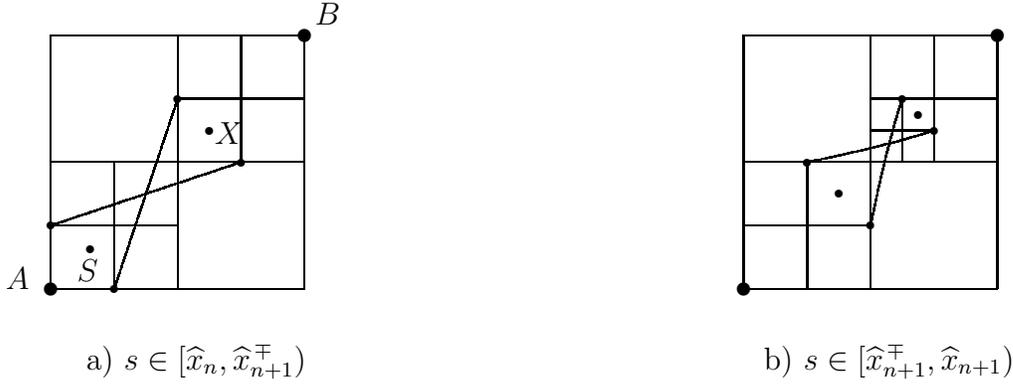

The case $t=1$ is illustrated at Figure~\ref{fig-2}.

Suppose that $s\in [b_i, b_{i+1})$ for some $i,\, 0\leq i<t$, and
that the numbers $s_-,\, s^+,\, x_-$ and $x^+$ are defined
by~\eqref{eq:5.11}.

Since $ x_- -s^+ = \widehat{x}_{n+1} - b_{i+1}$ we can use
Lemma~\ref{lema:5.5} directly to obtain the restrictions for $x_-
-s^+$. In order to use the same Lemma for the restriction of $x^+
-s_- =\widehat{x}^+_{n+i+2} -b_i$, we will write $$ x^+ -s_- =
(\widehat{x}^+_{n+i+2}- \widehat{x}_{n+1}) + (\widehat{x}_{n+1}
-b_i)$$ and then use Lemma~\ref{lema:5.5} to obtain the
restriction of the each summand.

For $0\leq i\leq n_0$ by~\eqref{eq:5.4} of Lemma~\ref{lema:5.5},
\begin{equation}\label{eq:5.13}
x_- -s^+ \leq \# [\widehat{x}_n,\, \widehat{x}_n^+]
\end{equation} and
\begin{equation}\label{eq:5.14}
\# [\widehat{x}_n,\, \widehat{x}_n^+]\cdot (v_-(g_1))^{n_0+1} \leq
\widehat{x}_{n+1} -s_- \ .\end{equation}

If $n_0+1\leq i<t$, then by~\eqref{eq:5.5} of
Lemma~\ref{lema:5.5},
\begin{equation}\label{eq:5.15}
x_- -s^+ \leq \# [\widehat{x}_n,\, \widehat{x}_n^+] \cdot
(v^+(g_1))^{n_0+2}\cdot (v_0(g_1))^{i-n_0}
\end{equation} and
\begin{equation}\label{eq:5.16}
\# ([\widehat{x}_n,\, \widehat{x}_n^+])\cdot (v_-)^{n_0+2}\cdot
v_0^{i-n_0-1} \leq \widehat{x}_{n+1} -s_-
\end{equation}

For $i,\, 0\leq i\leq n_0-2$ it follows from~\eqref{eq:5.6} of
Lemma~\ref{lema:5.5} that

\begin{equation}\label{eq:5.17}
\# [\widehat{x}_{n}, \widehat{x}_n^+]\cdot (v_-(g_1))^{n_0}\leq
x^+ -\widehat{x}_{n+1} \ .\end{equation}

If $n_0-1\leq i\leq t-n_0-1$ then it follows from~\eqref{eq:5.8}
of Lemma~\ref{lema:5.5} that
\begin{equation}\label{eq:5.18}
\# [\widehat{x}_{n}, \widehat{x}_n^+]\cdot (v_-(g_1))^{n_0}\cdot
(v_0(g_1))^{i+2-n_0}\leq x^+ -\widehat{x}_{n+1} \ .\end{equation}

If $t-n_0\leq i<t$, then it follows from~\eqref{eq:5.9} of
Lemma~\ref{lema:5.5} that \begin{equation}\label{eq:5.19} \#
[\widehat{x}_{n}, \widehat{x}_n^+]\cdot v_0^{t-2n_0+1}\cdot
(v_-)^{2n_0} \ \ \ \leq x^+ -\widehat{x}_{n+1}
\end{equation}

If $0\leq i\leq n_0-2$ then if follows from~\eqref{eq:5.14}
and~\eqref{eq:5.17} then \begin{equation}\label{eq:5.20}\#
[\widehat{x}_{n}, \widehat{x}_n^+]\cdot 2\cdot (v_-)^{n_0+1}\leq
x^+ -s_-
\end{equation}

If $n_0-1\leq i\leq n_0$ then it follows from~\eqref{eq:5.14}
and~\eqref{eq:5.18} that \begin{equation}\label{eq:5.21}\#
[\widehat{x}_{n}, \widehat{x}_n^+]\cdot 2\cdot (v_-)^{n_0+1}\cdot
v_0^{i+2-n_0}\leq x^+ -s_-
\end{equation}

If $n_0+1\leq i\leq t-n_0-1$ then, by~\eqref{eq:5.16}
and~\eqref{eq:5.18} obtain
\begin{equation}\label{eq:5.22}\#
[\widehat{x}_{n}, \widehat{x}_n^+]\cdot 2\cdot (v_-)^{n_0+2}\cdot
v_0^{i+2-n_0}\leq x^+ -s_-
\end{equation}

If $t-n_0 \leq i\leq t-1$ then, by~\eqref{eq:5.16}
and~\eqref{eq:5.19} obtain \begin{equation}\label{eq:5.23} \#
[\widehat{x}_{n}, \widehat{x}_n^+]\cdot 2 \cdot (v_-)^{2n_0} \cdot
v_0^{t-n_0} \leq x^+ -s_- \ .
\end{equation}

$$ K_{S_-X^+} \geq \frac{ \min(x^+ -s_-)}{\max(x_- -s^+)}
$$ and $$
K_{S^+X_-} \leq \frac{\max(x_- -s^+)}{\min(x^+ -s_-)}
$$

If $0\leq i\leq n_0-2$ then, by~\eqref{eq:5.13}
and~\eqref{eq:5.20}

\begin{equation}\label{eq:5.24}
K_{S_-X^+} \geq \frac{\# [\widehat{y}_n,\, \widehat{y}_n^+]\cdot
2\cdot (v_-(g_2))^{n_0+1}}{\# [\widehat{x}_{n}, \widehat{x}_n^+]}
\end{equation} and \begin{equation}\label{eq:5.25} K_{S^+X_-} \leq \frac{\# [\widehat{y}_n,\,
\widehat{y}_n^+]}{\# [\widehat{x}_{n}, \widehat{x}_n^+]\cdot
2\cdot (v_-(g_1))^{n_0+1}}
\end{equation}

If $n_0-1\leq i\leq n_0$ then by~\eqref{eq:5.13}
and~\eqref{eq:5.21}
\begin{equation}\label{eq:5.26}
K_{S_-X^+} \geq \frac{\# [\widehat{y}_n,\, \widehat{y}_n^+]\cdot
2\cdot (v_-(g_2))^{n_0+1}\cdot (v_0(g_2))^{i+2-n_0}}{\#
[\widehat{x}_{n}, \widehat{x}_n^+]}
\end{equation} and \begin{equation}\label{eq:5.27}
K_{S^+X_-} \leq \frac{\# [\widehat{y}_n,\, \widehat{y}_n^+]}{\#
[\widehat{x}_{n}, \widehat{x}_n^+]\cdot 2\cdot
(v_-(g_1))^{n_0+1}\cdot (v_0(g_1))^{i+2-n_0}}
\end{equation}

If $n_0+1\leq i\leq t-n_0-1$ then by~\eqref{eq:5.15}
and~\eqref{eq:5.22} \begin{equation}\label{eq:5.28} K_{S_-X^+}
\geq \frac{\# [\widehat{y}_n,\, \widehat{y}_n^+]\cdot 2\cdot
(v_-(g_2))^{n_0+2}\cdot (v_0(g_2))^{i+2-n_0}}{\# [\widehat{x}_{n},
\widehat{x}_n^+]\cdot (v^+(g_1))^{n_0+2}\cdot (v_0(g_1))^{i-n_0}}
\end{equation} and \begin{equation}\label{eq:5.29}
K_{S^+X_-} \leq \frac{\# [\widehat{y}_n,\, \widehat{y}_n^+]\cdot
(v^+(g_2))^{n_0+2}\cdot (v_0(g_2))^{i-n_0}}{\# [\widehat{x}_{n},
\widehat{x}_n^+]\cdot 2\cdot (v_-(g_1))^{n_0+2}\cdot
(v_0(g_1))^{i+2-n_0}}
\end{equation}

If $t-n_0 \leq i\leq t-1$ then by~\eqref{eq:5.15}
and~\eqref{eq:5.23}
\begin{equation}\label{eq:5.30} K_{S_-X^+} \geq \frac{\# [\widehat{y}_n,\,
\widehat{y}_n^+]\cdot 2 \cdot (v_-(g_2))^{2n_0} \cdot
(v_0(g_2))^{t-n_0}}{\# [\widehat{x}_{n}, \widehat{x}_n^+]\cdot
(v^+(g_1))^{n_0+2}\cdot (v_0(g_1))^{i-n_0}}
\end{equation} and \begin{equation}\label{eq:5.31}
K_{S^+X_-} \leq \frac{\# [\widehat{y}_n,\, \widehat{y}_n^+]\cdot
(v^+(g_2))^{n_0+2}\cdot (v_0(g_2))^{i-n_0}}{\# [\widehat{x}_{n},
\widehat{x}_n^+]\cdot 2 \cdot (v_-(g_1))^{2n_0} \cdot
(v_0(g_1))^{t-n_0}} \end{equation}

Due to Remark~\ref{rem:5.8}, it follows from~\eqref{eq:5.24},
\eqref{eq:5.25}, \eqref{eq:5.26} and~\eqref{eq:5.27} that for any
$i,\, 0\leq i\leq n_0$
$$
K_{S_-X^+}\geq h_{n+1}'(x)\cdot 2\cdot (v_-(g_2))^{n_0+1}\cdot
(v_0(g_2))^2
$$
and
$$
K_{S^+X_-}\leq \frac{h_{n+1}'(x)}{ 2\cdot (v_-(g_1))^{n_0+1}\cdot
(v_0(g_1))^2}.
$$

By Remark~\ref{rem:5.11}, \begin{equation}\label{eq:5.32}\left(
\frac{v_-(g_2)}{v^+(g_1)}\right)^{n_0} \cdot h_{n+1}'(x)\leq
h_{n+n_0}'(x)\leq \left( \frac{v^+(g_2)}{v_-(g_1)}\right)^{n_0}
\cdot h_{n+1}'(x).\end{equation}

If $n_0+1\leq i\leq t-n_0-1$, then it follows from~\eqref{eq:5.28}
and~\eqref{eq:5.29}, Remarks~\ref{rem:5.8} and~\ref{rem:5.12}, and
from just obtained~\eqref{eq:5.32} that
$$ K_{S_-X^+} \geq h_{n+1}'(x)\cdot
\frac{2\cdot (v_-(g_2))^{n_0+2}\cdot
(v_0(g_2))^{i+2-n_0}}{(v^+(g_1))^{n_0+2}\cdot
(v_0(g_1))^{i-n_0}}\geq
$$
$$
\geq h_{n+1}'(x)\cdot \left(
\frac{v^+(g_2)}{v_-(g_1)}\right)^{n_0}\cdot \frac{2\cdot
(v_-(g_2))^{n_0+2}\cdot
(v_0(g_2))^{i+2-n_0}}{(v^+(g_1))^{n_0+2}\cdot
(v_0(g_1))^{i-n_0}}\cdot \left(
\frac{v_-(g_1)}{v^+(g_2)}\right)^{n_0}\geq
$$$$
\geq h'_{n_0+n}(x)\cdot \left(
\frac{v_0(g_2)}{v_0(g_1)}\right)^{i-n_0}\cdot \frac{2\cdot
(v_-(g_2))^2\cdot (v_0(g_2))^2}{(v^+(g_1))^2} \cdot \left(
\frac{v_-(g_1)\cdot v_-(g_2)}{v^+(g_1)\cdot
v^+(g_2)}\right)^{n_0}\geq
$$$$
\geq h'_{n+i}(x)\cdot \frac{2\cdot (v_-(g_2))^2\cdot
(v_0(g_2))^2}{(v^+(g_1))^2} \cdot \left( \frac{v_-(g_1)\cdot
v_-(g_2)}{v^+(g_1)\cdot v^+(g_2)}\right)^{n_0}
$$ and $$K_{S^+X_-} \leq h_{n+1}'(x)\cdot \frac{ (v^+(g_2))^{n_0+2}\cdot
(v_0(g_2))^{i-n_0}}{ 2\cdot (v_-(g_1))^{n_0+2}\cdot
(v_0(g_1))^{i+2-n_0}}\leq
$$$$ \leq h_{n+1}'(x)\cdot \left( \frac{v_-(g_2)}{v^+(g_1)}\right)^{n_0}
\cdot \frac{ (v^+(g_2))^{n_0+2}\cdot (v_0(g_2))^{i-n_0}}{ 2\cdot
(v_-(g_1))^{n_0+2}\cdot (v_0(g_1))^{i+2-n_0}}\cdot \left(
\frac{v^+(g_1)}{v_-(g_2)}\right)^{n_0}\leq
$$
$$ \leq h_{n_0+n}'(x)\cdot \left( \frac{v_0(g_2)}{v_0(g_1)}\right)^{i-n_0}\cdot
\frac{ (v^+(g_2))^{n_0+2}}{ 2\cdot (v_-(g_1))^{n_0+2}\cdot
(v_0(g_1))^2}\cdot \left(
\frac{v^+(g_1)}{v_-(g_2)}\right)^{n_0}\leq
$$$$
\leq h'_{n+i}(x)\cdot \frac{ (v^+(g_2))^2}{2\cdot
(v_0(g_1))^2\cdot (v_-(g_2))^2} \cdot \left( \frac{v^+(g_1)\cdot
v^+(g_2)}{v_-(g_1)\cdot v_-(g_2)}\right)^{n_0}.
$$

If $t-n_0\leq i\leq t-1$, then it follows from~\eqref{eq:5.30}
\eqref{eq:5.31}, \eqref{eq:5.32} and Remarks~\ref{rem:5.8}
and~\ref{rem:5.12} that
$$ K_{S_-X^+} \geq h_{n+1}'(x)\cdot \frac{
2 \cdot (v_-(g_2))^{2n_0} \cdot (v_0(g_2))^{t-n_0}}{
(v^+(g_1))^{n_0+2}\cdot (v_0(g_1))^{i-n_0}}\geq $$
$$ \geq h_{n_0+n}'(x)\cdot \frac{
2 \cdot (v_-(g_2))^{2n_0} \cdot (v_0(g_2))^{t-n_0}\cdot
(v_-(g_1))^{n_0}}{ (v^+(g_1))^{n_0+2}\cdot (v_0(g_1))^{i-n_0}
\cdot (v^+(g_2))^{n_0}}\geq
$$
$$
\geq h_{n+i}'(x)\cdot \frac{ 2 \cdot (v_-(g_2))^{2n_0} \cdot
(v_0(g_2))^{t-i}\cdot (v_-(g_1))^{n_0}}{ (v^+(g_1))^{n_0+2}\cdot
(v^+(g_2))^{n_0}}\geq
$$$$
\geq h_{n+i}'(x)\cdot \frac{ 2 \cdot (v_-(g_2))^{2n_0} \cdot
(v_0(g_2))^{n_0}\cdot (v_-(g_1))^{n_0}}{ (v^+(g_1))^{n_0+2}\cdot
(v^+(g_2))^{n_0}}=
$$$$
= h_{n+i}'(x)\cdot \frac{ 2 \cdot (v_-(g_2))^{n_0} }{
(v^+(g_1))^2}\cdot \left(  \frac{v_-(g_1)\cdot v_-(g_2)\cdot
v_0(g_2)}{v^+(g_1)\cdot v^+(g_2)}\right)^{n_0}
$$
and
$$ K_{S^+X_-}
\leq h_{n+1}'(x)\cdot\frac{ (v^+(g_2))^{n_0+2}\cdot
(v_0(g_2))^{i-n_0}}{ 2 \cdot (v_-(g_1))^{2n_0} \cdot
(v_0(g_1))^{t-n_0}}\leq $$
$$ \leq h_{n_0+n}'(x)\cdot\frac{ (v^+(g_2))^{n_0+2}\cdot
(v_0(g_2))^{i-n_0}\cdot (v^+(g_1))^{n_0}}{ 2 \cdot
(v_-(g_1))^{2n_0} \cdot (v_0(g_1))^{t-n_0}\cdot
(v_-(g_2))^{n_0}}\leq $$$$ \leq h_{n+i}'(x)\cdot\frac{
(v^+(g_2))^{n_0+2}\cdot (v^+(g_1))^{n_0}}{ 2 \cdot
(v_-(g_1))^{2n_0} \cdot (v_0(g_1))^{t-i}\cdot
(v_-(g_2))^{n_0}}\leq $$
$$ \leq h_{n+i}'(x)\cdot\frac{
(v^+(g_2))^{n_0+2}\cdot (v^+(g_1))^{n_0}}{ 2 \cdot
(v_-(g_1))^{2n_0} \cdot (v_0(g_1))^{n_0}\cdot (v_-(g_2))^{n_0}} =
$$$$
=h_{n+i}'(x)\cdot\frac{ (v^+(g_2))^2}{ 2 \cdot
(v_-(g_1))^{n_0}}\cdot \left( \frac{ v^+(g_2)\cdot v^+(g_1)}{
v_-(g_1) \cdot v_-(g_2)\cdot v_0(g_1)}\right)^{n_0}.
$$ This proves lemma for the case~\eqref{eq:5.11}.

The case~\eqref{eq:5.12} can be considered analogously.
\end{proof}

In the same manner as in Lemmas~\ref{lema:5.13}
and~\ref{lema:5.14} we can prove the following lemma.

\begin{lemma}\label{lema:5.15}
Suppose that $x<1$. There exist $k_-$ and $k^+$, independent
on~$x$, and $i\geq 1$ such that for any $n\geq n_0$ and $ s\in
(\widehat{x}_n,\, \widehat{x}_n^{\, +}]$ there exits $i\geq 1$
such that
$$
k_-\cdot h_{n+i}'(\widehat{x}_n+)  \leq
\frac{h(\widehat{x}_n)-h(s)}{\widehat{x}_n-s} \leq k^+\cdot
h_{n+i}'(\widehat{x}_n+)\ .
$$
\end{lemma}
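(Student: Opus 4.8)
The proof mirrors that of Lemma~\ref{lema:5.14}, with every construction reflected from the left of $\widehat{x}_n$ to its right and the left-hand slope $h_{n+i}'(\widehat{x}_n-)$ replaced throughout by the right-hand slope $h_{n+i}'(\widehat{x}_n+)$. The role played there by the breakpoints $b_i=\widehat{x}_{n+i+1}^{\,-}$ is now played by the successive right-neighbours of $\widehat{x}_n$, namely the points $\mu_{n+1+j,\,2^{j}k_n+1}$ obtained by appending a zero to $k_n$ repeatedly; these decrease to $\widehat{x}_n$, and $[\widehat{x}_n,\,\mu_{n+1+j,\,2^{j}k_n+1}]$ is the closure of $I_{n+1+j,\,2^{j}k_n}$. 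By Remark~\ref{rem:4.20} (as in the computation~\eqref{eq:5.10} and part~4 of Lemma~\ref{lema:5.5}) these lengths contract by the constant factor $v_0=\delta_{n_0-1,0}$ once $j\geq n_0$, which is the $\Rot$-image of the contraction by $1-v_0$ governing the run of zeros in the pattern~\eqref{eq:5.1}.

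For $s\in(\widehat{x}_n,\,\widehat{x}_n^{\,+}]$ I would locate $s$ between two consecutive right-neighbours and choose the four comparison abscissae $s_-,s^+,x_-,x^+$ as in~\eqref{eq:5.11}--\eqref{eq:5.12}, reflected about $\widehat{x}_n$. Since $h$ increases (Remark~\ref{rem:3.7}), the secant slope $\tfrac{h(\widehat{x}_n)-h(s)}{\widehat{x}_n-s}$ is sandwiched between $k_{S_-X^+}$ and $k_{S^+X_-}$, each of which is a quotient of two interval lengths. Applying the estimates~\eqref{eq:5.4}--\eqref{eq:5.9} of Lemma~\ref{lema:5.5} to numerator and denominator separately, together with Remark~\ref{rem:4.16} and Lemma~\ref{lema:5.6} (so that the $g_2$-counterpart $\widehat{y}_n=h(\widehat{x}_n)$ satisfies $\#[\widehat{y}_n,\widehat{y}_n^{\,+}]/\#[\widehat{x}_n,\widehat{x}_n^{\,+}]=h_{n+1}'(\widehat{x}_n+)$ by Remark~\ref{rem:5.8}), bounds these quotients by $h_{n+1}'(\widehat{x}_n+)$ times explicit powers of $v_-,v^+,v_0$ for $g_1$ and $g_2$. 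The case split over $0\le i\le n_0$, $n_0+1\le i\le t-n_0-1$ and $t-n_0\le i\le t-1$ then yields chains of inequalities of the same shape as those displayed at the close of the proof of Lemma~\ref{lema:5.14}, now with $h_{n+i}'(\widehat{x}_n+)$ in place of the left-hand derivative.

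The main obstacle, as in Lemma~\ref{lema:5.14}, is not any individual estimate but the verification that, after telescoping by Remarks~\ref{rem:5.11} and~\ref{rem:5.12}, the surviving factors collapse into constants $k_-,k^+$ depending only on $n_0$ and on $v_-,v^+,v_0$ for the two maps, hence independent of $x$, $n$ and $s$. This is precisely where the uniform geometric bounds of Remark~\ref{rem:4.16} and the exact factor $v_0$ of~\eqref{eq:5.7} are indispensable: the $i$-dependent powers of $v_0(g_1)$ and $v_0(g_2)$ must cancel between the chosen approximation index $n+i$ and the bounding ratio, leaving only a fixed power determined by $n_0$. The degenerate case of an empty block and the case where $s$ lies in the innermost sub-interval abutting $\widehat{x}_n$ are handled by the reflected analogue of~\eqref{eq:5.12}, exactly as the corresponding case was dispatched in Lemma~\ref{lema:5.14}; and the weaker hypothesis $x<1$ (in place of $g_1$-infiniteness) suffices because the right-neighbours of $\widehat{x}_n$ are available for every $x<1$, including the $g_1$-finite ones.
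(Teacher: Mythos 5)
Your proposal is correct and takes essentially the same approach as the paper: the paper in fact offers no written proof of Lemma~\ref{lema:5.15} beyond the remark that it can be proved ``in the same manner as in Lemmas~\ref{lema:5.13} and~\ref{lema:5.14}'', and your left--right reflection of the proof of Lemma~\ref{lema:5.14} is precisely that intended argument. Your identification of the right-neighbours $\mu_{n+1+j,\,2^{j}k_n+1}$ decreasing to $\widehat{x}_n$ with eventual exact contraction factor $v_0$, the sandwich of the secant slope between $k_{S_-X^+}$ and $k_{S^+X_-}$ via monotonicity of $h$ (Remark~\ref{rem:3.7}), and the telescoping through Remarks~\ref{rem:5.11} and~\ref{rem:5.12} to obtain constants independent of $x$, $n$ and $s$ supply more detail than the paper itself records.
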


Suppose that $x$ is $g$-finite and $\widetilde{n}$ is such that
$g_1^{\widetilde{n}}(x)=0$.

Denote $I_{n,k} = (\mu_{n,k}(g_1),\, \mu_{n,k+1}(g_1))$. For any
$t\in I_{n,k}$ denote $$ \Delta_L(I_{n,k}, t) = \left|
\frac{h(t)-\mu_{n,k}(g_2)}{h_n'(t)\cdot
(t-\mu_{n,k}(g_1))}-1\right|
$$ and $$ \Delta_R(I_{n,k}, t) = \left|
\frac{\mu_{n,k+1}(g_2)-h(t)}{h_n'(t)\cdot
(\mu_{n,k+1}-t)}-1\right|\ .
$$

Notice that the following conditions are equivalent:

1. $\Delta_L(I_{n,k}, t) =0$;

2. $\Delta_R(I_{n,k}, t) =0$;

3. The point $(t, h(t))$ belongs to the graph of $h_n$.

The next lemma follows from Lemma~\ref{lema:2.8}.

\begin{lemma}\label{lema:5.16}
For every $n>n_0$, every $k,\, 0\leq k< 2^{n-1}$ and $t\in
I_{n,k}$ we have that

1. If $g_1$ increase on $I_{n,k}$, then $$ \Delta_L(I_{n,k}, t) =
\Delta_L(g_1(I_{n,k}), g_1(t))$$ and $$ \Delta_R(I_{n,k}, t) =
\Delta_R(g_1(I_{n,k}), g_1(t)).
$$

2. If $g_1$ decrease on $I_{n,k}$, then $$ \Delta_L(I_{n,k}, t) =
\Delta_R(g_1(I_{n,k}), g_1(t))$$ and $$ \Delta_R(I_{n,k}, t) =
\Delta_L(g_1(I_{n,k}), g_1(t)).$$
\end{lemma}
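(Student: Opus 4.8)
The plan is to recognize that, up to taking $|\,\cdot - 1\,|$, both $\Delta_L(I_{n,k},t)$ and $\Delta_R(I_{n,k},t)$ are exactly the chord-ratio product appearing on the left-hand side of Lemma~\ref{lema:2.8}, and then simply to quote that lemma. I would fix $n>n_0$ and $k$ and abbreviate $a=\mu_{n,k}(g_1)$, $b=\mu_{n,k+1}(g_1)$, $c=t$. Two preliminary observations make everything fit. First, since $n>n_0$, the interval $[a,b]$ carries no kink of $g_1$ in its interior (all kinks of $g_1$ lie in $g_1^{-n_0}(0)\subseteq g_1^{-n}(0)$, because $g_1(0)=0$), so $g_1$ is affine on $[a,b]$; and by~\eqref{eq:3.7} the image $h([a,b])=[\mu_{n,k}(g_2),\mu_{n,k+1}(g_2)]$ likewise carries no kink of $g_2$, so $g_2$ is affine there. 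These are precisely the hypotheses of Lemma~\ref{lema:2.8}. Second, $h_n$ is affine on $I_{n,k}$ with slope $h_n'(t)=\bigl(h(b)-h(a)\bigr)/(b-a)$, while $h(a)=\mu_{n,k}(g_2)$ and $h(b)=\mu_{n,k+1}(g_2)$ by~\eqref{eq:3.7}. Substituting these into the defining formulas turns them into
$$\Delta_L(I_{n,k},t)=\left|\frac{h(c)-h(a)}{c-a}\cdot\frac{b-a}{h(b)-h(a)}-1\right|,\qquad \Delta_R(I_{n,k},t)=\left|\frac{h(b)-h(c)}{b-c}\cdot\frac{b-a}{h(b)-h(a)}-1\right|.$$

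Next I would identify the image interval. By Calculation~\ref{calc:3.4} and Remark~\ref{rem:4.17}, $g_1(I_{n,k})$ is a level-$(n-1)$ interval $I_{n-1,k'}(g_1)$ whose endpoints are $g_1(a)$ and $g_1(b)$: if $g_1$ increases on $I_{n,k}$ then $g_1(a)=\mu_{n-1,k'}(g_1)$ and $g_1(b)=\mu_{n-1,k'+1}(g_1)$, while if $g_1$ decreases these two are interchanged. In either case the same chord-ratio rewriting applies to $\Delta_L$ and $\Delta_R$ over $I_{n-1,k'}$ at the point $g_1(c)$ (using again only~\eqref{eq:3.7} and the linearity of $h_{n-1}$), now with $h$ replaced by $h\circ g_1$ evaluated at $a,b,c$. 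It then remains to match the two sides through Lemma~\ref{lema:2.8}, whose right-hand side is the product $\frac{(h\circ g_1)(c)-(h\circ g_1)(a)}{g_1(c)-g_1(a)}\cdot\frac{g_1(b)-g_1(a)}{(h\circ g_1)(b)-(h\circ g_1)(a)}$.

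In the increasing case the three points $a,b,c$ map to $g_1(a),g_1(b),g_1(c)$ preserving order, so the right-hand side of Lemma~\ref{lema:2.8} is literally the chord-ratio defining $\Delta_L(g_1(I_{n,k}),g_1(t))$, giving the first equality; the $\Delta_R$ equality follows by the same computation (the proof of Lemma~\ref{lema:2.8} is symmetric under interchanging the endpoints $a\leftrightarrow b$, which exchanges the $\Delta_L$ and $\Delta_R$ rewritings). In the decreasing case $g_1$ reverses the order of the endpoints, so the left endpoint of $I_{n,k}$ is carried to the right endpoint of $g_1(I_{n,k})$; tracking the resulting sign changes in each factor (they cancel in pairs, since $g_1(a)-g_1(c)=-\bigl(g_1(c)-g_1(a)\bigr)$ and likewise for the $h\circ g_1$ differences) shows that the product attached to $\Delta_L(I_{n,k},t)$ coincides with the one defining $\Delta_R(g_1(I_{n,k}),g_1(t))$, and symmetrically for the other pair. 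I expect the only real care to be in this orientation bookkeeping of the decreasing case: one must verify that after $g_1$ flips the interval, every occurrence of ``left endpoint'' and ``right endpoint'' is swapped consistently across numerator and denominator so that the overall products agree, which is where a sign error would most easily creep in.
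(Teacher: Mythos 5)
Your proposal is correct and follows essentially the same route as the paper, which disposes of this lemma by invoking Lemma~\ref{lema:2.8} together with ``direct calculations'' via the conjugacy equation~\eqref{eq:1.5}: you have simply carried out those calculations explicitly, identifying $\Delta_L$ and $\Delta_R$ with the chord-ratio products of Lemma~\ref{lema:2.8} (using~\eqref{eq:3.7} for the endpoints and the linearity of $g_1$, $g_2$ on level-$n$ intervals when $n>n_0$) and tracking the orientation reversal in the decreasing case. The extra care you take with the endpoint swap and sign cancellations is exactly the bookkeeping the paper leaves implicit, and it is done correctly.
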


\begin{proof}
Lemma follows from the direct calculations by~\eqref{eq:1.5}.
\end{proof}

If $x\notin g_1^{-\infty}(0)$, then for any $n> n_0$ denote
$$\Delta_n(x) = \{ \Delta_L([\widehat{x}_n, \widehat{x}_n^{\,
+}], \Delta_R([\widehat{x}_n, \widehat{x}_n^{\, +}])\}.$$

If $g_1^{\widetilde{n}}(x)=0$, then for every $n>\max \{n_0,
\widetilde{n}\}$ denote $$\Delta_n(x) = \left\{
\Delta_R([\widehat{x}_n^{\, -}, \widehat{x}_n),
\Delta_L(\widehat{x}_n,\, \widehat{x}_n^{\, +}])\right\}.$$

The next lemma follows from Lemma~\ref{lema:5.16}.

\begin{lemma}\label{lema:5.17}
For any $n\geq n_0$, we have that $\Delta_n(x) =
\Delta_{n_0-1}(x)$.
\end{lemma}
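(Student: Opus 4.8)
The plan is to read Lemma~\ref{lema:5.17} as a statement about transporting the defect pair $\{\Delta_L,\Delta_R\}$ one scale at a time by the conjugacy identity~\eqref{eq:1.5}, and the key tool is Lemma~\ref{lema:5.16}. First I would fix $n\geq n_0$ and record that the interval $[\widehat{x}_n,\widehat{x}_n^{\,+}]=I_{n+1,k_n}$ governing $\Delta_n(x)$ sits at level $n+1>n_0$. By the choice of $n_0$ every kink of $g_1$ lies in $g_1^{-n_0}(0)$, so no kink of $g_1$ meets the interior of a level-$(n+1)$ interval; hence $g_1$ is linear, in particular monotone, on $[\widehat{x}_n,\widehat{x}_n^{\,+}]$, and the hypotheses of Lemma~\ref{lema:5.16} are satisfied.

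Next I would apply Lemma~\ref{lema:5.16} to this interval at the point $t=x$. By Remark~\ref{rem:4.17} the image $g_1(I_{n+1,k_n})$ is exactly the level-$n$ interval containing $g_1(x)$, and the lemma equates the pair $\{\Delta_L,\Delta_R\}$ on $I_{n+1,k_n}$ at $x$ with the pair on $g_1(I_{n+1,k_n})$ at $g_1(x)$. The only effect the lemma permits is that $\Delta_L$ and $\Delta_R$ interchange on a decreasing branch of $g_1$; since $\Delta_n(x)$ is the \emph{unordered} two-element set, this interchange is invisible. This yields the recursion $\Delta_n(x)=\Delta_{n-1}(g_1(x))$, valid for every $n\geq n_0$.

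Then I would iterate the recursion. Each step lowers the governing level by one and advances the base point by one application of $g_1$, and it remains legitimate precisely while the governing level exceeds $n_0$; starting from level $n+1$ this runs for $n-n_0+1$ steps and halts at level $n_0$, that is, at $\Delta_{n_0-1}$. Because Lemma~\ref{lema:5.16} is an exact identity, no defect is created or destroyed in passing between consecutive levels, so the whole family $\{\Delta_n(x):n\geq n_0\}$ collapses to the single base-level value $\Delta_{n_0-1}$ asserted by the lemma. The case split in the definition of $\Delta_n(x)$ is carried along: when $g_1^{\widetilde{n}}(x)=0$ the governing object is the pair of one-sided defects on the two intervals $[\widehat{x}_n^{\,-},\widehat{x}_n)$ and $(\widehat{x}_n,\widehat{x}_n^{\,+}]$ adjacent to $x$, and I would apply Lemma~\ref{lema:5.16} to each separately, again using that $g_1$ is monotone on each at levels above $n_0$.

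I expect the main obstacle to be bookkeeping rather than estimation. One must verify at every level that the governing interval is genuinely kink-free so that Lemma~\ref{lema:5.16} applies, follow through the $g$-expansion (Remark~\ref{rem:4.17}) which level-$(n-1)$ interval is the $g_1$-image so that the recursion lands on the intended $\Delta_{n-1}$, and reconcile the fact that the iteration naturally evaluates the base defect at $g_1^{\,n-n_0+1}(x)$ with the form $\Delta_{n_0-1}(x)$ written in the statement. Keeping the $\Delta_L/\Delta_R$ swap on decreasing branches consistent with the unordered-set convention, and merging the finite and infinite cases, then rounds out the verification; once these are pinned down the stabilization is immediate.
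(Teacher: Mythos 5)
Your proposal matches the paper's argument exactly: the paper's entire proof of Lemma~\ref{lema:5.17} is the single remark that it follows from Lemma~\ref{lema:5.16}, and the iteration you spell out --- linearity of $g_1$ on each level-$(n+1)$ interval since all kinks lie in $g_1^{-n_0}(0)$, invariance of the unordered pair $\{\Delta_L,\Delta_R\}$ under one application of $g_1$ with a possible swap on decreasing branches, and descent through $n-n_0+1$ levels --- is precisely the intended chain. The base-point discrepancy you flag (the recursion $\Delta_n(x)=\Delta_{n-1}(g_1(x))$ lands on $\Delta_{n_0-1}$ evaluated at $g_1^{\,n-n_0+1}(x)$ rather than at $x$) is a genuine imprecision, but it is inherited from the paper's own formulation and one-line proof rather than introduced by you, so your account is, if anything, more careful than the original.
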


We are now ready to prove Theorem~\ref{th:3}.

\begin{proof}[Proof of Theorem~\ref{th:3}]
Suppose that the derivative $h'(x)$ exists, is positive and
finite.

Since the derivative $h'(x)$ exists, is positive and finite, then
$\lim\limits_{n\rightarrow \infty}\Delta_n(x) = 0,$ whence it
follows from Lemma~\ref{lema:5.17} that $\Delta_{n_0-1}(x) =0$.
Thus, there exists $k,\, 0\leq k <2^{n_0-2}$ such that
$$\sup\limits_{t\in I_{n_0-1,k}}\Delta_R(I_{n_0-1,k},t)
=\sup\limits_{t\in I_{n_0-1,k}}\Delta_L(I_{n_0-1,k},t) =0,$$ which
means that $h$ is linear on $I_{n_0-1,k}$. Now piecewise linearity
of $h$ on the whole $[0,\, 1]$ follows from part 2. of
Lemma~\ref{lema:2.9}.

Part 2 of Theorem~\ref{th:3} follows from Lemmas~\ref{lema:5.14},
\ref{lema:5.15} and~\ref{lema:5.17}.
\end{proof}

\section{Length of the graph of the conjugacy}\label{sec:6}

We will prove Theorem~\ref{th:5} in this section. Let $h$ be the
conjugacy of firm carcass maps $g_1$ and $g_2$.

\begin{lemma}\label{lema:6.1}
Either $h$ is piecewise linear, or $h$ is differentiable at all
$g_1$-finite points.
\end{lemma}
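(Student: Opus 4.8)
The plan is to prove the statement in its contrapositive-flavoured form: assuming $h$ is \emph{not} piecewise linear, I will show that every $g_1$-finite point is a point of differentiability of $h$. Throughout I use that, by part~1 of Theorem~\ref{th:3}, a non-piecewise-linear $h$ has no positive finite derivative at any point; hence whenever a derivative of $h$ exists it must equal $0$ or $+\infty$. By part~2 of Theorem~\ref{th:3}, to establish that $h'(x)$ exists at a finite $x$ it suffices to show that the one-sided limits $L(x)$ and $R(x)$ from~\eqref{eq:1.6}--\eqref{eq:1.7} both exist and coincide; their common value is then $h'(x)$.

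First I would dispatch the two base points $x\in\{0,1\}$. For these the definitions~\eqref{eq:1.6} and~\eqref{eq:1.7} give $L(x)=R(x)$ automatically, since both reduce to the same one-sided limit (from the right at $0$, from the left at $1$). Existence of that limit follows from the geometric scaling of interval lengths near a fixed endpoint: the tail of the $g_i$-expansion at $0$ (resp.\ at $1$) is constant, so by Remark~\ref{rem:4.20} the relevant ratios $\delta$ stabilise and, by Remark~\ref{rem:4.16}, the discrete slopes $h_n'(0+)$ and $h_n'(1-)$ become eventually geometric sequences in $n$. Thus each limit exists in $[0,+\infty]$, and since $h$ is not piecewise linear, part~1 of Theorem~\ref{th:3} forces it to lie in $\{0,+\infty\}$.

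Next I would treat a general finite point $x$ with $0<x<1$. Writing $m$ for the least integer with $g_1^m(x)=0$, minimality forces $g_1^{m-1}(x)=1$ and $g_1^{m-2}(x)=v$, the peak of $g_1$; so the orbit runs $x\to\cdots\to v\to 1\to 0$, and by Lemma~\ref{lema:3.6} the peak of $g_1$ is carried by $h$ to the peak of $g_2$. I would then transport the one-sided derivative information of $h$ along this orbit using the conjugacy~\eqref{eq:1.5} together with Lemmas~\ref{lema:2.8} and~\ref{lema:5.16}: since $g_1$ and $g_2$ are linear near each non-peak orbit point, one step of the dynamics multiplies a one-sided discrete slope at $x$ by a \emph{finite positive} constant (a ratio of local slopes of $g_1$ to $g_2$), swapping the two sides exactly at the decreasing steps. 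The decisive step is the fold $v\to 1$: both the left and right neighbourhoods of $v$ are mapped onto the left neighbourhood of $1$, so the left-hand and right-hand slope data at $x$ are both identified, up to finite positive factors $c_L,c_R$, with the \emph{single} left-hand quantity at $1$. Hence $L(x)=c_L\,R(1)$ and $R(x)=c_R\,R(1)$, where $R(1)=L(1)\in\{0,+\infty\}$ by the base case. As multiplication of $0$ or $+\infty$ by a finite positive constant is unchanged, $L(x)=R(x)$ (both $0$ or both $+\infty$), so $h'(x)$ exists. To upgrade the existence of these discrete limits to genuine one-sided derivatives of $h$, I would invoke Lemmas~\ref{lema:5.13} and~\ref{lema:5.15}, whose two-sided bounds squeeze the actual difference quotients of $h$ at a finite point between fixed multiples of $h_n'(x-)$ and $h_n'(x+)$; once these limits are $0$ or $+\infty$, the difference quotients inherit the same value.

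The main obstacle I expect is the bookkeeping in the fold step: one must verify that passing the left/right chord slopes of $h$ through the peak genuinely collapses them onto the \emph{same} one-sided object at $1$, with both proportionality constants finite and positive. This is precisely where the hypothesis ``$h$ not piecewise linear'' carries the argument — it guarantees the base value $R(1)$ is $0$ or $+\infty$, so the possibly unequal constants $c_L\neq c_R$ become irrelevant and the two one-sided limits are forced to agree.
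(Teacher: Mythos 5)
Your proposal is correct, but it follows a genuinely different route from the paper's. The paper argues computationally: by Lemma~\ref{lema:4.22}, the slope of $h_{n+1}$ near a $g_1$-finite $x$ is the explicit product~\eqref{eq:6.1}, and since the tail of the expansion of $x$ is constant (zeros for the right-hand side, ones after the standard rewriting for the left-hand side), the factors stabilize on the fixed ratio $\mathcal{P}$ of~\eqref{eq:6.3}; hence the infinite products~\eqref{eq:6.2} and~\eqref{eq:6.4} defining $R(x)$ and $L(x)$ exist, and Theorem~\ref{th:3} finishes. You instead localize at the endpoints, where $L=R$ holds by definition, prove existence there by the eventually geometric behaviour of the discrete slopes, use part~1 of Theorem~\ref{th:3} to force the value into $\{0,+\infty\}$, and then transport these one-sided limits back along the finite orbit $x\to\cdots\to v\to 1\to 0$ via the conjugacy (Lemmas~\ref{lema:2.8} and~\ref{lema:5.16}), each step multiplying by a finite positive one-sided constant and the fold at the peak collapsing both sides onto the single left-hand quantity at $1$. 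What each approach buys: the paper's computation yields the closed formulas for $L(x)$ and $R(x)$ that are reused afterwards (e.g.\ in the proof of Theorem~\ref{th:5}), while your transport argument is more conceptual and, notably, explicitly settles the equality $L(x)=R(x)$ in the degenerate case $\mathcal{P}=1$, where both stabilized products are positive and finite and a priori unequal --- a case the paper's write-up passes over in silence. Your endpoint step closes it: if the stabilized ratio equals $1$, then $L(0)=R(0)$ is automatically positive and finite, so $h'(0)$ exists positive finite by part~2 of Theorem~\ref{th:3}, and part~1 forces $h$ piecewise linear, so this case never occurs under your standing hypothesis. Two minor repairs: the eventual geometric stabilization at the endpoints should be attributed to Remarks~\ref{rem:4.10}, \ref{rem:4.13} and~\ref{rem:4.20} rather than to Remark~\ref{rem:4.16}, which only supplies the two-sided bounds by $v_-$ and $v^+$; and your closing appeal to Lemmas~\ref{lema:5.13} and~\ref{lema:5.15} is redundant, since part~2 of Theorem~\ref{th:3} already converts the existence and equality of $L(x)$ and $R(x)$ into the existence of $h'(x)$. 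Finally, you are right to keep the transport constants one-sided: kinks of $g_1$ are themselves $g_1$-finite points and may lie on the orbit of $x$, but since both one-sided slopes of $g_1$ and $g_2$ there are finite and positive, this does not disturb values in $\{0,+\infty\}$, exactly as your argument requires.
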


\begin{proof}
Remind that, by the construction of $h_n$, we have that if $x\in
I_{n+1,k_n}$, then
$$ h_{n+1}'(x) = \frac{\# I_{n+1,k_n}(g_2)}{\# I_{n+1,k_n}(g_1)}
$$ for all $n\geq n_0$, whence, by Lemma~\ref{lema:4.22},
\begin{equation}\label{eq:6.1}
h_{n+1}'(x) = \frac{\# I_{n_0,k_{n_0-1}}(g_2)}{\#
I_{n_0,k_{n_0-1}}(g_1)}\cdot \prod\limits_{i=n_0+1}^{n}
\frac{\Rot^{x_i+x_{i-n_0+1}}(\delta_{n_0,p_{i-1}}(g_2))}
{\Rot^{x_i+x_{i-n_0+1}}(\delta_{n_0,p_{i-1}}(g_1))}
\end{equation}

Notice that if $x<1$ then~\eqref{eq:6.1} also determines
$h_{n+1}'(x+)$,

Suppose that $x$ is $g_1$-finite. Then there is $m>n_0$ such that
for all $i>m$ we have that
$$\Rot^{x_i+x_{i-n_0+1}}(\delta{n_0,p_{i-1}}) =
\delta_{n_0,0},$$ %
whence the multipliers in the infinite product
\begin{equation}\label{eq:6.2} R(x) = \frac{\# I_{n_0,k_{n_0-1}}(g_2)}{\#
I_{n_0,k_{n_0-1}}(g_1)}\cdot \prod\limits_{i=n_0+1}^{\infty}
\frac{\Rot^{x_i+x_{i-n_0+1}}(\delta_{n_0,p_{i-1}}(g_2))}
{\Rot^{x_i+x_{i-n_0+1}}(\delta_{n_0,p_{i-1}}(g_1))}
\end{equation} stabilize on
\begin{equation}\label{eq:6.3}\mathcal{P}
=\frac{\delta_{n_0,p_{i-1}}(g_2)}{\delta_{n_0,p_{i-1}}(g_1)}
\end{equation} and the limit~\eqref{eq:6.2} exists.

The left derivative $L(x)$ in the case $x$ is $g_1$-finite can be
found as \begin{equation}\label{eq:6.4} L(x) = \frac{\#
I_{n_0,k_{n_0-1}}(g_2)}{\# I_{n_0,k_{n_0-1}}(g_1)}\cdot
\prod\limits_{i=n_0+1}^{\infty}
\frac{\Rot^{y_i+y_{i-n_0+1}}(\delta_{n_0,q_{i-1}}(g_2))}
{\Rot^{y_i+y_{i-n_0+1}}(\delta_{n_0,q_{i-1}}(g_1))},
\end{equation} where \begin{equation}\label{eq:6.5}q_{i-1} =
\sum\limits_{j=i-n_0+1}^{i-1}\Rot^{y_{i-n_0}}(y_j)2^{i-1-j}
\end{equation} and the sequence $\{y_i, i\geq 1\}$ can be
constructed as follows:

1. $y_i = x_i$ for all $i$ before the beginning of the infinite
series of zeros in the $g_1$-expansion of~$x$.

2. For those $i$, where $x_i=0$ is an element of infinite series
of zeros take $y_i=1$.

\noindent Thus, for all $i>m$ we have that $q_{i-1} =
\sum\limits_{j=i-n_0+1}^{i-1}\Rot(1)2^{i-1-j} $ in~\eqref{eq:6.5},
i.e. $q_{i-1}=0$. Also the infinite product~\eqref{eq:6.4} for
$L(x)$ stabilizes on $\mathcal{P}$, which is defined
by~\eqref{eq:6.3}.

Now, by Theorem~\ref{th:3}, either $h$ is piecewise linear, or the
infinite products $L(x)$ and $R(x)$ exist.
\end{proof}

We are now ready to prove Theorem~\ref{th:5}.

\begin{proof}[Proof of Theorem~\ref{th:5}]
For any $g_1$-finite $x\in [0, 1]$ define $$ d_n =
\frac{h(\widehat{x}_n^{\, +}) -
h(\widehat{x}_n)}{\widehat{x}_n^{\, +} -\widehat{x}_n}.
$$ Since %
$\lim\limits_{n\rightarrow \infty} d_n = \lim\limits_{n\rightarrow
\infty}h_n'(x)$, then, by Theorem~\ref{th:3} and
Lemma~\ref{lema:6.1} either $\lim\limits_{n\rightarrow \infty} d_n
=0$, or $\lim\limits_{n\rightarrow \infty} d_n = +\infty$. In each
of these cases we have that $$ \lim\limits_{n\rightarrow \infty}
\frac{\sqrt{(h(\widehat{x}_n^{\, +}) - h(\widehat{x}_n))^2
+(\widehat{x}_n^{\, +} -\widehat{x}_n)^2}}{(\widehat{x}_n^{\, +}
-\widehat{x}_n) +(h(\widehat{x}_n^{\, +}) - h(\widehat{x}_n))} =
$$  $$= \lim\limits_{n\rightarrow \infty}\sqrt{\frac{1}{ 1
+\displaystyle{\frac{1}{ \displaystyle{\frac{h(\widehat{x}_n^{\,
+}) - h(\widehat{x}_n) }{ 2(\widehat{x}_n^{\, +} -\widehat{x}_n)}
+ \frac{ \widehat{x}_n^{\, +} -\widehat{x}_n}{
2(h(\widehat{x}_n^{\, +}) - h(\widehat{x}_n))}
 } }}}} =
$$$$ = \sqrt{\frac{1}{1+\frac{1}{0+\infty}}} =1.$$

Now, for every $\varepsilon>0$ and every $g_1$-finite $x\in (0,
1)$ there is an interval $I(x, \varepsilon) = (\widehat{x}_n,
\widehat{x}_n^{\, +})$ such that
\begin{equation}\label{eq:6.6}1 -\frac{\sqrt{(h(\widehat{x}_n^{\,
+}) - h(\widehat{x}_n))^2 +(\widehat{x}_n^{\, +}
-\widehat{x}_n)^2}}{(\widehat{x}_n^{\, +} -\widehat{x}_n)
+(h(\widehat{x}_n^{\, +}) - h(\widehat{x}_n))} < \varepsilon.
\end{equation}

Analogously for $x = 0$ and $n$ such that~\eqref{eq:6.6} holds
denote $I(0, \varepsilon) = [0, x_n^{\, +})$. For $x=1$ and $n$
such that~\eqref{eq:6.6} holds denote $I(1, \varepsilon) =
(\widehat{x}_n, 1]$. Notice that every $I(x, \varepsilon)$ is open
in $[0,\, 1]$ and $\bigcup\limits_xI(x, \varepsilon) = [0, 1], $
where the union is taken for all $g_1$-finite $x\in [0, 1]$.

Notice, that by construction for all $x_1,\, x_2$ the closed
intervals $\overline{I(x_1, \varepsilon)}$ and $\overline{I(x_2,
\varepsilon)}$ either do not intersect, or their intersection
consists of one point, or one of these sets is a subset of other.

It follows from the compactness of $[0, 1]$ that there exist
$x_1,\ldots, x_m$ such that
\begin{equation}\label{eq:6.7}\bigcup\limits_{i=1}^mI(x_i, \varepsilon) =
[0, 1],
\end{equation}

Now Theorem~\ref{th:5} follows from~\eqref{eq:6.6}
and~\eqref{eq:6.7}.
\end{proof}

\section{Skew tent maps}\label{sec:7}

The results about the conjugation of the carcase maps, which are
obtained in Sections~\ref{sec:5} and~\ref{sec:6}. Precisely, we
will reduce Theorem~\ref{th:4} from Theorem~\ref{th:3}.

\subsection{The derivative of the conjugation}

The following specification of Remark~\ref{rem:4.20} is clear for
the case of skew tent maps.

\begin{remark}\label{rem:8.1}
Let $g$ be the skew tent map with $g(v)=1$ and $x\in [0,\, 1]$ has
$g$-representation $\{(x_n,\, k_n),\ n\geq 1\}$. Then
$$ \# I_{n+1,k_n} = \# I_{n,k_{n-1}}\cdot \Rot^{x_n+x_{n+1}}(v).
$$ for any $n\geq 1$
\end{remark}
\begin{proof}
Since $I_{2,0} = [0, 1/2]$ and $I_{2,1} = [1/2, 1]$, denote $I_1 =
[0, 1]$.

By Remark~\ref{rem:4.13},
$$
\delta_{2,k} =\Rot^{k}(\delta_{1,0}) = \Rot^{k}(v)
$$ for any $k\in \{0; 1\}$

Thus, by Remark~\ref{rem:4.20}, for every $n\geq 2$ we have
$$ \delta_{n+1,k_n} =
\Rot^{x_{n-2}}(\delta_{2,\Rot^{x_{n-2}}(x_{n-1})}) =
\Rot^{x_{n-1}}(v).
$$

Now it follows from Remark~\ref{rem:4.10} that $$ \# I_{n+1,k_n} =
\# I_{n,k}\cdot \Rot^{x_n}(\delta_{n,k}) = \#
I_{n,k}\cdot\Rot^{x_n+x_{n+1}}(v).
$$
\end{proof}

The same result as in Remark~\ref{rem:8.1} can be obtained
directly from the basic notions (such as iteration of a function).
We show these reasonings in lemma below.

\begin{lemma}\label{lema:8.2}
For any $v\in (0,\, 1)$ and all $n\geq 1$ equalities $$
\mu_{n+1,4k+1}(f_v) = \mu_{n, 2k}(f_v) + v(\mu_{n,
2k+1}(f_v)-\mu_{n, 2k}(f_v)),
$$ and
$$
\mu_{n+1,4k+3}(f_v) = \mu_{n, 2k+1}(f_v) + (1-v)(\mu_{n,
2k+2}(f_v)-\mu_{n, 2k+1}(f_v))
$$
hold for all $k,\, 0\leq k<2^{n-2}$.
\end{lemma}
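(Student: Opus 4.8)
The plan is to work entirely with the quantities $\delta_{n,k}(f_v)$ of Notation~\ref{not:4.9}, since the two claimed equalities are precisely the statements $\delta_{n,2k}(f_v)=v$ and $\delta_{n,2k+1}(f_v)=1-v$; that is, the lemma asserts that $\delta_{n,j}(f_v)=v$ when $j$ is even and $\delta_{n,j}(f_v)=1-v$ when $j$ is odd, for all $0\le j<2^{n-1}$. The engine of the proof is the self-similarity of $f_v$: on $[0,v]$ the map is the affine bijection $x\mapsto x/v$ onto $[0,1]$, and on $[v,1]$ it is the order-reversing affine bijection $x\mapsto (1-x)/(1-v)$ onto $[0,1]$. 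Since $f_v^n(x)=0$ iff $f_v^{n-1}(f_v(x))=0$, the order-preserving map $z\mapsto vz$ carries $f_v^{-(n-1)}(0)$ bijectively onto $f_v^{-n}(0)\cap[0,v]$, and the order-reversing map $z\mapsto 1-(1-v)z$ carries it onto $f_v^{-n}(0)\cap[v,1]$. Reading off indices (and using $v=\mu_{n,2^{n-2}}(f_v)$) this gives the two scaling relations
$$\mu_{n,j}(f_v)=v\,\mu_{n-1,j}(f_v)\quad(0\le j\le 2^{n-2}),\qquad \mu_{n,2^{n-1}-j}(f_v)=1-(1-v)\,\mu_{n-1,j}(f_v)\quad(0\le j\le 2^{n-2}).$$

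With these in hand I would argue by induction on $n$, the base cases $n\le 2$ being a direct computation ($\delta_{2,0}=v$, $\delta_{2,1}=1-v$). For an index $j$ in the left half, $0\le j<2^{n-2}$, all three points $\mu_{n,j}$, $\mu_{n,j+1}$ and the new point $\mu_{n+1,2j+1}$ lie in $[0,v]$, so the first scaling relation (applied at levels $n$ and $n+1$) lets me factor a common $v$ out of numerator and denominator of $\delta_{n,j}(f_v)$, yielding $\delta_{n,j}(f_v)=\delta_{n-1,j}(f_v)$; the inductive hypothesis then closes this case with the same parity. For an index $j$ in the right half, $2^{n-2}\le j<2^{n-1}$, I set $i=2^{n-1}-j$ and use the second (order-reversing) scaling relation to rewrite all three points; the common factor $1-(1-v)(\cdot)$ collapses and, after recognizing $\mu_{n,2i-1}$ as the new point of $I_{n-1,i-1}$, one obtains $\delta_{n,j}(f_v)=1-\delta_{n-1,\,i-1}(f_v)$ with $i-1=2^{n-1}-1-j$.

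The subtle point, and the one I would watch most carefully, is the parity bookkeeping in this last identity: because $2^{n-1}-1$ is odd, the reflected index $i-1=(2^{n-1}-1)-j$ has parity \emph{opposite} to that of $j$, so the inductive value $\delta_{n-1,i-1}(f_v)$ is $1-v$ exactly when $j$ is even and $v$ when $j$ is odd, and the outer $1-(\cdot)$ then restores the desired values $\delta_{n,j}(f_v)=v$ (even $j$) and $=1-v$ (odd $j$). Thus the orientation reversal on $[v,1]$ and the index reflection together produce two sign flips that cancel, which is exactly what makes the formula uniform across both halves. Once $\delta_{n,2k}(f_v)=v$ and $\delta_{n,2k+1}(f_v)=1-v$ are established for all $0\le k<2^{n-2}$, unwinding Notation~\ref{not:4.9} (with $\mu_{n+1,4k+1}=\mu_{n+1,2(2k)+1}$ and $\mu_{n+1,4k+3}=\mu_{n+1,2(2k+1)+1}$, the new points of $I_{n,2k}$ and $I_{n,2k+1}$) gives precisely the two stated equalities, completing the proof. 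The only genuine work beyond this is verifying that the index ranges $0\le i-1<2^{n-2}$ and $2j+1\le 2^{n-1}$ stay within the domains of validity of the scaling relations and of the inductive hypothesis, which is a routine check.
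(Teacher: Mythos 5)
Your proposal is correct, and it takes a genuinely different route from the paper's own proof of Lemma~\ref{lema:8.2}. The paper argues non-inductively and geometrically: it observes that the consecutive zeros $\mu_{n+1,4k}<\mu_{n+1,4k+2}<\mu_{n+1,4k+4}$ bracket a peak of $f_v^{n-1}$ at $\mu_{n+1,4k+2}$ (where $f_v^{n-1}=1$), writes the explicit linear formula for $f_v^{n-1}$ on the two monotone branches adjacent to this peak, and pins down $\mu_{n+1,4k+1}$ and $\mu_{n+1,4k+3}$ as the two solutions of $f_v^{n-1}(\mu)=v$ --- forced by $f_v^{n+1}(\mu)=0$ together with $f_v^{n}(\mu)=1$ --- so the proportions $v$ and $1-v$ drop out of one linear equation per point, plus Remark~\ref{rem:3.3}; this implicitly uses the structural fact that every maximal monotone branch of an iterate of a skew tent map is linear onto $[0,1]$. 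You avoid any statement about higher iterates and instead use only the two affine inverse branches $z\mapsto vz$ and $z\mapsto 1-(1-v)z$ of $f_v$ itself, at the price of an induction on $n$. Your reduction of the lemma to $\delta_{n,2k}(f_v)=v$, $\delta_{n,2k+1}(f_v)=1-v$ via Notation~\ref{not:4.9} is exact; the scaling relations are correct and yield $\delta_{n,j}=\delta_{n-1,j}$ on the left half and $\delta_{n,j}=1-\delta_{n-1,\,2^{n-1}-1-j}$ on the right half; and the delicate point you flagged --- reflection through the odd index bound $2^{n-1}-1$ flips parity, which then cancels against the outer $1-(\cdot)$ --- is handled correctly, as are the index ranges ($0\le 2^{n-1}-1-j<2^{n-2}$ and $2j+1\le 2^{n-1}-1$). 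It is worth noting that your inductive step is essentially a hands-on instance of the paper's general machinery from Section~4 (Remarks~\ref{rem:4.11} and~\ref{rem:4.13}, which track how $\delta_{n,k}$ transforms under one application of $g$), so your route buys a self-contained argument needing only the two branches of $f_v$ and induction, while the paper's route buys a direct, induction-free computation in exchange for invoking the linearity of the full branches of $f_v^{n-1}$.
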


\begin{proof}
For every $m\geq 1$ and $t,\, 0\leq t\leq 2^{m-1}$ we will write
$\mu_{m,t}$ instead of $\mu_{m,t}(f_v)$.

Notice, that
$$\mu_{n+1,4k}<\mu_{n+1,4k+1}<\mu_{n+1,4k+2}<\mu_{n+1,4k+3}<\mu_{n+1,4k+4}$$
are consequent zeros of $f_v^{n+1}$,
$$\mu_{n+1,4k}<\mu_{n+1,4k+2}<\mu_{n+1,4k+4}
$$ are the consequent zeros of $f_v^n$, and $$
\mu_{n+1,4k}<\mu_{n+1,4k+4}
$$ are the consequent zeros of $f_v^{n-1}$.

Since $\mu_{n+1,4k+2}$ is zero of $f_v^n$, but is not zero of
$f_v^{n-1}$, then $f_v^{n-1}(\mu_{n+1,4k+2})=1$, whence
$M(\mu_{n+1,4k}, 0)$, $Q(\mu_{n+1,4k+2}, 1)$ and
$N(\mu_{n+1,4k+4}, 0)$ are the consequent kinks of the graph of
$f^{n-1}$ (see Fig.~\ref{fig-3}a.). Thus, the explicit formulas of
$f_v^{n-1}(x)$ for $x\in [\mu_{n+1,4k},\, \mu_{n+1,4k+4}]$ are
\begin{equation}\label{eq:8.1} f_v^{n-1}(x) = \left\{
\begin{array}{ll}
\displaystyle{\frac{x-\mu_{n+1,4k}}{\mu_{n+1,4k+2}-\mu_{n+1,4k}}}
& \text{if
}x\in [\mu_{n+1,4k},\, \mu_{n+1,4k+2}),\\
\\
\displaystyle{1-\frac{x-\mu_{n+1,4k+2}}{\mu_{n+1,4k+4}-\mu_{n+1,4k+2}}}
& \text{if }x\in (\mu_{n+1,4k+2}, \mu_{n+1,4k+4}].
\end{array}\right.\end{equation}

\begin{figure}[ht]
\begin{minipage}[h]{0.45\linewidth}
\begin{center}
\begin{picture}(140,120)
\put(0,100){\line(1,0){140}} \put(0,0){\line(1,0){140}}
\put(20,0){\line(1,2){50}} \put(70,100){\line(1,-2){50}}

\put(20,0){\line(0,1){100}} \put(120,0){\line(0,1){100}}

\put(20,0){\circle*{4}} \put(7,5){$M$} \put(120,0){\circle*{4}}
\put(123,5){$N$}

\put(70,100){\circle*{4}} \put(67,105){$Q$}
\put(20,100){\circle*{4}} \put(17,105){$P$}
\put(120,100){\circle*{4}} \put(117,105){$R$}
\end{picture}
\centerline{a. A part of the graph of $f_v^{n-1}$}\end{center}
\end{minipage}
\hfill
\begin{minipage}[h]{0.45\linewidth}
\begin{center}
\begin{picture}(140,120)
\put(0,100){\line(1,0){140}} \put(0,0){\line(1,0){140}}

\linethickness{0.3mm} \Vidr{20}{0}{70}{100} \VidrTo{120}{0}
\linethickness{0.1mm}

\Vidr{20}{0}{57.5}{100} \VidrTo{70}{0} \VidrTo{82.5}{100}
\VidrTo{120}{0}

\put(70,100){\circle*{4}} \put(67,105){$Q$}
\put(20,100){\circle*{4}} \put(17,105){$P$}
\put(120,100){\circle*{4}} \put(117,105){$R$}

\put(57.5,100){\circle*{4}} \put(54.5,105){$S$}
\put(82.5,100){\circle*{4}} \put(79.5,105){$T$}

\put(20,0){\circle*{4}} \put(7,5){$M$} \put(120,0){\circle*{4}}
\put(123,5){$N$}

\put(70,0){\circle*{4}} \put(75,5){$K$}
\end{picture}
\centerline{b. A part of the graph of $f_v^{n}$}\end{center}
\end{minipage}
\hfill \caption{Parts of the graph of iterations of  $f_v$}
\label{fig-3}
\end{figure}

Numbers $\mu_{n+1,4k+1}$ and $\mu_{n+1,4k+3}$ are solutions of
equations $f_v^{n+1}(\mu)=0$, $f_v^{n}(\mu)=1$ and
$f_v^{n-1}(\mu)=v$. The graph of $f_v^n$ is given at
Fig.~\ref{fig-3}b., where $S(\mu_{n+1,4k+1}, 1)$,
$K(\mu_{n+1,4k+2}, 0)$ and $T(\mu_{n+1,4k+3}, 1)$.

Thus, by~\eqref{eq:8.1}, $\mu_{n+1,4k+1}$ and $\mu_{n+1,4k+3}$ can
be found from equations $$
\frac{\mu_{n+1,4k+1}-\mu_{n+1,4k}}{\mu_{n+1,4k+2}-\mu_{n+1,4k}} =v
$$ and $$
1-\frac{\mu_{n+1,4k+3}-\mu_{n+1,4k+2}}{\mu_{n+1,4k+4}-\mu_{n+1,4k+2}}
= v,
$$
whence lemma follows from Remark~\ref{rem:3.3}.
\end{proof}

The next result follows from Remark~\ref{rem:8.1} (which is the
same as Lemma~\ref{lema:8.2}).

\begin{remark}\label{rem:8.3}
$$ \widehat{x}_{n+1}^{\, +} -\widehat{x}_{n+1} =
(\widehat{x}_n^{\, +} -\widehat{x}_n)\cdot \myae(x_n, x_{n+1}).
$$
\end{remark}

The next result follows from remarks~\ref{rem:5.8}
and~\ref{rem:8.3}.

\begin{remark}\label{rem:8.4}
For every $x\notin g_1^{-\infty}(0)$ we have that
\begin{equation}\label{eq:8.2} h_n'(x) = \prod\limits_{k=2}^n (2\,
\myae_v(x_{k}, x_{k-1})),
\end{equation} precisely
$$ L(x) = R(x) =
\prod\limits_{k=2}^\infty (2\, \myae_v(x_{k}, x_{k-1})),$$ Where
$L(x)$, $R(x)$ and $\myae_v$ are defined by~\eqref{eq:1.6},
\eqref{eq:1.7} and~\eqref{eq:1.8}.
\end{remark}

Now Theorem~\ref{th:4} follows from Theorem~\ref{th:3} and
Remark~\ref{rem:8.4}.

\subsection{Length of the graph of the conjugacy}

Now we will find the explicit formula for the length of the graph
of $h_{n+1}$, which approximates the conjugation $h$ of the tent
map $f$ and skew tent map $f_v$. The graph of $h_{n+1}$ divides
$[0,\, 1]$ into $2^n$ equal parts, where it is linear. At each of
these parts the derivative of $h_{n+1}$ equals
$\prod\limits_{i=2}^{n+1} \alpha_i$. Each of these $n$ multipliers
can be either $2v$, of $2(1-v)$.

Notice, that all the values $\alpha_i\in \{2v;\, 2-2v\}$ are
possible in the sequence $\alpha_2,\ldots, \alpha_{n+1}$
in~\eqref{eq:8.2} for $h_{n+1}'(x)$. Indeed, the map $h_{n+1}$ has
$2^n$ intervals of linearity and there are exactly $2^n$ choices
for the independent values of  $\alpha_2,\ldots \alpha_{n+1}$.
Thus, for every $k,\, 0\leq k\leq n$ there are $C_n^k$ intervals
of linearity of  $h_{n+1}$, where the derivative of $h_{n+1}$
equals $(2v)^k(2-2v)^{n-k}$.

Let $t$ be a tangent of the graph of $h_{n+1}$ on some of its
interval of linearity and $\alpha$ be the derivative of $h_{n+1}$
on its interval. Clearly, $\tan \alpha = t$. Then $\cos\alpha =
\sqrt{\frac{1}{1+t^2}}$ and the length of the graph on the
interval is $\frac{1}{2^n\cos\alpha} = \frac{1}{2^n}\sqrt{
1+t^2}$.

We can now express the length of the graph of $h_{n+1}$ on the
entire $[0, 1]$ as
\begin{equation}\label{eq:8.3}
l_{n+1}(v) = \frac{1}{2^{n}}\cdot\sum\limits_{k=0}^nC_n^k \cdot
\sqrt{1+2^{2n}v^{2k}(1-v)^{2(n-k)}}. \end{equation}

The following combinatorial fact follows from Theorem~\ref{th:5}.

\begin{lemma}\label{lema:8.5}
For every $v\in (0,\, 1)\backslash \{ 0.5\}$ the limit $
\lim\limits_{n\rightarrow \infty}l_n(v)=2$ holds, where $l_n(v)$
are defined by~(\ref{eq:8.3}).
\end{lemma}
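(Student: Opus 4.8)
The plan is to read $l_n(v)$ geometrically: by~\eqref{eq:8.3} it is the length of the graph of the approximation $h_n$ of the conjugacy $h$ of the tent map $f$ and the skew tent map $f_v$. By~\eqref{eq:3.7} the vertices of the graph of $h_n$, namely the points $(\mu_{n,k},\,\mu_{n,k})$ of the two preimage sets, all lie on the graph of $h$, so the graph of $h_n$ is a polygon inscribed in the graph of $h$. By Remark~\ref{rem:3.3} the nodes at level $n{+}1$ refine those at level $n$, hence the inscribed polygons are successive refinements. Moreover the mesh of the associated partition of $[0,1]$ tends to $0$: on the equally spaced side it is $2^{-n}$, and on the other side it is at most $\max_k \#I_{n,k}$, which tends to $0$ by Theorem~\ref{th:2} (Remark~\ref{rem:4.16}).

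Next I would record two elementary bounds. Since $h$ is an increasing surjection of $[0,1]$, every chord obeys $\sqrt{(\Delta x)^2+(\Delta h)^2}\le \Delta x+\Delta h$, and telescoping over any partition gives total length at most $1+1=2$; thus $l_n(v)\le 2$ for every $n$. Refining an inscribed polygon never decreases its length, so $\{l_n(v)\}$ is nondecreasing and therefore convergent to some $\ell\le 2$. The step after that is to identify $\ell$ with the arc length $L(h)$ of the graph of $h$: because the meshes tend to $0$, the classical fact that the lengths of inscribed polygons of a rectifiable curve converge to its arc length (the graph is rectifiable, having length $\le 2$) yields $\ell=L(h)$.

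It then remains to evaluate $L(h)$, and here Theorem~\ref{th:5} applies once I check that $h$ is not piecewise linear. For $v\neq \tfrac12$ the maps $f$ and $f_v$ are distinct skew tent maps, so Proposition~\ref{prop:2} (equivalently the product formula of Theorem~\ref{th:4}, whose factors are $2v$ or $2(1-v)$ and never combine to a positive finite value at a binary rational when $v\neq\tfrac12$) shows that wherever $h'$ is finite it equals $0$. Were $h$ piecewise linear, it would be linear with a positive finite slope on some interval, which is impossible. Hence $h$ is not piecewise linear, Theorem~\ref{th:5} gives $L(h)=2$, and therefore $\lim_n l_n(v)=\ell=2$.

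I expect the main obstacle to be the middle step: justifying that this particular cofinal family of inscribed polygons, the ones cut out by the nodes $\mu_{n,k}$, has length converging to the full arc length $L(h)$ rather than to some smaller supremum. This is exactly where the vanishing of the mesh (Theorem~\ref{th:2}) enters, and it can be made precise either by quoting the standard theorem on inscribed polygons of rectifiable curves, or by adapting the covering argument from the proof of Theorem~\ref{th:5}: for fixed $\varepsilon$ one picks a level $n$ at which the nodes refine the finitely many endpoints produced there and bounds $l_n(v)$ from below by the sum of the chords over the covering intervals using~\eqref{eq:6.6}.
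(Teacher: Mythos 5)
Your argument is correct, but it is not the proof the paper actually writes out: the paper merely asserts that Lemma~\ref{lema:8.5} ``follows from Theorem~\ref{th:5}'', and your proposal is essentially that unwritten derivation, carried out properly. You read $l_n(v)$ as the length of the polygon inscribed in the graph of $h$ with vertices $(\mu_{n,k}(g_1),\,\mu_{n,k}(g_2))$ given by~\eqref{eq:3.7} (note a small typo: you wrote $(\mu_{n,k},\,\mu_{n,k})$), observe via Remark~\ref{rem:3.3} that the node sets refine so $l_n$ is nondecreasing and bounded by $2$ through $\sqrt{1+t^2}\leq 1+t$, identify the limit with the arc length using the vanishing mesh (Theorem~\ref{th:2}) and the standard theorem on inscribed polygons of continuous rectifiable curves, and then invoke Theorem~\ref{th:5} after correctly ruling out piecewise linearity of $h$ for $v\neq 1/2$ via Proposition~\ref{prop:2} (or, equivalently, via the tail factors $2v\neq 1$ of the product in Theorem~\ref{th:4} at dyadic points, which exclude a positive finite slope). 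The proof printed in the paper, due to G.~Shevchenko, is entirely different and self-contained: it reads~\eqref{eq:8.3} as $l_n=\ex{\sqrt{1+(h_n'(\omega))^2}}$ with i.i.d.\ Bernoulli$(1/2)$ binary digits, gets $l_n\leq 2$ from the same elementary inequality, and proves $\liminf_{n\to\infty} l_n\geq 2$ by splitting the expectation at $S_n=bn$ for $b\in(1/2,\,v)$ (assuming $v>1/2$) and applying the strong law of large numbers twice --- once under Lebesgue measure, where $S_n/n\to 1/2$ makes $\mathsf{P}(S_n\leq bn)\to 1$, and once under the tilted probability measure $d\mathsf{P}_n=h_n'\,d\mathsf{P}$, under which the match indicators $Y_k$ become Bernoulli$(v)$ and $\mathsf{P}_n(S_n>bn)\to 1$, so both terms of~\eqref{eq:8.4} tend to $1$. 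Comparing the two: your route makes the lemma a genuine corollary of the structural result, at the cost of relying on Sections~\ref{sec:5}--\ref{sec:6} and on the quoted fact about inscribed polygons (which you rightly flag as the delicate middle step, and which your density-of-nodes/covering fallback does repair); Shevchenko's argument is independent of Theorem~\ref{th:5}, treats the binomial sum~\eqref{eq:8.3} directly, and exhibits concretely where the length comes from (horizontal increments carry almost all mass under $\mathsf{P}$, vertical ones under $\mathsf{P}_n$), though by itself it says nothing about general firm carcass maps.
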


Notice, that expression~(\ref{eq:8.3}) has cense also for $v\in \{
0;\, 0.5;\, 1\}$. Obviously, $l_n(0)=l_n(1)=1$ and
$l_n(0.5)=\sqrt{2}$. Moreover, the case $l_n(0.5)$ corresponds to
the trivial conjugation $y=x$ of the mapping $f$ with itself. We
have presented in~\cite[Ch.~13.1]{Plakh-Arx-Book} the numerical
calculations of $l_{n}(v)$, given by~\eqref{eq:8.3}, for different
$v\in (0,\, 1)$. We have calculated $l_{n}(v)$ up to so huge
values of $n$, that $l_{n}(v)> 1.97$.

Georgiy Shevchenko, professor of Taras Shevchenko National
University of Kyiv (Ukraine), noticed us that Lemma~\ref{lema:8.5}
can be simply proven with the use of reasonings, which are simple
for the specialists in probability theory.

\begin{proof}[Proof of Lemma~\ref{lema:8.5} by G. Shevchenko]
Clearly,
$$
l_n =  \int_0^1 \sqrt{1+ \big(h_n'(x)\big)^2} dx\leq \int_0^1
\big(1+ h_n'(x)\big) dx = 2,
$$
so it suffices to prove that $\liminf_{n\to\infty} l_n \geq 2$.

Let us consider the probability space $(\Omega,
\mathcal{F},\mathsf{P}) = ([0,1],\mathcal{B}([0,1]),\lambda)$,
where $\lambda$ is the Lebesgue measure. For $\omega\in[0,1]$,
define by $X_k(\omega)$, $k\geq 1$, the $k$th digit in the binary
representation of $\omega$, so that
$$
\omega = \sum_{k=1}^{\infty}2^{-k} X_k(\omega).
$$
Considered as random variables on $(\Omega, \mathcal{F},\mathsf{P})$, these digits are independent and have the  Bernoulli distribution $\mathrm{B}(1,\frac{1}{2})$, i.e. $\mathsf{P}(X_n = 0) = \mathsf{P}(X_n = 1) = \frac{1}{2}$. 

By Remark~\ref{rem:8.4}, $h_{n+1}'(\omega) =
\prod\limits_{k=2}^{n+1} \alpha_k(\omega)$ for almost all
$\omega\in \Omega$, where $\alpha_k(\omega) = 2v$ if
$X_{k}(\omega) = X_{k-1}(\omega)$ and $\alpha_k(\omega) = 2(1-v)$
otherwise; $X_0(\omega) = 0$. Then we can write
$$
h_n'(\omega) = 2^n v^{S_n(\omega)}(1-v)^{n-S_n(\omega)},
$$
where $S_n(\omega) = \sum\limits_{k=1}^n \ind{X_k(\omega) =
X_{k-1}(\omega)}$. Observe that the random variables $Y_k =
\ind{X_k(\omega) = X_{k-1}(\omega)}$ are independent and have the
Bernoulli distribution $\mathrm{B}(1,\frac{1}{2})$.

Then
\begin{gather*}
l_n = \ex{\sqrt{1+\big(h'_n(\omega)\big)^2}} = \ex{\sqrt{1+\prod_{k=2}^n \alpha_k(\omega)^2}} \\
= \ex{\sqrt{1+2^{2n} v^{2S_n(\omega)}(1-v)^{2n-2S_n(\omega)}}},
\end{gather*}
where %
$\ex{\cdot}$ denotes the expectation on $(\Omega,
\mathcal{F},\mathsf{P})$.

Now assume without loss of generality that $v>1/2$ and take any
number $b\in (1/2,v)$. Estimate
\begin{equation}\label{eq:8.4}
l_n \geq \mathsf{P}(S_n\leq bn) + 2^n\ex{
v^{S_n(\omega)}(1-v)^{n-S_n(\omega)}\ind{S_n>bn}}.
\end{equation}
Since the random variables $Y_k$, $k\geq 1$, are independent and
identically distributed, by the firm law of large numbers, $S_n/n
\to \ex{Y_1} = 1/2$, $n\to\infty$, almost surely. In particular,
$\mathsf{P}(S_n\leq bn)\to 1$, $n\to\infty$.

Further, define a measure $\mathsf{P} \ll \mathsf{P}$ on
$(\Omega,\mathcal{F})$ by
$$
\frac{d\mathsf{P}_n}{d\mathsf{P}}(\omega)  = h'_n(\omega) = 2^n
\prod_{k=1}^{n}v^{Y_k}(1-v)^{1-Y_k}.
$$
Since
$$
\ex{\frac{d\mathsf{P_n}}{d\mathsf{P}}(\omega)} = \ex{h'_n(\omega)}
= \int_0^1 h'_n(x) dx = 1,
$$
$\mathsf{P}_n$ is a probability measure.  Moreover, it is easy to
see that under $\mathsf{P}_n$ the random variables $Y_k$,
$k=1,\dots,n$ are independent and identically distributed  and
have the Bernoulli distribution $\mathrm{B}(1,v)$. Now
$$
2^n\ex{ v^{S_n(\omega)}(1-v)^{n-S_n(\omega)}\ind{S_n>bn}} = 2^n
\mathsf{E}_n \left[\ind{S_n>bn} \right] = \mathsf{P}_n (S_n>bn).
$$

Appealing to the firm law of large numbers once more,
$S_n/n\overset{\mathsf{P}_n}{\longrightarrow} v$, $n\to\infty$. In
particular, since $b<v$, $\mathsf{P}_n (S_n>bn)\to 1$,
$n\to\infty$. Consequently, in view of \eqref{eq:8.4}, we get
$\liminf\limits_{n\to\infty} l_n\geq 2$.
\end{proof}

\section{Hypothesis}\label{sec:8}

Theorems~\ref{th:4} and~\ref{th:5} can be also probed in the case,
when we change there a firm carcass map by a carcass map $g$, all
whose kinks are $g$-rational, i.e. the $g$-expansion of every kink
of $g$ is either $g$-finite, or is a periodical sequence of
numbers from the set $\{0;\, 1\}$.

We think that Theorems~\ref{th:4} and~\ref{th:5} are not true for
carcass maps in general, i.e. some additional assumptions about
the kinks of maps are necessary.

\setlength{\unitlength}{1pt}

\pagestyle{empty}
\bibliography{Ds-Bib}{}
\bibliographystyle{makar}

\newpage
\tableofcontents

\end{document}